%% file: main.tex
\documentclass[11pt,reqno]{amsart}

\usepackage{lmodern}
\usepackage[T1]{fontenc}
\usepackage[utf8]{inputenc}
\usepackage[english]{babel}
\usepackage{microtype}

\usepackage{amsmath, amssymb, amsfonts, amsthm, mathtools}
\allowdisplaybreaks              % long align environments may split across pages
\numberwithin{equation}{section}

\usepackage{graphicx}
\graphicspath{{./images/}}
\usepackage{xcolor}
\usepackage{comment}

\theoremstyle{plain}
\newtheorem{theorem}{Theorem}[section]
\newtheorem{corollary}[theorem]{Corollary}
\newtheorem{lemma}[theorem]{Lemma}
\newtheorem{prop}[theorem]{Proposition}

\theoremstyle{definition}
\newtheorem{definition}[theorem]{Definition}

\theoremstyle{remark}
\newtheorem{rem}[theorem]{Remark}

\usepackage[
  colorlinks=true,
  linkcolor=blue!60!black,
  citecolor=green!50!black,
  urlcolor=blue!60!black,
  breaklinks=true
]{hyperref}
\usepackage[capitalise,nameinlink]{cleveref}   

\title[Mixing for Blaschke Products]{Mixing for Free Semigroup Actions of Blaschke Products on the Circle}

\author{Ekaterina Shchetka}
\address{Department of Mathematics, Rice University, Houston, TX 77004, USA}
\email{katia.shchetka@rice.edu}

\keywords{Decay of correlations, mixing at double exponential rate, Blaschke products,
          free semigroup actions, composition operator,
          Pollicott--Ruelle resonances, quasi-nilpotent operators}

\thanks{The author was partially supported by NSF grant DMS 2404309}
\makeatletter
\renewcommand{\MR}[1]{}
\makeatother

\begin{document}

\begin{abstract}
We study mixing at a double exponential rate on analytic observables and ask
how much of a map is remembered by its rate of mixing. For finite Blaschke products of the circle with a fixed point in the unit disk, and for the free
semigroup actions they generate, we give a complete classification: the rate of
mixing (no mixing, exponential, or double exponential) is determined by the
multiplier of the generators at that fixed point, the invariant measure being
the harmonic measure with a pole there. 

In the double exponential regime, the
exponent equals $\log p$, where $p$ is the minimal local degree of the generators at the fixed point, and
we show that this value is sharp. Consequently, the rate is not rigid: it is not
stable under $C^1$-perturbations and does not imply $C^1$-conjugacy to affine
models. Rigidity holds when the exponent is maximal for the degree: a map of degree
$q$ whose exponent attains $\log q$ is M\"obius conjugate to an affine model.
\end{abstract}

\maketitle

\tableofcontents

%==============================================================================

\input{intro_arxiv}
\input{hyperfunctions_final}

\input{quasi-nilpotent}
\input{harmonic_fixed}

%==============================================================================
%  Appendices
%==============================================================================
\appendix
\input{Invariance_of_Lebesgue}
\input{some_theta_any_theta}
\input{Mixing_Hierarchy}
\input{no_uniform_mixing_L_2}

%==============================================================================
%  Bibliography
%==============================================================================
\bibliographystyle{amsalpha}
\bibliography{Katia}

% biblatex users: replace the two lines above with
%\printbibliography
\end{document}

%% file: intro_arxiv.tex
\section{Introduction}\label{sec:intro}
 
Mixing states that correlations between observables decay under the action of a semigroup (or a group) of measure-preserving transformations.
The rate of mixing is not a property of the system alone: it depends on the class of
observables against which it is measured, and it improves as that class is
narrowed. On square-integrable observables there is no uniform rate (Appendix~\ref{no-uniform-mixing}). On H\"older observables, the classical result states that for uniformly expanding maps of
the circle, correlations decay exponentially. This goes back to Ruelle, Sinai, and Bowen \cite{Rue68, Sin72, Bow75}. The mechanism underlying this comes from a spectral gap for the
transfer operator on a suitable Banach space, and the rate is defined by the modulus of its
second eigenvalue \cite{Bal00}.
 
On analytic observables the transfer operator becomes compact, and its spectrum
is a discrete set of eigenvalues, the \textit{Pollicott--Ruelle resonances},
which again determine the rate. For analytic expanding maps of the circle these spectra are described in \cite{BJS17} and \cite{BN19}; they can also be estimated by rigorous numerics \cite{BNTC25}. This suggests looking at the extreme case, when
the resonance spectrum is trivial: restricted to the observables of zero mean the operator is \textit{quasi-nilpotent}, its
spectral radius equals zero, and no exponential bound is optimal. Such systems
exist, and the simplest one is the Bernoulli doubling map $\phi(z)=z^2$. On the space of analytic observables, the correlations decay \textit{double exponentially} in $n$ (see
Section~\ref{motivating example}). In the mixing hierarchy, mixing at a double exponential rate on analytic observables implies exponential mixing on H\"older observables (Appendix~\ref{hierarchy}). Double exponential rates of decay are closely related to \textit{dissipation time} under stochastic perturbations in \cite{FNW04},
\cite{FI19}, and \cite{T25}.

In the regime of double exponential decay the resonances carry no information: apart from the trivial
eigenvalue $1$, carried by the constant observables, they are all
equal to zero. What survives is the \textit{rate}
itself. This is the question we address.
 
\vspace{3mm}
\textbf{Question.} How much of a map is remembered by its rate of mixing on analytic observables?
\vspace{3mm}
 
We answer it completely for finite Blaschke products of the circle with a fixed
point in the unit disk, and for the free semigroup actions they generate. \textit{Qualitatively}, the rate remembers exactly the
multiplier of the generators at that fixed point: whether it has modulus one,
modulus strictly between zero and one, or is zero (Theorem~\ref{classification}).
A global property of the dynamics on the circle is thus determined by the local
behavior at an interior fixed point. \textit{Quantitatively}, in the double
exponential regime the exponent equals $\gamma=\log p$, where $p$ is the local
degree at that fixed point, and this value is sharp
(Theorem~\ref{thm:sharpness}). Consequently, the rate cannot remember more: for each degree $q$
there are many maps with $p=2$, and they are not even $C^1$-conjugate to the
affine (in the angular
coordinate) model of their degree, so the rate is not rigid
(Section~\ref{examples}). The exponent does determine the map once the degree is
fixed as well: a map of degree $q$ whose exponent attains the maximal possible
value $\gamma=\log q$ is M\"obius conjugate to the affine model
$z\mapsto e^{i\psi}z^{q}$
(Corollary~\ref{cor:rigidity fixed degree}).
 
We work with free semigroup actions rather than with a single map because the
natural object here is a family of maps applied in an arbitrary order, as in
random dynamics and iterated function systems; the word length $|a|$ then plays
the role of time $n$. Several generators also bring in features that a single map cannot show: the invariant
replacing the spectral radius is the \textit{joint} spectral radius of
$\mathcal{L}_1,\dots,\mathcal{L}_N$, which we compute (Theorem~\ref{prop:jsr}), and in the
double exponential regime the generators span a non-unital algebra of compact
quasi-nilpotent operators, which need not be commutative
(Section~\ref{spec-qnilp-res}).
 
\subsection{Setting}\label{sec:setting}

Let $\mathbb{S}^1=\{z\in\mathbb{C} : |z|=1\}$ denote the unit circle, and let $\mu$ denote the normalized Lebesgue measure on $\mathbb{S}^1$, given in the
complex coordinate by
\begin{equation*}
d\mu(z):=\frac{1}{2\pi i}\frac{dz}{z} .
\end{equation*}
Let $\phi_1, \dots, \phi_N$
be non-constant analytic functions on the open unit disk that admit continuous
extensions to the closed unit disk and map the unit circle to itself, that is,
\[
|\phi_j(z)|=1 \quad \text{for} \quad |z|=1, \quad1\le j\le N.
\]
By a theorem of Fatou \cite{Fat23}, these are exactly the finite \textit{Blaschke products}
 \[
\mathcal{B}:=\left\{ e^{i\psi}\prod_{i=1}^q\frac{z-\lambda_i}{1-\overline{\lambda_i}z}\,:\, \psi\in[0,2\pi),|\lambda_i|<1,  i=1,\dots,q, \,\text{ for }\, q\in\mathbb{Z}_{\ge1}\right\}.
 \]
These have degree $q\ge1$. For $q=1$ one gets the
automorphisms of $\mathbb{D}$. Those of degree at least two with a fixed point in $\mathbb{D}$ are
uniformly expanding maps of the circle (see \cite{Mar83}), hence exponentially
mixing on H\"older observables \cite{Rue68}. For a fixed $q\ge 2$, McMullen studied the
topological dynamics of these maps in \cite{McM10}. Further statistical properties of the boundary dynamics, including
central limit theorems for iterates of a single inner function are studied by Ivrii and
Urba\'nski in \cite{IU23}.

Throughout we assume that $\phi_1,\dots,\phi_N$ have a \textit{common fixed
point} $w$ in the open unit disk $\mathbb{D}$, and we let
\begin{equation}\label{eq:harmonic measure}
d\nu_w(z):=\frac{1-|w|^{2}}{|z-w|^{2}}\,d\mu(z)
\end{equation}
be the harmonic measure on $\mathbb{S}^1$ with pole at $w$. By
Corollary~\ref{cor:harmonic measure preserving},
$\phi_j$ preserves $\nu_w$ if and only if $\phi_j(w)=w$. For $w=0$ we have
$\nu_0=\mu$, and the condition is $\phi_j(0)=0$ (Lemma~\ref{lm:Leb meas preserv}). We write
 \[
 \mathcal{B}^0:=\{\phi\in\mathcal{B}: \phi(0)=0\}
 \]
for the maps normalized in this way, so that $\mathcal{B}^0$ is exactly the class
of Lebesgue measure preserving finite Blaschke products.

The collection of functions $\phi_j$, $j=1,\dots,N$, generates an action of the
free semigroup $\mathbb{F}_N^+$ on the circle defined by
\begin{align*}
\tau(a) z=\phi_{i_n}\circ \phi_{i_{n-1}}\circ \dots \circ \phi_{i_1} (z), \quad a=i_1i_2\dots i_n \in \mathbb{F}_N^+, \quad z\in\mathbb{S}^1,
\end{align*}
so that $\tau(ab)=\tau(b)\circ\tau(a)$.

In \cite{Pommerenke}, Pommerenke proved that if none of the $\phi_j$ are
rotations, then $\tau$ is \textit{(strongly) mixing}; that is, for any
$ f,g\in L_2(\mathbb{S}^1, \nu_w)$, the \textit{correlation function}
\[
\mathcal{C}_{f,g}(a):=\left|\int_{\mathbb{S}^1}(f\circ\tau(a))\,\overline{g}\,d\nu_w -\int_{\mathbb{S}^1}f\,d\nu_w \int_{\mathbb{S}^1}\overline{g}\,d\nu_w  \right| \to 0 \quad \text{as} \quad |a|\to\infty,
\]
where $|a|$ denotes the word length of $a\in \mathbb{F}_N^+$.

We study the \textit{quantitative mixing} for the action $\tau$ on the spaces of
analytic functions $\mathcal{H}_\theta$.

\begin{definition}[Analytic functions]\label{analytic space} Let $0<\theta<1$.
 The space of analytic functions $\mathcal{H}_\theta$  is defined as the closure of all linear combinations of $z^j, j\in \mathbb{Z}$, with respect to the norm
 \[
 \|f\|_\theta:=\sqrt{\sum_{k\in\mathbb{Z}} |f_k|^2\theta^{-2|k|}},
 \]
 where $f_k$ are the Fourier coefficients of $f$.
 We denote by $\mathcal{H}_\theta^0$ the subspace of $\mathcal{H}_\theta$ orthogonal to constants, that is, those with $f_0=0$.
\end{definition}

Since the density of $\nu_w$ is bounded above and below on $\mathbb{S}^1$, we
have $L_2(\mathbb{S}^1,\nu_w)=L_2(\mathbb{S}^1,\mu)$ with equivalent norms, and
$\mathcal{H}_\theta$ is contained in both. 

\begin{definition}[Mixing at a double exponential rate]\label{definition of double exponential mixing}
  An action  $\tau$ is said to be mixing at a double exponential rate if for some $0<\theta<1$ there exist $\beta>0$, $\gamma>0$, and $C>0$ such that for any $f, g\in \mathcal{H}_\theta$
and any $a\in \mathbb{F}_N^+$
\[
\mathcal{C}_{f,g}(a) \le C{e^{-\beta{e^{\gamma|a|}}}}\|f\|_\theta\|g\|_\theta.
\]
\end{definition}
By Lemma~\ref{lm:theta independence}, Definition~\ref{definition of double exponential mixing} does not depend on $\theta$: if it holds for one, it holds for every $0<\theta<1$, with the same double exponential rate $\gamma$.

Let $C^\alpha$ denote the space of H\"older continuous functions with $\alpha\in(0,1]$.
\begin{definition}[Exponential mixing]\label{definition of exponential mixing}
An action $\tau$ is said to be exponentially mixing on $C^\alpha$ if there exist $\kappa>0$ and $C>0$ such that for any $f,g\in C^\alpha$ and any $a\in \mathbb{F}_N^+$
\[
\mathcal{C}_{f,g}(a) \le Ce^{-\kappa|a|}\|f\|_{C^{\alpha}}\|g\|_{C^{\alpha}}.
\]
\end{definition}

\subsection{Results}\label{ch1:results}

Below, we provide a complete classification of the mixing rate for the action
$\tau$, distinguishing between \textit{exponential} and \textit{double
exponential} behavior. The invariant is the multiplier $\phi_j'(w)$ at the
common fixed point.

\begin{theorem}\label{classification} Let $\tau$ be as above. Then
 \begin{enumerate}
     \item $\tau$ is not mixing if and only if $|\phi_j'(w)|=1$ for some $ 1\le j\le N$,
    \item $\tau$ is exponentially mixing if and only if  $|\phi_j'(w)|<1$ for all $ 1\le j\le N$,
    \item $\tau$ is mixing at a double exponential rate if and only if $\phi_j'(w)=0$ for all $ 1\le j\le N$.
\end{enumerate}
\end{theorem}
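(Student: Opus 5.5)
First, conjugate everything to $w=0$. The Möbius automorphism $M(z)=\frac{z-w}{1-\overline{w}z}$ preserves $\mathbb{S}^1$, pushes $\nu_w$ to $\mu$, and maps $\mathcal{H}_\theta$ boundedly into some $\mathcal{H}_{\theta'}$ (an analytic function on an annulus stays analytic on a slightly thinner annulus). By Lemma~\ref{lm:theta independence}, the double exponential notion does not depend on $\theta$, and exponential mixing on $C^\alpha$ is conjugation invariant. The multipliers $\phi_j'(w)$ are also conjugation invariant. So we may assume $\phi_j\in\mathcal{B}^0$ and $\nu_w=\mu$. The basic tool is the Koopman operator $U_j f=f\circ\phi_j$, which is an isometry of $L_2(\mu)$ by Lemma~\ref{lm:Leb meas preserv}, together with its adjoint, the transfer operator $\mathcal{L}_j$. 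We then have $\mathcal{C}_{f,g}(a)=|\langle f\circ\tau(a),g\rangle|$ for mean-zero $f$. By splitting $f,g$ into analytic (Hardy) and antianalytic parts, and using that $f\mapsto f\circ\phi$ preserves $H^2$ and its conjugate, we reduce to $f,g\in H^2_0$ (the other parts are symmetric, and the cross terms vanish or reduce similarly).

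\emph{Part (1).} If $|\phi_j'(0)|=1$, Schwarz's lemma gives $\phi_j(z)=e^{i\psi}z$. Then $\langle z\circ\phi_j^n,z\rangle=e^{in\psi}$ does not tend to $0$ along the word $a=j^n$, so $\tau$ is not mixing. Conversely, if no generator is a rotation, Pommerenke's theorem gives mixing. So (1) holds, and it also yields the "only if" directions of (2) and (3).

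\emph{Part (2).} Suppose $\lambda=\max_j|\phi_j'(0)|<1$. Write $\phi_j(z)=z h_j(z)$ with $|h_j|\le1$ on $\mathbb{D}$ and $|h_j(0)|\le\lambda$. For $f=\sum_{k\ge1}f_kz^k$, the coefficient of $z^m$ in $f\circ\tau(a)$ comes only from terms with $k\le m$, because $\tau(a)(z)=z\,H_a(z)$. Moreover, the coefficient of $z^k$ in $\tau(a)^k$ is $\prod h_{i}(0)^k$, and more generally for fixed $m$ the $m$-th coefficient of $\tau(a)^k$ is bounded by a polynomial in $|a|$ times $\lambda^{|a|}$. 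The cleanest route is to show that the transfer operators $\mathcal{L}_j$ restricted to $\mathcal{H}^0_\theta$ have joint spectral radius at most $\lambda<1$; this is Theorem~\ref{prop:jsr}, which I take as available. With it, for analytic data we get $\mathcal{C}_{f,g}(a)\le C\rho^{|a|}$ for any $\rho\in(\lambda,1)$. To pass to $C^\alpha$, I would use the uniform expansion of degree $\ge2$ maps (Martin). Alternatively, I would use a standard smoothing argument: approximate $f\in C^\alpha$ by its Poisson or Fejér mean at scale $\epsilon$, incurring error $\epsilon^\alpha$ in $L_2$ (the Koopman operator is an isometry, so this error is not amplified). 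The analytic part has norm $e^{C/\epsilon}$-type in $\mathcal{H}_\theta$, but that is too large. So I would instead use the Hardy-space version of the bound: the $m$-th Fourier coefficient of $f\circ\tau(a)$ is controlled by $\sum_{k\le m}|f_k|\,m\,\rho^{|a|}$. Pairing with $g$ and truncating at frequency $m\sim e^{c|a|}$ gives an exponential bound. For the converse, if some $|\phi_j'(0)|=1$ there is no mixing by (1).

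\emph{Part (3).} If all $\phi_j'(0)=0$, then each $\phi_j(z)=z^{p_j}h_j(z)$ with $p_j\ge2$. Hence $\tau(a)(z)=z^{P}H(z)$ with $P\ge 2^{|a|}$, and so $f\circ\tau(a)$ for $f\in H^2_0$ has no Fourier coefficients below $2^{|a|}$. Cauchy–Schwarz then gives $|\langle f\circ\tau(a),g\rangle|\le\|f\|_{L_2}\bigl(\sum_{m\ge2^{|a|}}|g_m|^2\bigr)^{1/2}\le\theta^{2^{|a|}}\|f\|_\theta\|g\|_\theta$. This is a double exponential rate with $\gamma=\log2$. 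For the converse, suppose some $\phi_j'(0)=c\ne0$ with $|c|<1$. Then along $a=j^n$ we have $\langle z\circ\phi_j^n,z\rangle=c^n$, which decays only exponentially, so the rate cannot be double exponential.

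The main obstacle is part (2). The exponential bound on $C^\alpha$ for a free semigroup, uniformly over all words, needs the joint-spectral-radius bound (or the Fourier-coefficient estimate with polynomial-in-$m$ loss). The converse direction of (2), i.e. that exponential mixing forces $|\phi_j'(w)|<1$ for all $j$, follows from (1).
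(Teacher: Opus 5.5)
Your normalization, Part~1 (Schwarz lemma for the rotation case, Pommerenke for the converse), the `only if' directions, and the lower bound $|\langle z\circ\phi_j^{n},z\rangle|=|c|^{n}$ all match the paper (Section~\ref{Other invariant measures} and Lemma~\ref{lower bounds on exponential mixing}). For $C^\alpha$ in Part~2, the paper likewise just cites uniform expansion plus the classical theory.

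The genuine difference is the upper bound in Part~3. The paper bounds the operator norm of $\mathcal{L}^a$ on the hyperfunction space $\mathcal{H}^0_{\theta^{-1}}$ by its Hilbert--Schmidt norm. It discards the matrix entries with $j<p^{|a|}k$ via Lemma~\ref{vanishing coefficients} and sums two geometric series, getting $\|\mathcal{L}^a\|\le\frac{\sqrt2}{\theta(1-\theta^2)}\,\theta^{p^{|a|}}$. You use the same vanishing of low modes, but combine it with the $L_2$-isometry and Cauchy--Schwarz against the high-frequency tail of $g$. This gives $\mathcal{C}_{f,g}(a)\le\theta^{P}\|f\|_{L_2}\|g\|_\theta$ with $P\ge p^{|a|}$; write $p^{|a|}$ rather than $2^{|a|}$ if you want the exponent $\log p$.

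Your route is shorter, has constant $1$, and needs no matrix computation. The paper's bound is stronger, since $\|f\|_{\theta^{-1}}\le\|f\|_{L_2}$ lets $f$ be a hyperfunction. More importantly, it is an operator-norm bound on a space where $\mathcal{L}^a$ is Hilbert--Schmidt. That is what later yields compactness, the spectrum in Theorem~\ref{thm:spectrum}, and quasi-nilpotency in Theorem~\ref{prop:jsr}.

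In Part~2, be careful about what actually carries the $C^\alpha$ claim. Theorem~\ref{prop:jsr} concerns the Koopman operators on $\mathcal{H}^0_{\theta^{-1}}$; your transfer operators on $\mathcal{H}^0_\theta$ are their adjoints, so the norms agree. It only gives exponential decay for analytic observables, and Theorem~\ref{norm bound}(2) already gives that without Berger--Wang. As you noticed, this does not pass to $C^\alpha$: the truncation cost $\theta^{-2M}$ against a gain $e^{-\kappa|a|}$ forces $M\lesssim|a|$ and leaves an error of order $|a|^{-\alpha}$.

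Your substitute bound $|(f\circ\tau(a))_m|\lesssim m\,\rho^{|a|}\sum_{k\le m}|f_k|$ is asserted, not derived. Corollary~\ref{corollary from Schwarz lemma} gives a loss $r^{-m}$, and pushing $r\to1$ to make it polynomial destroys the contraction, since $\lambda(r)\to1$ for Blaschke products. That estimate is essentially the uniform $C^1$ spectral gap of the transfer operators, i.e.\ the classical expanding-map theory itself.

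So, contrary to your closing remark, Part~2 on $C^\alpha$ does not rest on the joint spectral radius. It rests on $|\phi_j'|\ge c>1$ on $\mathbb{S}^1$ together with Ruelle-type (Lasota--Yorke or cone) estimates uniform over the finitely many generators, exactly as in the paper.
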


Note that, by the Schwarz lemma, $|\phi_j'(w)|$ cannot be greater than $1$, and
equals $1$ precisely when $\phi_j$ is an automorphism of $\mathbb{D}$, that is,
an elliptic rotation about $w$. Thus, the three cases exhaust all possibilities.

In the normalized case when $w=0$ the constants $\gamma$, $\beta$, and $C$ in
Definitions~\ref{definition of double exponential mixing} and
\ref{definition of exponential mixing} are computed explicitly (see equations
\eqref{constants for double exponential mixing} and
\eqref{constants for exponential mixing}); in particular $\beta=-\log \theta$
and $\gamma=\log p$, where $p\ge2$ is the minimal local degree of the generators
at the fixed point. The exponent $\gamma$ is sharp, that is it cannot be replaced by any larger one (see Theorem~\ref{thm:sharpness}).

In the case $N=1$, we obtain the classification for a single Blaschke product.

\begin{corollary}\label{intro:classification on S1}
Let $\phi$ be a finite Blaschke product with a fixed point $w\in\mathbb{D}$, and
let $\nu_w$ be the harmonic measure with pole at $w$. Then
\begin{itemize}
    \item $\phi$ is not mixing if and only if $|\phi'(w)|=1$,
    \item $\phi$ is exponentially mixing if and only if $|\phi'(w)|<1$,
    \item $\phi$ is mixing at a double exponential rate if and only if $\phi'(w)=0$.
\end{itemize}
\end{corollary}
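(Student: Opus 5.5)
The corollary is the case $N=1$ of Theorem~\ref{classification}, so the plan is to specialize rather than reprove. With a single generator $\phi_1=\phi$, the free semigroup $\mathbb{F}_1^+$ is the additive semigroup $\mathbb{Z}_{\ge0}$. A word $a=1\,1\cdots1$ of length $n$ acts by $\tau(a)=\phi^{\circ n}$ and has $|a|=n$. The correlation function $\mathcal{C}_{f,g}(a)$ is then the usual correlation $\bigl|\int (f\circ\phi^n)\,\overline{g}\,d\nu_w-\int f\,d\nu_w\int\overline g\,d\nu_w\bigr|$.

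First I will check that the standing hypotheses hold. Here $\phi\in\mathcal{B}$ fixes $w\in\mathbb{D}$, so by Corollary~\ref{cor:harmonic measure preserving} it preserves $\nu_w$, and the correlations are the ones defined in the setting. Next I will note that, under this identification, the notions of mixing, exponential mixing (Definition~\ref{definition of exponential mixing}) and double exponential mixing (Definition~\ref{definition of double exponential mixing}) for $\tau$ become the standard single-map notions for $\phi$, with time $n$ in place of $|a|$. Finally, each condition of the form ``for some $1\le j\le N$'' or ``for all $1\le j\le N$'' reduces, when $N=1$, to the single condition on $\phi'(w)$. Items (1)--(3) of Theorem~\ref{classification} then become the three bullets verbatim.

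I expect no real obstacle here. The only point needing care is that the trichotomy is exhaustive. By the Schwarz lemma applied after conjugating $w$ to $0$ by a disk automorphism, we have $|\phi'(w)|\le1$, with equality exactly when $\phi$ is an elliptic Möbius map fixing $w$. The case $\phi'(w)=0$ lies inside the case $|\phi'(w)|<1$, which is consistent with double exponential mixing implying exponential mixing, as in the mixing hierarchy of Appendix~\ref{hierarchy}.
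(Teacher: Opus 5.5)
Your proposal is correct and matches the paper, which obtains this corollary simply as the case $N=1$ of Theorem~\ref{classification}: words of length $n$ act as $\phi^{\circ n}$, and the ``for some $j$'' and ``for all $j$'' conditions collapse to the single condition on $\phi'(w)$. Your remark on exhaustiveness via the Schwarz lemma (after moving $w$ to $0$) also matches the comment the paper makes right after Theorem~\ref{classification}.
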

Equivalently: $\phi$ fails to be mixing exactly when it is an elliptic
automorphism; it mixes exponentially exactly when its fixed point in the disk is
attracting but not critical; and it mixes at a double exponential rate exactly
when that fixed point is critical.

In complex dynamics, $w$ is called a \textit{neutral} (or
\textit{indifferent}) fixed point if $|\phi'(w)|=1$, \textit{attracting} if
$|\phi'(w)|<1$, and \textit{super-attracting} if $\phi'(w)=0$. Therefore, we see
that mixing at a double exponential rate on the circle, a global dynamical
property, is determined by the local behavior of $\phi$ at its fixed point
inside the unit disk.

\subsection{Ideas of Proof}\label{sec:ideas}
All the proofs are carried out in the normalized case $w=0$, where $\nu_w=\mu$
and $\phi_j\in\mathcal{B}^0$. Conjugating by the M\"obius transformation
$h(z)=\frac{z+w}{1+\overline{w}z}$ reduces the general case to this one. Since it
preserves the multipliers and the local degrees at the fixed point, the rate
$\gamma$ is preserved. The details are in
Section~\ref{Other invariant measures}.
 
The first part of Theorem~\ref{classification} follows from the Schwarz lemma,
since $|\phi_j'(0)|=1$ is equivalent to $\phi_j$ being a rotation, and rotations
are never mixing (see Section~\ref{rotations}). The second part follows from the
classical theory of uniformly expanding maps \cite{Rue68}, since
$|\phi_j'(0)|<1$ implies that $\phi_j$ is an analytic, uniformly expanding map
of the circle \cite{Mar83}. Note that because the generator family is finite, the expansion constants may be chosen uniformly in $j$. In Section~\ref{Exponential mixing via complex analysis} we give
an alternative proof for exponential mixing on analytic spaces based on an application of the Schwarz lemma.
 
In the proof of the third part, we use a \textit{functional analysis approach}, studying
properties of the operator $\mathcal{L}$ on the space of \textit{hyperfunctions}, which is the dual space of
analytic functions. In fact the upper bounds we prove are stronger than the
classification requires: they give the same rate of decay already when only one
of the two observables is analytic and the other is allowed to be a
hyperfunction. Restricting both observables to the analytic space gives the
statements above. Rather than estimating $\|\mathcal{L}\|$ and iterating the
estimate (an approach typical for obtaining exponential decay), we estimate
$\|\mathcal{L}^n\|$ directly. We prove that $\|\mathcal{L}^n\|$ decays double
exponentially in $n$ (see Theorem~\ref{norm bound}) and provide explicit
estimates for $C,\beta$, and $\gamma$. As a consequence, we show that when
restricted to the subspace orthogonal to constants, $\mathcal{L}$ is a compact,
\textit{quasi-nilpotent operator} (that is, its spectral radius is equal to
zero, see Theorem~\ref{prop:jsr}). In other words, the resonance
spectrum is trivial. For $N>1$ we prove more: every noncommutative polynomial %with zero constant term
in
$\mathcal{L}_1,\dots,\mathcal{L}_N$ is quasi-nilpotent, so the generators span a
quasi-nilpotent non-unital algebra (Theorem~\ref{prop:jsr}).
This does not follow from the quasi-nilpotency of the individual generators,
since a composition of quasi-nilpotent operators need not be quasi-nilpotent.
 
One feature of our approach is that, when we prove that a map does \textit{not}
mix at a double exponential rate, we provide an \textit{explicit lower bound} on
the exponential decay rate (see Lemma~\ref{lower bounds on exponential mixing}). The
same technique gives the sharpness of the exponent $\gamma=\log p$, and with it
the rigidity statement in fixed degree.
 
\subsection{Organization of the Paper}
Section~\ref{motivating example} discusses the model case of the Bernoulli
doubling map completely, including the spectral picture on the space of
hyperfunctions. Sections~\ref{proof} and \ref{one-more-proof} prove
Theorem~\ref{classification} in the normalized case via the norm bounds of
Theorem~\ref{norm bound}. Section~\ref{sharp-and-rigid} provides sharpness of the exponent and the rigidity statement in fixed degree.  Section~\ref{examples} shows that the maps mixing at a
double exponential rate are nowhere dense in the $C^1$ topology and are not
$C^1$-conjugate to affine models. Section~\ref{spec-qnilp-res} covers the joint spectral radius. Section~\ref{Other invariant measures}
carries out the reduction of harmonic measure case to the Lebesgue measure case.
Appendix~\ref{app:Lebesgue measure preserving} proves the invariance criterion
for the harmonic measure; Appendix~\ref{some theta any theta} shows that the
definition of mixing at a double exponential rate does not depend on $\theta$;
Appendix~\ref{hierarchy} proves the implication from double exponential to
exponential mixing; and Appendix~\ref{no-uniform-mixing} shows that no uniform 
quantitative rate is available on $L_2$.

%% file: hyperfunctions_final.tex
\section*{Acknowledgements}
The author would like to thank Alex Blumenthal, Jon Chaika, David Fisher,  Boris Hasselblatt, Asaf Katz, Osama Khalil, Roland Roeder, Julia Slipantschuk, Ralf Spatzier, Kurt Vinhage, Amie Wilkinson, and Maciej Zworski for their insightful questions, helpful discussions, and comments on this work.

\section{Motivating Example}\label{motivating example}

\begin{prop}\label{multiplication-by-2-is-DEM}
The Bernoulli doubling map $\phi : \mathbb{S}^1 \to \mathbb{S}^1$
\[
   \phi(z) = z^2
\]   is mixing at a double exponential rate on $\mathcal{H}_\theta$, i.e., for any $f,g\in\mathcal{H}_\theta$ and any $n\ge0$
\begin{equation}\label{DEM-for-Bernoulli}
\mathcal{C}_{f,g}(n)\le Ce^{-\beta e^{\gamma n}}\|f\|_\theta\|g\|_{\theta}
\end{equation}
with $C=\theta$ , $\beta=-\log \theta$, and $\gamma=\log 2$.
Moreover, the rate of mixing is sharp, i.e., there exist $f,g\in\mathcal{H}_\theta$ such that for some $C'>0$ and $\beta'>0$, and any $n\ge 0$
\[
\mathcal{C}_{f,g}(n)\ge C'e^{-\beta'e^{\gamma n}}\|f\|_\theta\|g\|_\theta.
\]
\end{prop}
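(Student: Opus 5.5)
The plan is a direct Fourier computation, since composition with $z^2$ acts on Fourier coefficients by a very simple rule. Write $f=\sum_k f_k z^k$ and $g=\sum_k g_k z^k$. Then $f\circ\phi^n(z)=\sum_k f_k z^{2^n k}$. By orthonormality of $\{z^j\}$ in $L_2(\mu)$,
\[
\int_{\mathbb{S}^1} (f\circ\phi^n)\,\overline{g}\,d\mu=\sum_{k} f_k\,\overline{g_{2^n k}} .
\]
The $k=0$ term is $f_0\overline{g_0}=\int f\,d\mu\int\overline g\,d\mu$, so it cancels in the correlation. Hence
\[
\mathcal{C}_{f,g}(n)=\Bigl|\sum_{k\neq0} f_k\,\overline{g_{2^nk}}\Bigr| .
\]

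For the upper bound, insert the weights $\theta^{-|k|}\theta^{|k|}$ and $\theta^{-2^n|k|}\theta^{2^n|k|}$ and apply Cauchy--Schwarz:
\[
\mathcal{C}_{f,g}(n)\le \sup_{k\ne0}\theta^{|k|+2^n|k|}\,\Bigl(\sum_{k\neq 0}|f_k|^2\theta^{-2|k|}\Bigr)^{1/2}\Bigl(\sum_{k\neq0}|g_{2^nk}|^2\theta^{-2\cdot 2^n|k|}\Bigr)^{1/2}.
\]
The supremum is attained at $|k|=1$ and equals $\theta\cdot\theta^{2^n}$. The two sums are bounded by $\|f\|_\theta^2$ and $\|g\|_\theta^2$, the second because it is a subseries of the series defining $\|g\|_\theta^2$. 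Since $\theta^{2^n}=e^{-\beta e^{\gamma n}}$ with $\beta=-\log\theta$ and $\gamma=\log 2$, this gives \eqref{DEM-for-Bernoulli} with $C=\theta$.

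For sharpness, take test functions concentrated at the lowest frequency: $f(z)=z$ and $g=\sum_{j\ge1}\theta'^{\,j}z^j$ for some $\theta'<\theta$, so that $g\in\mathcal{H}_\theta$. Then
\[
\mathcal{C}_{f,g}(n)=|g_{2^n}|=\theta'^{\,2^n}=e^{-\beta' e^{\gamma n}},\qquad \beta'=-\log\theta'.
\]
The norms $\|f\|_\theta=\theta^{-1}$ and $\|g\|_\theta<\infty$ are fixed constants, so they can be absorbed into $C'$, which gives the claimed lower bound with the same $\gamma=\log 2$. A simpler choice, $g=z^{2^m}$ for a single $m$, would only control one time $n$. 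The geometric series is what makes one pair $(f,g)$ work for all $n$ simultaneously.

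There is no real obstacle here. The only points needing care are checking that the $k=0$ term is exactly the product of means, and choosing $g$ with geometric coefficients so that a single fixed pair gives the lower bound for every $n$.
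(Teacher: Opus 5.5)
Your proposal is correct and follows the paper's proof essentially step for step. Both use the Fourier identity $\mathcal{C}_{f,g}(n)=\bigl|\sum_{k\neq0}f_k\overline{g_{2^nk}}\bigr|$ with the weighted Cauchy--Schwarz bound giving $\theta^{2^n+1}$, and both take $f(z)=z$ against a slowly decaying $g$ for sharpness; the only difference is cosmetic: the paper takes $g_k=\theta^{|k|}/|k|$, gets $\theta^{2^n}/2^n$, and absorbs the factor $2^{-n}$ into some $\beta'>\beta$, whereas your geometric choice $g_j=\theta'^{\,j}$ with $\theta'<\theta$ gives exactly $\theta'^{\,2^n}$ with $\beta'=-\log\theta'$.
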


\begin{proof}
In the Fourier basis $f(z) = \sum_{k\in\mathbb{Z}} f_k z^k$,
\[
   (f\circ\phi^n)(z) =f(z^{2^n})= \sum_{k\in\mathbb{Z}} f_k\, z^{2^nk}.
\]  
Now, let us compute the correlation function explicitly. Without loss of generality, we can assume that $f_0=0$, then for $f, g\in\mathcal{H}_\theta$, we compute the correlation function
\begin{align*}
\mathcal{C}_{f,g}(n)&=\left|\int_{\mathbb{S}^1}(f\circ\phi^n)\,\overline{g}\,d\mu \right| \\ &=\left| \sum_{0\neq m\in\mathbb{Z}}(f\circ \phi^n)_m\,\overline{g_m}\right| \\ &=\left| \sum_{0\neq k\in\mathbb{Z}}f_k\,\overline{g_{2^nk}}\right|.
\end{align*}
Next, we multiply and divide each term in the sum by $\theta^{(2^n+1)|k|}$ and use the Cauchy-Schwarz inequality
\begin{align*}
\mathcal{C}_{f,g}(n)&=\left| \sum_{0\neq k\in\mathbb{Z}}\theta^{(2^n+1)|k|}f_k\, \theta^{-|k|}\, \overline{g_{2^nk}}\, \theta^{-2^n|k|}\right| \\ &\le  \sup_{0\neq k\in\mathbb{Z}}\left(\theta^{(2^n+1)|k|}\right) \cdot\,\sqrt{\sum_{0\neq k\in\mathbb{Z}}|f_k|^2\theta^{-2|k|}}\cdot \sqrt{\sum_{0\neq k\in\mathbb{Z}}|g_{2^nk}|^2\theta^{-2\cdot2^n|k|}} \\ &\le\theta^{(2^n+1)}\|f\|_\theta\|g\|_\theta.
\end{align*}
Thus, we prove \eqref{DEM-for-Bernoulli} with \[C=\theta,\quad \beta=-\log \theta, \quad \gamma=\log 2.\]

Finally, we prove the sharpness of the rate. Let \[f(z)=z \quad\text{and}\quad g(z)=\sum_{0\neq k\in\mathbb{Z}}\frac{1}{|k|}\theta^{|k|}z^k.\] Note that 
\[
\|f\|_\theta=\theta^{-1} \quad \text{and}\quad \|g\|_{\theta}=\sqrt{2\sum_{k\ge1}{\frac{1}{k^2}}}=\frac{\pi}{\sqrt{3}}.
\]
For this choice of $f$ and $g$ the correlation function
\begin{align}\label{one character survives}
\mathcal{C}_{f,g}(n)&=\overline{g_{2^n}}\\&=\frac{\theta^{2^n}}{2^n}\nonumber\\&=e^{-\beta e^{\gamma n}-\gamma n} \nonumber\\&\ge e^{-\beta'e^{\gamma n}}\nonumber\\&=C'e^{-\beta'e^{\gamma n}}\|f\|_\theta\|g\|_\theta \nonumber,
\end{align}
for some $\beta'>\beta>0$ and $C'=\theta\frac{\sqrt{3}}{\pi}>0$.%<\theta=C$.
\end{proof}

The pullback under the Bernoulli doubling map $\phi(z)=z^2$ defines the \textit{Koopman} precomposition operator 
\begin{align*}
 \mathcal{L}:L_2(\mathbb{S}^1,\mu)&\to L_2(\mathbb{S}^1,\mu) \\
f&\mapsto f \circ  \phi.
\end{align*}
Next, we study the operator properties of $\mathcal{L}$ on different spaces. In particular, we will see that $\mathcal{L}$ has nice spectral properties on the space of \textit{hyperfunctions}.

\begin{definition}\label{def:hyperfunctions}
Let $0<\theta<1$. The space of hyperfunctions  $\mathcal{H}_{\theta^{-1}}$ is defined as the closure of all linear combinations of $z^j,j\in\mathbb{Z}$, with respect to the norm 
\[
 \|f\|_{\theta^{-1}}:=\sqrt{\sum_{k\in\mathbb{Z}} |f_k|^2\theta^{2|k|}},
 \]
 where $f_k$ are the Fourier coefficients of $f$.
 We denote by $\mathcal{H}_{\theta^{-1}}^0:=\mathcal{H}_{\theta^{-1}}\ominus \mathbb{C}$ the subspace of $\mathcal{H}_{\theta^{-1}}$ orthogonal to constants, that is, those with $f_0=0$. 
\end{definition}
With respect to the Fourier pairing
\[
\langle u,v\rangle=\sum_{k\in\mathbb Z}u_k\overline{v_k},
\]
the space $\mathcal H_{\theta^{-1}}$ is naturally identified with the Hilbert
dual of the space of analytic functions $\mathcal H_\theta$. Its elements may have Fourier coefficients growing
exponentially in $|k|$. The use of these types of spaces is not new in dynamics; see for example \cite{FR06}.

\begin{prop}\label{mult two}
For $0<\theta<1$, the operator $\mathcal{L} : \mathcal{H}_{\theta^{-1}} \to \mathcal{H}_{\theta^{-1}}$ is trace class. Moreover, $\mathcal{L}$ is a strict contraction when restricted to the subspace orthogonal to constants $\mathcal{L}: \mathcal{H}_{\theta^{-1}}^0\to \mathcal{H}_{\theta^{-1}}^0$ with
\[
\|\mathcal{L}\|_{\mathcal{H}^0_{\theta^{-1}}\to\mathcal{H}^0_{\theta^{-1}}}=\theta.
\]
\end{prop}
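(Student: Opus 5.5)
The plan is to compute everything explicitly in the orthonormal basis of $\mathcal{H}_{\theta^{-1}}$ given by $e_k:=\theta^{-|k|}z^k$, $k\in\mathbb{Z}$. This is an orthonormal basis because $\|z^k\|_{\theta^{-1}}=\theta^{|k|}$. Since $\mathcal{L}z^k=z^{2k}$, we get
\[
\mathcal{L}e_k=\theta^{-|k|}z^{2k}=\theta^{-|k|}\theta^{2|k|}e_{2k}=\theta^{|k|}e_{2k}.
\]
So $\mathcal{L}$ is a weighted shift along the injective map $k\mapsto 2k$ on $\mathbb{Z}$. It fixes $e_0$ (the constants) and sends $e_k$ to a multiple of $e_{2k}$ with weight $\theta^{|k|}$. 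First I will check that $\mathcal{L}$ extends boundedly from trigonometric polynomials to $\mathcal{H}_{\theta^{-1}}$. The images $e_{2k}$ are pairwise orthogonal because $k\mapsto 2k$ is injective. Hence
\[
\Bigl\|\mathcal{L}\sum c_ke_k\Bigr\|^2=\sum|c_k|^2\theta^{2|k|}\le\sum|c_k|^2,
\]
which gives boundedness with norm at most $1$.

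\textbf{Trace class.} I will write $\mathcal{L}=\sum_k\theta^{|k|}\,e_{2k}\otimes e_k^*$, where each term is the rank-one operator $u\mapsto\langle u,e_k\rangle\theta^{|k|}e_{2k}$. Equivalently, $\mathcal{L}=VD$, with $D$ the diagonal operator with entries $\theta^{|k|}$ and $V$ the isometry $e_k\mapsto e_{2k}$. The singular values of $\mathcal{L}$ are then exactly the numbers $\theta^{|k|}$, because $\mathcal{L}^*\mathcal{L}=D^2$ since $V^*V=I$. Therefore
\[
\|\mathcal{L}\|_{1}=\sum_{k\in\mathbb{Z}}\theta^{|k|}=\frac{1+\theta}{1-\theta}<\infty,
\]
so $\mathcal{L}$ is trace class. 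The only subtle point is $V^*V=I$, which holds because $V$ maps an orthonormal basis onto an orthonormal set.

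\textbf{Norm on $\mathcal{H}^0_{\theta^{-1}}$.} The subspace $\mathcal{H}^0_{\theta^{-1}}$ is spanned by $e_k$ with $k\neq0$, and it is invariant because $2k\neq0$ whenever $k\neq0$. On it, $\mathcal{L}^*\mathcal{L}=D^2$ restricted to $k\ne0$, so
\[
\|\mathcal{L}\|=\sup_{k\neq0}\theta^{|k|}=\theta.
\]
The supremum is attained at $k=\pm1$: for instance $\|\mathcal{L}e_1\|=\theta$ with $e_1=\theta^{-1}z$. This gives the strict contraction with norm exactly $\theta$. I do not expect any real obstacle here. The only care needed is the normalization of the basis, so that the weight comes out as $\theta^{2|k|-|k|}=\theta^{|k|}$ and not its inverse, and the use of orthogonality of the images in the norm computation.
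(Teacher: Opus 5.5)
Your proof is correct and follows essentially the same route as the paper. Both use the orthonormal basis $e_k=\theta^{-|k|}z^k$ and the identity $\mathcal{L}e_k=\theta^{|k|}e_{2k}$, deduce that $\mathcal{L}^*\mathcal{L}$ is diagonal with entries $\theta^{2|k|}$, and read off the trace norm and the norm $\theta$ on $\mathcal{H}^0_{\theta^{-1}}$ from the singular values $\theta^{|k|}$. Your factorization $\mathcal{L}=VD$ is only a tidier packaging of the paper's explicit matrix computation of $\mathcal{L}^*\mathcal{L}$.
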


\begin{proof}
We consider the orthonormal basis $ e_k=\theta^{-|k|}z^k, k\in\mathbb{Z}$ of $\mathcal{H}_{\theta^{-1}}$ with respect to its norm  $\|f\|_{\theta^{-1}}:=\sqrt{\sum_{k\in\mathbb{Z}} |f_k|^2\theta^{2|k|}}$.

By definition,
\[
   \mathcal{L} e_k = \theta^{-|k|}\,z^{2k}=\theta^{|k|} e_{2k}.
\]
Therefore, the matrix of $\mathcal{L}$ in the orthonormal basis 
$\{e_k\}_{k\in\mathbb{Z}}$ has entries
\begin{equation}\label{matrix}
\mathcal{L}_{jk}:=\langle \mathcal{L}e_k,e_j\rangle_{\theta^{-1}}=\theta^{|k|}\delta_{j,2k}.
\end{equation}
As we can see, this is a very sparse matrix (with many zero elements).
Next, let us compute the singular values of $\mathcal{L}$, which by definition are the eigenvalues of $\sqrt{\mathcal{L}^*\mathcal{L}}$.
We find
\begin{align*}
\mathcal{L}^*_{jk} :&=\langle \mathcal{L}^*e_k, e_j\rangle_{\theta^{-1}} \\ &=\langle e_k,\mathcal{L}e_j\rangle_{\theta^{-1}} \\&=\overline{\langle  \mathcal{L}e_j,e_k\rangle_{\theta^{-1}}}\\&=\overline{\mathcal{L}_{kj}}\\&=\theta^{|j|}\delta_{k,2j}.
\end{align*}
Hence, 
\begin{align*}
(\mathcal{L}^*\mathcal{L})_{jk}&=\sum_{m\in\mathbb{Z}}\mathcal{L}^*_{jm}\mathcal{L}_{mk}\\&=\sum_{m\in\mathbb{Z}} \theta^{|j|}\delta_{m,2j}\theta^{|k|}\delta_{m,2k}\\&= \theta^{2|j|}\delta_{j,k}.
\end{align*}

Thus, \(\mathcal{L}^*\mathcal{L}\) is diagonal in the basis \(\{e_j\}\) with eigenvalues \(\theta^{2|j|}\). Therefore, \(\sqrt{\mathcal{L}^*\mathcal{L}}\) is diagonal in the basis \(\{e_k\}\) with eigenvalues \(\theta^{|j|}\). Hence, the singular values of $\mathcal{L}$ are
\[
   \sigma_j(\mathcal{L}) = \theta^{|j|}, \qquad j\in\mathbb{Z}_{\ge0},
\]
with multiplicity \(2\) for \(j>0\) (and multiplicity \(1\) for \(j=0\)).

Because \(0<\theta<1\),
\[
   \sum_{j\in\mathbb{Z}} \sigma_j(\mathcal{L})
     = 1 + 2\sum_{j\ge 1} \theta^j
     = 1 + \frac{2\theta}{1-\theta}
     < \infty,
\]
and hence $\mathcal{L}$ is trace class. 

The operator norm of $\mathcal{L}$ is equal to the largest singular value
\[
\|\mathcal{L}\|_{\mathcal{H}_{\theta^{-1}}\to\mathcal{H}_{\theta^{-1}}}=\max_{j\ge0} \sigma_j(\mathcal{L})=\sigma_0(\mathcal{L})=1.
\]
When restricted to the subspace orthogonal to constants $\mathcal{H}_{\theta^{-1}}^0$, we have
\[
\|\mathcal{L}\|_{\mathcal{H}^0_{\theta^{-1}}\to\mathcal{H}^0_{\theta^{-1}}}=\max_{j>0} \sigma_j(\mathcal{L})=\sigma_1(\mathcal{L})=\theta<1.
\]
Thus, $\mathcal{L}$ is a strict contraction on $\mathcal{H}_{\theta^{-1}}^0$.
\end{proof}

\begin{prop}
    The spectrum of $\mathcal{L}$ on $\mathcal{H}_{\theta^{-1}}$ 
    \[
    \sigma(\mathcal{L};\mathcal{H}_{\theta^{-1}})=\{0,1\},
    \]
    with $1$ being a simple eigenvalue corresponding to a constant eigenfunction.
    
    The spectrum of $\mathcal{L}$ restricted to its invariant subspace $\mathcal{H}^0_{\theta^{-1}}:=\mathcal{H}_{\theta^{-1}}\ominus\mathbb{C}$ is trivial
    \[
    \sigma(\mathcal{L};\mathcal{H}^0_{\theta^{-1}})=\{0\}.
    \]
\end{prop}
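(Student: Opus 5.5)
The plan is to use the explicit action of $\mathcal{L}$ on the orthonormal basis $e_k=\theta^{-|k|}z^k$ from the proof of Proposition~\ref{mult two}, namely $\mathcal{L}e_k=\theta^{|k|}e_{2k}$. First, $\mathcal{H}_{\theta^{-1}}=\mathbb{C}e_0\oplus\mathcal{H}^0_{\theta^{-1}}$, and both summands are invariant under $\mathcal{L}$, since $\mathcal{L}e_0=e_0$ and $\mathcal{L}$ maps $\{e_k:k\neq0\}$ into the closed span of the same set. Therefore
\[
\sigma(\mathcal{L};\mathcal{H}_{\theta^{-1}})=\{1\}\cup\sigma(\mathcal{L};\mathcal{H}^0_{\theta^{-1}}),
\]
and it suffices to show $\sigma(\mathcal{L};\mathcal{H}^0_{\theta^{-1}})=\{0\}$. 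Since $\mathcal{L}$ is trace class, hence compact, on the infinite-dimensional space $\mathcal{H}^0_{\theta^{-1}}$, the point $0$ always lies in the spectrum.

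The main step is to show the spectral radius on $\mathcal{H}^0_{\theta^{-1}}$ is $0$ by computing $\|\mathcal{L}^n\|$ there exactly. Iterating the basis formula gives
\[
\mathcal{L}^n e_k=\theta^{|k|(1+2+\dots+2^{n-1})}e_{2^nk}=\theta^{(2^n-1)|k|}e_{2^nk}.
\]
So $\mathcal{L}^n$ sends the orthonormal family $\{e_k\}_{k\ne0}$ to orthogonal vectors with norms $\theta^{(2^n-1)|k|}$. Hence
\[
\|\mathcal{L}^n\|_{\mathcal{H}^0_{\theta^{-1}}\to\mathcal{H}^0_{\theta^{-1}}}=\theta^{2^n-1}.
\]
Gelfand's formula then gives
\[
r(\mathcal{L})=\lim_n\theta^{(2^n-1)/n}=0,
\]
so $\mathcal{L}$ is quasi-nilpotent on $\mathcal{H}^0_{\theta^{-1}}$, and therefore $\sigma(\mathcal{L};\mathcal{H}^0_{\theta^{-1}})=\{0\}$.

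It remains to show that $1$ is a simple eigenvalue with constant eigenfunction. Suppose $\mathcal{L}f=f$ and write $f=f_0e_0+f'$ with $f'\in\mathcal{H}^0_{\theta^{-1}}$. Then $\mathcal{L}f'=f'$. But $1\notin\sigma(\mathcal{L};\mathcal{H}^0_{\theta^{-1}})$, so $f'=0$ and the eigenspace is $\mathbb{C}e_0$. For algebraic simplicity, suppose $(\mathcal{L}-1)^2f=0$. Projecting onto the invariant complement, $(\mathcal{L}-1)^2f'=0$, and $\mathcal{L}-1$ is invertible on $\mathcal{H}^0_{\theta^{-1}}$, so again $f'=0$. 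Equivalently, the Riesz projection at $1$ is the rank-one projection onto constants.

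The only point needing care is the invariance of the complement $\mathcal{H}^0_{\theta^{-1}}$ together with the direct-sum decomposition of the spectrum. Both are immediate here because the matrix of $\mathcal{L}$ in the basis $\{e_k\}$ is block diagonal with respect to $\{0\}\sqcup(\mathbb{Z}\setminus\{0\})$. So no real obstacle is expected: the exact norm computation is the substance of the argument.
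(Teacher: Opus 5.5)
Your proof is correct, but it reaches quasi-nilpotency by a different route from the paper's.

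The paper splits $\mathcal{H}_{\theta^{-1}}=\mathcal{H}^+_{\theta^{-1}}\oplus\mathbb{C}\oplus\mathcal{H}^-_{\theta^{-1}}$. It observes that on each of $\mathcal{H}^\pm_{\theta^{-1}}$ the matrix \eqref{matrix} is triangular with zero diagonal. It then cites the fact that a compact triangular operator has spectrum equal to its diagonal entries together with $0$.

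You use only the splitting $\mathbb{C}e_0\oplus\mathcal{H}^0_{\theta^{-1}}$. You compute $\mathcal{L}^ne_k=\theta^{(2^n-1)|k|}e_{2^nk}$, get the exact norm $\|\mathcal{L}^n\|=\theta^{2^n-1}$ from the injectivity of $k\mapsto 2^nk$, and conclude with Gelfand's formula.

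What your route buys:
\begin{itemize}
\item It is more self-contained. Compactness is needed only to put $0$ in the spectrum, and even that follows from nonemptiness of the spectrum of a bounded operator.
\item It makes the double exponential decay of $\|\mathcal{L}^n\|$ explicit, which is the mechanism the rest of the paper relies on.
\item You also verify the algebraic simplicity of the eigenvalue $1$, which the paper only asserts.
\end{itemize}

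What it gives up is generality. The exact norm computation depends on $z^2$ sending characters to characters. Your Gelfand step does extend to any $\phi$ with $\phi'(0)=0$ via the bound of Theorem~\ref{norm bound}, which is essentially how the paper argues in Theorem~\ref{prop:jsr}. By contrast, the paper's triangular argument carries over verbatim to every $\phi\in\mathcal{B}^0$ (Theorem~\ref{thm:spectrum}). There the diagonal entries $\phi'(0)^k$ identify the nonzero spectrum $\{\lambda^m,\overline{\lambda}^{\,m}\}$, which a spectral-radius bound alone cannot do.
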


\begin{proof}
First, there is an $\mathcal{L}$-invariant decomposition
\[
\mathcal{H}_{\theta^{-1}}=\mathcal{H}^+_{\theta^{-1}}\oplus\mathbb{C}\oplus\mathcal{H}^-_{\theta^{-1}}, 
\]
where $\mathcal{H}^{\pm}_{\theta^{-1}}$ is the subspace generated by $e_k$ with $k>0$ ($k<0$ respectively). Note that $\mathcal{H}^+_{\theta^{-1}}\oplus\mathcal{H}^-_{\theta^{-1}}=\mathcal{H}^0_{\theta^{-1}}$.
Then
\[
\sigma(\mathcal{L}; \mathcal{H}_{\theta^{-1}})=\sigma(\mathcal{L}; \mathcal{H}^+_{\theta^{-1}})\cup \sigma(\mathcal{L}; \mathbb{C})\cup \sigma(\mathcal{L}; \mathcal{H}^-_{\theta^{-1}}).
\]
We have $\lambda=1$ as a simple eigenvalue with $e_0=1$ being the corresponding eigenfunction
\[
\sigma(\mathcal{L};\mathbb{C})=\{1\}.
\]

From \eqref{matrix}, we see that $\mathcal{L}$ restricted to $\mathcal{H}^+_{\theta^{-1}}$ or $\mathcal{H}^-_{\theta^{-1}}$ has an upper triangular structure with zeros on the diagonal. Since $\mathcal{L}$ is a trace class operator, it is compact, so is its restriction to a closed invariant subspace. If an operator has an upper triangular form and is compact, then its spectrum is equal to the set of matrix entries on the main diagonal together with $0$.

So,
\[
\sigma(\mathcal{L}; \mathcal{H}^+_{\theta^{-1}})=\sigma(\mathcal{L}; \mathcal{H}^-_{\theta^{-1}})=\{0\}.
\]
Thus,
\[
\sigma(\mathcal{L}; \mathcal{H}_{\theta^{-1}})=\{0,1\} \quad \text{and}\quad \sigma(\mathcal{L};\mathcal{H}^0_{\theta^{-1}})=\{0\}.
\]
\end{proof}

Operators with zero spectral radius are called \textit{quasi-nilpotent}. 

\begin{corollary}[Super exponential mixing]\label{super} Assume that $\mathcal{L}:\mathcal{H}^0_{\theta^{-1}}\to \mathcal{H}^0_{\theta^{-1}}$ is quasi-nilpotent. Then for any $\epsilon>0$, there exists $C>0$ such that for any $f\in \mathcal{H}^0_{\theta}$, $g\in\mathcal{H}_\theta$ and $n\ge0$
 \[
|\langle \mathcal{L}^n f,g\rangle_{L_2}| \le C\epsilon^n\|f\|_{{\theta}}\|g\|_{\theta}.
\]
\end{corollary}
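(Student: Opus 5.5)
The plan is to combine Gelfand's spectral radius formula with the duality between $\mathcal{H}_\theta$ and $\mathcal{H}_{\theta^{-1}}$ under the Fourier pairing. Since $\mathcal{L}$ is quasi-nilpotent on $\mathcal{H}^0_{\theta^{-1}}$, we have
\[
\lim_{n\to\infty}\|\mathcal{L}^n\|^{1/n}_{\mathcal{H}^0_{\theta^{-1}}\to\mathcal{H}^0_{\theta^{-1}}}=0 .
\]
So for a given $\epsilon>0$ there is $n_0$ with $\|\mathcal{L}^n\|\le\epsilon^n$ for all $n\ge n_0$. For the finitely many $n<n_0$ we use the crude bound $\|\mathcal{L}^n\|\le \max_{m<n_0}\|\mathcal{L}^m\|$, which is finite since $\mathcal{L}$ is bounded. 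Hence
\[
C_0:=\max_{0\le m<n_0}\|\mathcal{L}^m\|\,\epsilon^{-m},
\]
combined with $\max(C_0,1)$, gives $\|\mathcal{L}^n\|_{\mathcal{H}^0_{\theta^{-1}}}\le \max(C_0,1)\,\epsilon^n$ for every $n\ge0$.

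Next we pass from the operator norm to the correlation. For $f\in\mathcal{H}^0_\theta$ we have $\mathcal{L}^nf=f\circ\phi^n$, and its zero Fourier coefficient vanishes because $\phi^n$ preserves $\mu$. By Parseval, $\langle \mathcal{L}^n f,g\rangle_{L_2}=\sum_k (\mathcal{L}^nf)_k\overline{g_k}$, and the $k=0$ term is zero. Inserting the weights $\theta^{|k|}\theta^{-|k|}$ and applying Cauchy--Schwarz gives
\[
|\langle\mathcal{L}^nf,g\rangle_{L_2}|\le \|\mathcal{L}^nf\|_{\theta^{-1}}\|g\|_\theta .
\]
Since $f\in\mathcal{H}^0_\theta\subset\mathcal{H}^0_{\theta^{-1}}$, this is at most $\|\mathcal{L}^n\|\,\|f\|_{\theta^{-1}}\|g\|_\theta$. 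Finally, $\|f\|_{\theta^{-1}}\le\|f\|_\theta$ because $\theta^{2|k|}\le\theta^{-2|k|}$. This yields the claim with $C=\max(C_0,1)$.

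The only delicate point is making sure the operator being iterated is the one whose quasi-nilpotency is assumed. Concretely, $\mathcal{L}$ must preserve $\mathcal{H}^0_{\theta^{-1}}$, and its action on $\mathcal{H}^0_\theta\subset\mathcal{H}^0_{\theta^{-1}}$ must be the restriction of the same bounded operator. For the doubling map both facts are immediate from the matrix \eqref{matrix}. In the general setting they follow from invariance of $\mu$, which gives zero mean of $f\circ\phi$ for zero-mean $f$. The remaining steps are routine.
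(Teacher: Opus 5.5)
Your proposal is correct and follows essentially the paper's proof. The paper also uses Gelfand's formula to get $\|\mathcal{L}^n\|_{\mathcal{H}^0_{\theta^{-1}}\to\mathcal{H}^0_{\theta^{-1}}}\le C\epsilon^n$, then the weighted Cauchy--Schwarz duality $|\langle\mathcal{L}^nf,g\rangle_{L_2}|\le\|\mathcal{L}^nf\|_{\theta^{-1}}\|g\|_\theta$ and $\|f\|_{\theta^{-1}}\le\|f\|_\theta$; your explicit choice of $C$ covering the finitely many $n<n_0$ just spells out a step the paper leaves implicit.
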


\begin{proof}
 From the Gelfand formula for the spectral radius
 \[
r(\mathcal{L};\mathcal{H}^0_{\theta^{-1}})=\lim_{n\to\infty}\|\mathcal{L}^n\|^{1/n}_{\mathcal{H}^0_{\theta^{-1}}\to\mathcal{H}^0_{\theta^{-1}}},
 \]
 it follows that having a zero spectral radius is equivalent to super exponential decay of the norms of compositions of $\mathcal{L}$ with itself. In other words,
  for any $\epsilon>0$ there exists $C>0$ such that for any $n\ge0$
 \[
\|\mathcal{L}^n\|_{\mathcal{H}^0_{\theta^{-1}}\to\mathcal{H}^0_{\theta^{-1}}}\le C \epsilon^n.
 \]   
Applying the Cauchy-Schwarz duality inequality, it follows that for any $f\in \mathcal{H}^0_{\theta^{-1}}$ and $g\in\mathcal{H}_\theta$
 \[
|\langle \mathcal{L}^n f,g\rangle_{L_2}| \le \|\mathcal{L}^nf\|_{{\theta^{-1}}}\|g\|_{\theta} \le C\epsilon^n\|f\|_{\theta^{-1}}\|g\|_{\theta}.
 \]
 In particular, for any $f\in \mathcal{H}^0_{\theta}\subset \mathcal{H}_{\theta^{-1}}^0$, $g\in\mathcal{H}_\theta$ and any $n\ge0$
 \[
|\langle \mathcal{L}^n f,g\rangle_{L_2}| \le C\epsilon^n\|f\|_{{\theta}}\|g\|_{\theta}.\]
\end{proof}

\begin{rem} When $\theta=1$, the norms $\|\cdot\|_\theta$ and $\|\cdot\|_{\theta^{-1}}$ degenerate to the standard $L_2(\mathbb{S}^1,\mu)$-norm.
In this case, $\mathcal{L}$ is an isometry:
\[\|\mathcal{L}f\|_{L_2}=\|f\|_{L_2} \quad \text{for any}\quad f\in L_2(\mathbb{S}^1,\mu).\]
Therefore,
\[
   \mathcal{L}^* \mathcal{L} = \mathrm{Id}, \qquad \sigma_j(\mathcal{L})=1, \qquad j\in\mathbb{Z}.
\]

The condition $\theta<1$ is therefore essential for trace class and contraction properties.
\end{rem}

In fact, for any isometry $\mathcal{L}$, 
no decaying sequence can dominate all correlations of $L_2$ observables (see Corollary~\ref{no-uniform-mix}). That is why, in order to gain uniform control of the decay of correlations, spaces other than $L_2(\mathbb{S}^1,\mu)$ need to be considered. 

\section{Proof of Theorem~\ref{classification} in the Normalized Case}\label{proof}

\subsection{Rotations}
\begin{proof}[Proof of Part 1 of Theorem~\ref{classification}]
\label{rotations} Every $\mu$-preserving Blaschke product $\phi_j$ is represented by its Fourier series, converging everywhere in some neighborhood of the unit circle
\[
\phi_j(z)=\sum_{k=1}^\infty \frac{\phi_j^{(k)}(0)}{k!}z^k.
\]
Since $\phi_j$ takes the unit circle into itself, we have a restriction on its coefficients
\[
\sum_{k=1}^\infty \left|\frac{\phi_j^{(k)}(0)}{k!}\right|^2=\int_{\mathbb{S}^1}|\phi_j(z)|^2\,d\mu(z)=1.
\]
From here we see that the first derivative has to be bounded by 1
\[
|\phi_j'(0)|\le1.
\]
Moreover, if
\[
|\phi_j'(0)|=1,
\]
then all other coefficients have to vanish. Thus, $\phi_j$ is a rotation, i.e.,
\[
\phi_j(z)=\phi_j'(0) z=e^{i\psi}z, \quad \psi\in[0,2\pi),
\]
hence $\tau$ contains a rotation. 

Circle rotations are not strongly mixing (not even weakly, see \cite{HK95}). Thus we see that if for some $1\le j\le N$, $|\phi_j'(0)|=1$, then $\tau$ is not mixing. So, having $|\phi_j'(0)|<1$ for all $ 1\le j\le N$ is a necessary condition for $\tau$ to be  mixing. 
\end{proof}

\subsection{Koopman Precomposition Operator} For each $a\in\mathbb{F}_N^+$, the action $\tau(a)$ induces the \textit{Koopman} precomposition operator 
\begin{align*}
 \mathcal{L}^a:L_2(\mathbb{S}^1,\mu)&\to L_2(\mathbb{S}^1,\mu) \\
f&\mapsto f \circ  \tau(a).
\end{align*}
Note that the constant function $f(z)=c$ is invariant under $\mathcal{L}^a$
\[
\mathcal{L}^af=f\circ \tau(a)=f.
\]
Moreover, $\phi_1, \dots, \phi_N$ are assumed to be $\mu$-preserving, so $\mathcal{L}^a$ preserves orthogonality to the constants
\[
\int_{\mathbb{S}^1}\left(\mathcal{L}^af\right) \, d\mu=\int_{\mathbb{S}^1}\left(f\circ\tau(a)\right) \, d\mu=\int_{\mathbb{S}^1}f \, d\mu,
\]
thus inducing an operator 
\[
\mathcal{L}^a:L_2^0(\mathbb{S}^1,\mu)\to L_2^0(\mathbb{S}^1,\mu),
\]
where $L^0_2(\mathbb{S}^1,\mu):=L_2(\mathbb{S}^1,\mu)\ominus \mathbb{C}$, the subspace orthogonal to constants.

As in the case of the motivating example (see Section~\ref{motivating example}), $\mathcal{L}^a$ has good spectral properties on $\mathcal{H}_\theta^*=\mathcal{H}_{\theta^{-1}}$ (the space dual to analytic functions $\mathcal{H}_\theta$), as the next theorem shows.

\begin{theorem}\label{norm bound} Let $0<\theta<1$. Let $\phi_1, \dots, \phi_N\in\mathcal{B}^0$. 

\begin{enumerate}
    \item If $\phi_j'(0)=0$ for all $ 1\le j\le N$, then there exist $\beta, \gamma>0$ and $C>0$ such that for any $a\in \mathbb{F}_N^+$
    \[
    \|\mathcal{L}^a\|_{\mathcal{H}_{\theta^{-1}}^0\to \mathcal{H}_{\theta^{-1}}^0}\le C e^{-\beta e^{\gamma|a|}}.
    \]
    \item If $|\phi_j'(0)|<1$ for all $ 1\le j\le N$, then there exist $\kappa>0$ and $C>0$ such that for any $a\in \mathbb{F}_N^+$
    \[
    \|\mathcal{L}^a\|_{\mathcal{H}_{\theta^{-1}}^0\to \mathcal{H}_{\theta^{-1}}^0}\le C e^{-\kappa|a|}.
    \]
\end{enumerate}
\end{theorem}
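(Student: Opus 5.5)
Fix a word $a=i_1\dots i_n$ and write $\Phi=\tau(a)$. The plan is to compute the matrix of $\mathcal{L}^a$ in the orthonormal basis $e_k=\theta^{-|k|}z^k$ of $\mathcal{H}_{\theta^{-1}}$, bound its entries, and then bound the norm by the Schur test or the Hilbert--Schmidt norm.

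\textbf{Step 1 (the key estimate).} Since each $\phi_j\in\mathcal{B}^0$, the composition $\Phi$ is a Blaschke product with $\Phi(0)=0$, analytic on a neighborhood of $\overline{\mathbb{D}}$. For $k>0$, $\Phi^k$ is analytic in $\mathbb{D}$, so $(\Phi^k)_m=0$ for $m<k\cdot\operatorname{ord}_0\Phi$. On $\mathbb{S}^1$ we have $\overline{\Phi}=1/\Phi$, so $\Phi^{-k}=\overline{\Phi^k}$ and the negative modes are conjugates of the positive ones. Hence $\mathcal{L}^a$ maps the span of $z^k$, $k>0$, into the span of $z^m$ with $m\ge k\,d(a)$, where $d(a):=\operatorname{ord}_0\Phi$. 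The vanishing order is multiplicative, so $d(a)=\prod_\ell \operatorname{ord}_0\phi_{i_\ell}$. In case (1), $d(a)\ge p^{|a|}$ with $p=\min_j\operatorname{ord}_0\phi_j\ge2$. In case (2) we only know $d(a)\ge1$, so this step gives no decay and case (2) needs Step 3.

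\textbf{Step 2 (norm bound in case (1)).} The matrix entries are
\[
\langle\mathcal{L}^ae_k,e_m\rangle_{\theta^{-1}}=\theta^{-|k|}\theta^{|m|}(\Phi^k)_m .
\]
The Fourier coefficients satisfy $|(\Phi^k)_m|\le\|\Phi^k\|_{L_2}=1$ because $|\Phi|=1$ on the circle, and by Parseval $\sum_m|(\Phi^k)_m|^2=1$. Therefore
\[
\|\mathcal{L}^ae_k\|^2_{\theta^{-1}}=\theta^{-2|k|}\sum_{m\ge kd}|(\Phi^k)_m|^2\theta^{2m}\le\theta^{2|k|(d-1)} .
\]
The images of distinct $e_k$ are not orthogonal, so I will use the Hilbert--Schmidt norm:
\[
\|\mathcal{L}^a\|_{HS}^2\le 2\sum_{k\ge1}\theta^{2k(d-1)}\le C\theta^{2(d-1)} .
\]
This gives
\[
\|\mathcal{L}^a\|\le C\theta^{p^{|a|}-1}=C'e^{-\beta e^{\gamma|a|}},\qquad \beta=-\log\theta,\quad \gamma=\log p .
\]

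\textbf{Step 3 (case (2), the main obstacle).} Now $d(a)$ may equal $1$, so I need decay of the coefficients themselves. Let $\lambda=\max_j|\phi_j'(0)|<1$. By the Schwarz lemma, $|\phi_j(z)|\le|z|\frac{|z|+\lambda}{1+\lambda|z|}$, so on $|z|\le r$ with $r<1$ fixed each $\phi_j$ contracts by the factor $\rho(r)=\frac{r+\lambda}{1+\lambda r}<1$. Iterating, $|\Phi(z)|\le\rho^{|a|}r$ on $|z|=r$. By Cauchy's estimate,
\[
|(\Phi^k)_m|\le r^{-m}(\rho^{|a|}r)^k .
\]
Choose $r$ with $\theta<r<1$. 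Then the entries are bounded by
\[
\theta^{-k}\theta^m r^{-m}(\rho^{|a|}r)^k=(\theta/r)^{m}\bigl(\rho^{|a|}r/\theta\bigr)^k .
\]
Summing the squares over $m\ge k$ gives a geometric factor in $(\theta/r)^{2k}$, so
\[
\|\mathcal{L}^ae_k\|^2\lesssim(\rho^{|a|})^{2k} .
\]
The Hilbert--Schmidt sum over $k\ge1$ is then $\lesssim\rho^{2|a|}$ once $\rho^{|a|}<1/2$. This yields $\kappa=-\log\rho(r)$, with the finitely many short words absorbed into $C$. The delicate point is balancing $r$ between $\theta$ and $1$ so that both geometric sums converge uniformly in $a$; the negative modes are handled by the conjugation symmetry noted in Step 1.
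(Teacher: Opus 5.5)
Your proposal is correct and follows essentially the same route as the paper. Both arguments use the matrix entries $\theta^{|m|-|k|}(\Phi^k)_m$ in the basis $e_k=\theta^{-|k|}z^k$ and a Hilbert--Schmidt bound; for case (1) both use the vanishing of $(\Phi^k)_m$ for $m<k\,p^{|a|}$, and for case (2) a Schwarz-lemma contraction $|\Phi|\le\rho^{|a|}r$ on $|z|=r$ combined with Cauchy estimates.

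Your differences are small refinements in your favor. Summing the tail by Parseval in case (1) saves a factor $1/(1-\theta^2)$. In case (2) you also use $(\Phi^k)_m=0$ for $m<k$, so every $r\in(\theta,1)$ works and $\sum_{k\ge1}\rho^{2|a|k}\le\rho^{2|a|}/(1-\rho^2)$ for all $|a|\ge1$. Hence neither the ``delicate balancing'' of $r$ you flag nor the restriction $\rho^{|a|}<1/2$ is needed. The paper, by contrast, sums over all $j\neq0$ and must pick $r$ with $r\lambda(r)<\theta$ using continuity of $\lambda(r)$.
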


The proof of this theorem is postponed until the next section. Now we use this result to prove sufficient conditions for exponential mixing and mixing at a double exponential rate on analytic observables in Theorem~\ref{classification}.
\begin{proof}[Proof of Part 2 and 3 of Theorem~\ref{classification}]
First, let us rewrite the correlation function in terms of the scalar product and the action of the precomposition operator $\mathcal{L}^a$. 
For any $f\in\mathcal{H}_\theta^0$ and $g\in\mathcal{H}_\theta$
\[
\int_{\mathbb{S}^1}\left(f\circ\tau(a)\right)\, \overline{g}\, d\mu =\langle \mathcal{L}^af, g\rangle_{L^2(\mathbb{S}^1,\mu)}.\]

 Assuming $f\in \mathcal{H}_\theta^0\subset \mathcal{H}_{\theta^{-1}}^0$, so  $\|f\|_{\theta^{-1}}\le \|f\|_{\theta}$, and applying the Cauchy-Schwarz inequality gives 
\begin{align*}
    |\langle \mathcal{L}^af, g\rangle_{L^2(\mathbb{S}^1,\mu)}|&\le \|\mathcal{L}^a f\|_{\theta^{-1}}\|g\|_{\theta} \\ &\le\|\mathcal{L}^a\|_{\mathcal{H}_{\theta^{-1}}^0 \to \mathcal{H}_{\theta^{-1}}^0}\|f\|_{\theta^{-1}}\|g\|_{\theta}
    \\ &\le \|\mathcal{L}^a\|_{\mathcal{H}_{\theta^{-1}}^0\to \mathcal{H}_{\theta^{-1}}^0}\|f\|_{\theta}\|g\|_{\theta}.
\end{align*}
The rest follows from Theorem~\ref{norm bound}.
\end{proof}

In the next subsection we prove that having $\phi_j'(0)=0$ for all $ 1\le j\le N$ is a necessary condition for $\tau$ to be mixing at a double exponential rate. 

\subsection{Lower Bound on Exponential Mixing}
In the next lemma we give a lower bound on the rate of exponential mixing in the case when $\phi_j'(0)\neq 0$ for some $ 1\le j\le N$. This way, we exclude mixing at a double exponential rate in this case  and finish the proof of Theorem~\ref{classification}.  

\begin{lemma}[Lower bound on exponential mixing]\label{lower bounds on exponential mixing} Let $\tau$ be as above. If $0\neq|\phi_j'(0)|<1$ for some $1\le j\le N$, then there exist constants $\kappa_0>0$, $C_0>0$, functions $f,g\in \mathcal{H}_\theta^0$%, $g\in\mathcal{H}_\theta$
, and a sequence $a_n\in\mathbb{F}_N^+, n\in\mathbb{N}$ with word length $|a_n|=n\to\infty$ such that
    \begin{equation}\label{lower bound for exponential m}
    \left| \int_{\mathbb{S}^1}\left(f\circ\tau(a_n)\right)\,\overline{g}\,d\mu \right|= C_0 e^{-\kappa_0|a_n|}\|f\|_{\theta}\|g\|_{\theta}.
    \end{equation}
\end{lemma}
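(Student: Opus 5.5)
Take $a_n=jj\cdots j$ ($n$ letters), so $\tau(a_n)=\phi_j^{\circ n}=:\Phi_n$, and write $\lambda:=\phi_j'(0)$ with $0<|\lambda|<1$. Choose $f(z)=z$ and $g(z)=z$. Both lie in $\mathcal{H}_\theta^0$, with $\|f\|_\theta=\|g\|_\theta=\theta^{-1}$.

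The correlation reduces to a single Fourier coefficient:
\[
\int_{\mathbb{S}^1}(f\circ\tau(a_n))\,\overline{g}\,d\mu=\int_{\mathbb{S}^1}\Phi_n(z)\,\overline{z}\,d\mu(z)=(\Phi_n)_1 .
\]
Since $\Phi_n$ is analytic on a neighborhood of the closed disk, its first Fourier coefficient on the circle is its first Taylor coefficient at $0$, namely $\Phi_n'(0)$. The chain rule at the common fixed point $0$ gives
\[
\Phi_n'(0)=\prod_{k=1}^{n}\phi_j'\bigl(\phi_j^{\circ(k-1)}(0)\bigr)=\lambda^n .
\]
Hence the correlation equals exactly $|\lambda|^n$.

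Now set $\kappa_0:=-\log|\lambda|$, which is positive because $0<|\lambda|<1$, and $C_0:=\theta^{2}=(\|f\|_\theta\|g\|_\theta)^{-1}$. Then the correlation is
\[
|\lambda|^n=C_0e^{-\kappa_0 n}\|f\|_\theta\|g\|_\theta ,
\]
which is \eqref{lower bound for exponential m} with equality. This excludes any bound of the form $Ce^{-\beta e^{\gamma|a|}}$, because $e^{-\kappa_0 n}$ decays strictly slower.

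No step is a real obstacle. The only point needing care is identifying the Fourier coefficient with the Taylor coefficient, which holds because finite Blaschke products in $\mathcal{B}^0$ are analytic across the circle. The mean-zero requirement is immediate, since $f$ and $g$ have no constant term.
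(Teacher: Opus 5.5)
Your proposal is correct and matches the paper's proof: the same choice $f=g=z$, $a_n=j^n$, $C_0=\theta^2$, $\kappa_0=-\log|\lambda|$, with the correlation reduced to $|(\phi_j^{\,n})'(0)|=|\lambda|^n$ by the chain rule at the fixed point $0$. The only cosmetic difference is that the paper evaluates the contour integral by the Residue Theorem, whereas you identify the first Fourier coefficient with the first Taylor coefficient; these are the same computation.
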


\begin{proof} Let $\phi(z) :=\phi_j(z)$ and $\lambda:=\phi'_j(0)$, $0 < |\lambda| < 1$. Then
\[\phi(z)=\lambda z+\sum_{k=2}^\infty \frac{\phi^{(k)}(0)}{k!}z^k.
\]
    Consider $f(z)=g(z)=z$ and the sequence $a_n=j^n, n\in\mathbb{N}$, then $|a_n|=n$ and
    \[  \Big|\int_{\mathbb{S}^1}f(\tau(a)z)\overline{g(z)}d\mu (z)
\Big|=\Big|\frac{1}{2\pi i}\int_{\mathbb{S}^1}\phi^{n}(z)z^{-1}\frac{dz}{z}\Big|,
    \]
    where $\phi^{n}=\phi\circ \dots\circ \phi$ is $n$-fold composition of $\phi$ with itself. Next, we  apply the Residue Theorem to the last expression, use the chain rule, and $\phi(0)=0$, to get
    \begin{align*}
\Big|\int_{\mathbb{S}^1}f(\tau(a)z)\overline{g(z)}d\mu
\Big| &= | (\phi^{n})'(0) |\\  &= |\phi'(\phi^{n-1}(0))\dots\phi'(0)| \\&= |(\phi'(0))|^n\\&=|\lambda|^n.
    \end{align*}
   Using Definition~\ref{analytic space}, we have
    \[
    \|f\|_{\theta}=\|g\|_{\theta}=\theta^{-1}.
    \]
Now take $C_0=\left(\|f\|_{\theta}\|g\|_{\theta}\right)^{-1}=\theta^{2}$ and $\kappa_0=-\log |\lambda|>0$ to get \eqref{lower bound for exponential m}.      
\end{proof}
    
\section{Proof of Theorem~\ref{norm bound}}\label{one-more-proof}
First, we estimate the operator norm by its Hilbert-Schmidt norm
\[
  \|\mathcal{L}^a\|^2_{\mathcal{H}_{\theta^{-1}}^0\to \mathcal{H}_{\theta^{-1}}^0}\le\sum_{0\neq j,k\in\mathbb{Z}} |\langle\mathcal{L}^a e_k, e_j\rangle_{\mathcal{H}_{\theta^{-1}}}|^2%|(\mathcal{L}^a)_{j,k}|^2,
  \]  
  where %$(\mathcal{L}^a)_{j,k}=\langle\mathcal{L}^a e_k, e_j\rangle_{\mathcal{H}_\theta^*}$, and 
  $\{e_k=\theta^{-|k|}z^k\}_{k\in\mathbb{Z}}$ is an orthonormal basis of $\mathcal{H}_{\theta^{-1}}$ (since $\|z^k\|_{\theta^{-1}}=\theta^{|k|}$). Note that in the sum we take only non-zero $j$ and $k$, since we restrict the operator to the subspace $\mathcal{H}_{\theta^{-1}}^0$ orthogonal to constant functions. 

\begin{lemma}\label{main formula} For $a\in\mathbb{F}_N^+$ and $j,k\in\mathbb{Z}$,
 \begin{align*}
  |\langle\mathcal{L}^a e_k, e_j\rangle_{\mathcal{H}_{\theta^{-1}}}|= \theta^{|j|-|k|}\left|\frac{1}{2\pi i}\int_{\mathbb{S}^1}(\tau(a)z)^k z^{-j}\frac{dz}{z}\right|,
 \end{align*}
 and
 \[\left|\frac{1}{2\pi i}\int_{\mathbb{S}^1}(\tau(a)z)^k z^{-j}\frac{dz}{z}\right|\le 1.\]
\end{lemma}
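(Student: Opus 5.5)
The plan is to compute the matrix entry directly from the definitions and then bound the remaining integral by the unimodularity of $\tau(a)$ on the circle.

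\emph{Step 1: inner product in the basis.} The inner product on $\mathcal{H}_{\theta^{-1}}$ is $\langle u,v\rangle_{\mathcal{H}_{\theta^{-1}}}=\sum_m u_m\overline{v_m}\,\theta^{2|m|}$. Since $e_j=\theta^{-|j|}z^j$ has the single nonzero Fourier coefficient $\theta^{-|j|}$ at index $j$, for any $u$ we get
\[
\langle u,e_j\rangle_{\mathcal{H}_{\theta^{-1}}}=u_j\,\theta^{-|j|}\theta^{2|j|}=\theta^{|j|}u_j .
\]

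\emph{Step 2: apply this to $u=\mathcal{L}^ae_k$.} We have $u=\mathcal{L}^ae_k=\theta^{-|k|}(\tau(a)z)^k$. Because $\tau(a)$ is a finite Blaschke product, a composition of the $\phi_i$, it is analytic in a neighborhood of $\mathbb{S}^1$ and does not vanish there. Hence $(\tau(a)z)^k$ is analytic near the circle for every $k\in\mathbb{Z}$, negative $k$ included. Its $j$-th Fourier coefficient is
\[
\frac{1}{2\pi i}\int_{\mathbb{S}^1}(\tau(a)z)^k z^{-j}\frac{dz}{z},
\]
since $d\mu=\frac{1}{2\pi i}\frac{dz}{z}$. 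Combining with Step 1 gives
\[
\langle\mathcal{L}^ae_k,e_j\rangle_{\mathcal{H}_{\theta^{-1}}}=\theta^{|j|-|k|}\cdot\frac{1}{2\pi i}\int_{\mathbb{S}^1}(\tau(a)z)^k z^{-j}\frac{dz}{z}.
\]
Taking absolute values yields the first claim. One should also confirm that $\mathcal{L}^ae_k$ lies in $\mathcal{H}_{\theta^{-1}}$; this holds because an analytic function near the circle has exponentially decaying coefficients.

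\emph{Step 3: the bound.} On $\mathbb{S}^1$ we have $|\tau(a)z|=1$, so $|(\tau(a)z)^k z^{-j}|=1$ pointwise. The integral is therefore the integral of a unimodular function against the probability measure $\mu$, and its modulus is at most $\int_{\mathbb{S}^1}1\,d\mu=1$. Equivalently, it is a single Fourier coefficient of a function of $L_2$-norm $1$, which is bounded by $1$ via Parseval.

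There is no real obstacle here. The only points needing care are the convention for which argument of the inner product is conjugated, so that the weight comes out as $\theta^{|j|}$ rather than its inverse, and the legitimacy of negative powers $k<0$, which the unimodularity of $\tau(a)$ on the circle guarantees.
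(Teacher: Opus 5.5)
Your proposal is correct and follows essentially the same route as the paper: you extract the weight $\theta^{|j|}$ from the $\mathcal{H}_{\theta^{-1}}$ inner product against $e_j$, identify the remaining factor as the $j$-th Fourier coefficient of $\theta^{-|k|}(\tau(a)z)^k$, and bound it by $1$ using $|\tau(a)z|=1$ on $\mathbb{S}^1$. The paper does the same computation by passing through $\langle \mathcal{L}^a z^k, z^j\rangle_{L_2}$.
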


\begin{proof}
We use $e_k=\theta^{-|k|}z^k$ and the definition of the scalar product in $\mathcal{H}_{\theta^{-1}}$ %and Riesz isomorphism lemma
\begin{align*}
|\langle\mathcal{L}^a e_k, e_j\rangle_{\mathcal{H}_{\theta^{-1}}}|&=\theta^{-|k|-|j|}|\langle \mathcal{L}^a z^k, z^j\rangle_{\mathcal{H}_{\theta^{-1}}}|\\&=\theta^{-|k|-|j|}\theta^{2|j|}|\langle \mathcal{L}^a z^k, z^j\rangle_{L_2}|\\ &=\theta^{|j|-|k|}\left|\frac{1}{2\pi i}\int_{\mathbb{S}^1}(\mathcal{L}^a z^k) z^{-j}\frac{dz}{z}\right|\\ &=\theta^{|j|-|k|}\left|\frac{1}{2\pi i}\int_{\mathbb{S}^1}(\tau(a)z)^k z^{-j}\frac{dz}{z}\right|.
\end{align*}
Since $|\tau(a)z|=1$ for $|z|=1$,
\[
\left|\frac{1}{2\pi i}\int_{\mathbb{S}^1}(\tau(a)z)^k z^{-j}\frac{dz}{z}\right|\le 1.
\]
\end{proof} 
\begin{corollary}\label{bounds using symmetry}
For $a\in\mathbb{F}_N^+$, 
\[
\|\mathcal{L}^a\|^2_{\mathcal{H}_{\theta^{-1}}^0\to \mathcal{H}_{\theta^{-1}}^0}\le2\sum_{\substack{k>0 \\ 0\neq j\in\mathbb{Z}}} \theta^{2(|j|-|k|)}\left|\frac{1}{2\pi i}\int_{\mathbb{S}^1}(\tau(a)z)^k z^{-j}\frac{dz}{z}\right|^2.
\]  
\end{corollary}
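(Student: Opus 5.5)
The statement is Corollary~\ref{bounds using symmetry}, which bounds the operator norm on $\mathcal{H}^0_{\theta^{-1}}$ by twice the partial Hilbert--Schmidt sum over $k>0$. The plan has three steps.

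\textbf{Step 1: reduce to the Hilbert--Schmidt sum.} Bound the operator norm by the Hilbert--Schmidt norm, as at the start of this section:
\[
\|\mathcal{L}^a\|^2_{\mathcal{H}_{\theta^{-1}}^0\to \mathcal{H}_{\theta^{-1}}^0}\le\sum_{0\neq j,k\in\mathbb{Z}} |\langle\mathcal{L}^a e_k, e_j\rangle_{\mathcal{H}_{\theta^{-1}}}|^2 .
\]
Then insert the formula of Lemma~\ref{main formula} for each matrix entry. This gives the sum over all nonzero $k$ and $j$ of $\theta^{2(|j|-|k|)}$ times the squared modulus of the Fourier integral.

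\textbf{Step 2: split by the sign of $k$.} Divide the sum into the terms with $k>0$ and the terms with $k<0$. The goal is to show that the $k<0$ part equals the $k>0$ part.

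\textbf{Step 3: the symmetry.} Substitute $k\mapsto -k$ and $j\mapsto -j$ in the $k<0$ part. The weight $\theta^{2(|j|-|k|)}$ is unchanged. On $|z|=1$ we have $\overline{\tau(a)z}=(\tau(a)z)^{-1}$, since $\tau(a)$ maps the circle to itself. Hence
\[
(\tau(a)z)^{-k}z^{j}=\overline{(\tau(a)z)^{k}z^{-j}} \quad\text{on } |z|=1 .
\]
The integral $\frac{1}{2\pi i}\int_{\mathbb{S}^1}\cdots\frac{dz}{z}$ is integration against the real measure $\mu$. It therefore commutes with complex conjugation, so the $(-k,-j)$ coefficient is the complex conjugate of the $(k,j)$ coefficient and has the same modulus. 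The map $(k,j)\mapsto(-k,-j)$ is a bijection from $\{k<0,\,j\neq0\}$ onto $\{k>0,\,j\neq0\}$. The two halves of the sum are therefore equal, which gives the factor $2$.

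None of these steps is a real obstacle. The only point needing care is that $j$ must also be negated along with $k$; the weight and the admissible index set are both symmetric under this change, so this works.
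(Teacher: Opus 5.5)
Your proposal is correct and is essentially the paper's proof. Both use the Hilbert--Schmidt bound and the identity $(\tau(a)z)^{-k}z^{j}=\overline{(\tau(a)z)^{k}z^{-j}}$ on $\mathbb{S}^1$, which gives $|\langle\mathcal{L}^a e_k,e_j\rangle|=|\langle\mathcal{L}^a e_{-k},e_{-j}\rangle|$, and then the bijection $(k,j)\mapsto(-k,-j)$ yields the factor $2$; the only difference is that you insert Lemma~\ref{main formula} before invoking the symmetry, whereas the paper applies it afterwards.
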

\begin{proof}
    First, let us notice that since $|\tau(a)z|=1$ for $|z|=1$, 
\[
\frac{1}{2\pi i}\int_{\mathbb{S}^1}(\tau(a)z)^k z^{-j}\frac{dz}{z}=\overline{\frac{1}{2\pi i}\int_{\mathbb{S}^1}(\tau(a)z)^{-k} z^{j}\frac{dz}{z}},
\]
so by Lemma~\ref{main formula},
\begin{equation*}
 %|(\mathcal{L}^a)_{j,k}|=|(\mathcal{L}^a)_{-j,-k}|
 |\langle\mathcal{L}^a e_k, e_j\rangle_{\mathcal{H}_{\theta^{-1}}}|=|\langle\mathcal{L}^a e_{-k}, e_{-j}\rangle_{\mathcal{H}_{\theta^{-1}}}|.
\end{equation*}
Therefore,
\[
\|\mathcal{L}^a\|^2_{\mathcal{H}_{\theta^{-1}}^0\to \mathcal{H}_{\theta^{-1}}^0}\le2\sum_{\substack{k>0 \\ 0\neq j\in\mathbb{Z}}} |\langle\mathcal{L}^a e_k, e_j\rangle_{\mathcal{H}_{\theta^{-1}}}|^2.
\]
Using Lemma~\ref{main formula}, we finish the proof of the corollary. 
\end{proof}

Next, we show that many of the coefficients in the sum are zero. This allows us to obtain double exponentially decaying upper estimates from Theorem~\ref{norm bound}.

\subsection{Double Exponential Rate}

\begin{lemma}\label{vanishing coefficients}
    Let $\phi_m'(0)=0$ for all $ 1\le m\le N$. Let $p$ be the minimal local degree of the functions $\phi_m$, $ 1\le m\le N$, at $z=0$. Then for any $a\in \mathbb{F}_N^+$, $k>0$, and $j<p^{|a|}k$ %or $\{j>p^{|a|}k, k<0\}$
    \[
    \left|\frac{1}{2\pi i}\int_{\mathbb{S}^1}(\tau(a)z)^k z^{-j}\frac{dz}{z}\right|=0.
    \] 
\end{lemma}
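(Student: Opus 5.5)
The plan is to recognize the integral as a Taylor coefficient and then control the order of vanishing of $\tau(a)$ at the origin. Since $\tau(a)$ is a finite Blaschke product, it is analytic in a neighborhood of the closed unit disk. Hence for $k>0$ the function $(\tau(a)z)^k$ is analytic there, and by the residue theorem (equivalently, Cauchy's formula for Taylor coefficients)
\[
\frac{1}{2\pi i}\int_{\mathbb{S}^1}(\tau(a)z)^k z^{-j}\frac{dz}{z}
\]
is exactly the $j$-th Taylor coefficient at $0$ of $(\tau(a)z)^k$. For $j<0$ the integrand $(\tau(a)z)^k z^{-j-1}$ is analytic on the disk, so the integral vanishes by Cauchy's theorem. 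It therefore suffices to show that $(\tau(a)z)^k$ vanishes at $0$ to order at least $p^{|a|}k$. This holds as soon as $\tau(a)$ vanishes at $0$ to order at least $p^{|a|}$.

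The key step is a multiplicativity statement for local degrees under composition. Each $\phi_m$ satisfies $\phi_m(0)=0$ and $\phi_m'(0)=0$, so its local degree $p_m$ at $0$ satisfies $p_m\ge 2$, and $p=\min_m p_m$. We can then write $\phi_m(z)=z^{p_m}u_m(z)$ with $u_m$ analytic near $0$ and $u_m(0)\neq0$. (Here $u_m(0)\neq 0$ by the definition of local degree; this is consistent since $\phi_m$ is nonconstant.)

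If $F(z)=z^{r}v(z)$ with $v(0)\neq0$, then
\[
\phi_m(F(z))=z^{rp_m}v(z)^{p_m}u_m(F(z)),
\]
and the last two factors are nonzero at $0$ because $F(0)=0$. So the local degree of $\phi_m\circ F$ at $0$ equals $r p_m\ge rp$.

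Inducting on the word length of $a=i_1\dots i_n$, with $\tau(a)=\phi_{i_n}\circ\dots\circ\phi_{i_1}$, we get that the local degree of $\tau(a)$ at $0$ is $\prod_l p_{i_l}\ge p^{|a|}$. Therefore $(\tau(a)z)^k=z^{kr}w(z)$ with $r\ge p^{|a|}$ and $w$ analytic, so all Taylor coefficients of index $j<kp^{|a|}$ vanish, which is the claim.

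I expect no serious obstacle. The only points needing care are two pieces of bookkeeping:
\begin{itemize}
\item the case $|a|=0$, where $\tau(a)$ is the identity and the bound reads $j<k$, which is immediate;
\item justifying the residue computation on the circle, which follows from the analyticity of Blaschke products on a neighborhood of $\overline{\mathbb{D}}$: their poles $1/\overline{\lambda_i}$ lie outside the closed disk.
\end{itemize}
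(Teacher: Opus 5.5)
Your proof is correct and follows the paper's argument: the paper also composes the expansions $\phi_m(z)=O(z^{p})$ to get $\tau(a)z=O(z^{p^{|a|}})$. It then concludes that $(\tau(a)z)^k z^{-j-1}$ is analytic in the disk for $j<p^{|a|}k$, so the integral vanishes by Cauchy's theorem. Your factorization $\phi_m(z)=z^{p_m}u_m(z)$ only makes the composition step explicit, and gives the exact order $\prod_l p_{i_l}$.
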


\begin{proof} 
Since $\phi_m'(0)=0$, $ 1\le m\le N$, it follows that $p\ge 2$ and
  \[
  \phi_m(z)=\sum_{k=p}^\infty \frac{\phi_m^{(k)}(0)}{k!}z^k=O(z^p).
  \]
Hence, an $n$-fold composition of $\phi_m$ with itself can be estimated as \[
\phi_m^{n}(z)=O(z^{p^n}).
\] 
Similarly for any $a\in\mathbb{F}_N^+$ the action $\tau(a)$ can be estimated as \[\tau(a)z=O(z^{p^{|a|}}).\]
Then for any $j<p^{|a|}k$
\[
(\tau(a)z)^kz^{-j-1}=O(1)
\]
is a function analytic in the unit disk. By Cauchy Theorem, its integral must be zero.
\end{proof}
\begin{proof}[Proof of Part 1 of Theorem~\ref{norm bound}]
From  Lemma~\ref{main formula}, Corollary~\ref{bounds using symmetry}, and Lemma~\ref{vanishing coefficients} it follows that for $a\in\mathbb{F}_N^+$
\[
\|\mathcal{L}^a\|^2_{\mathcal{H}_{\theta^{-1}}^0\to \mathcal{H}_{\theta^{-1}}^0}
\le 2\sum_{\substack{k>0 \\ p^{|a|}k\le j}} \theta^{2(|j|-|k|)}.
\]
Now, applying the geometric series formula twice, we get for $|a|>0$ 

\begin{align*}
\|\mathcal{L}^a\|^2_{\mathcal{H}_{\theta^{-1}}^0\to \mathcal{H}_{\theta^{-1}}^0}&\le 2\sum_{k>0}\frac{{\theta^{2(p^{|a|}-1)k}} }{1-\theta^2}\\&=\frac{2\theta^{2(p^{|a|}-1)}}{(1-\theta^2)(1-\theta^{2(p^{|a|}-1)})} \\&\le \frac{2\theta^{2(p^{|a|}-1)}}{(1-\theta^{2})^2}.
\end{align*}
Therefore
    \[
   \|\mathcal{L}^a\|_{\mathcal{H}_{\theta^{-1}}^0\to \mathcal{H}_{\theta^{-1}}^0}\le C e^{-\beta e^{\gamma|a|}},
    \]
    where \begin{equation}\label{constants for double exponential mixing}
    C=\frac{\sqrt{2}}{\theta(1-\theta^2)}>0, \quad  \beta=-\log{\theta}>0, \quad \gamma=\log{p}\ge \log 2,
    \end{equation}
    with $p$ as in Lemma~\ref{vanishing coefficients}.
\end{proof}

\subsection{Exponential Rate}\label{Exponential mixing via complex analysis}
The next lemma is based on the application of the Schwarz lemma to  the case of free semigroup action $\mathbb{F}_N^+$ (cf. \cite{ABCC22}, Lemma 3.3).

\begin{lemma}\label{Schwarz lemma} Let $|\phi_m'(0)|<1$ for all  $1\le m\le N$. 
   Then for any $0<r<1$, there exists $\lambda=\lambda(r)<1$, function continuous in $r$, such that for any $a\in\mathbb{F}_N^+$
   \[|\tau(a)z|\le \lambda^{|a|}r, \quad |z|=r.
   \]
\end{lemma}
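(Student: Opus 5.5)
The plan is to prove the bound for a single generator first, with a contraction factor that depends continuously on $r$, and then iterate it along the word $a$. Fix $m$ and consider $\psi_m(z):=\phi_m(z)/z$. Since $\phi_m(0)=0$, $\psi_m$ is analytic in $\mathbb{D}$, with $\psi_m(0)=\phi_m'(0)$. The Schwarz lemma gives $|\phi_m(z)|\le|z|$, so $|\psi_m|\le 1$ on $\mathbb{D}$.

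We first rule out $|\psi_m|=1$ at an interior point. If that happened, the maximum principle would make $\psi_m$ a unimodular constant, and then $|\phi_m'(0)|=|\psi_m(0)|=1$, contradicting the hypothesis $|\phi_m'(0)|<1$. Hence $|\psi_m(z)|<1$ throughout $\mathbb{D}$. Now set
\[
\lambda_m(r):=\max_{|z|\le r}|\psi_m(z)|=\max_{|z|=r}|\psi_m(z)|.
\]
This quantity is strictly less than $1$ by compactness of the closed disk of radius $r$. It is also continuous and nondecreasing in $r\in[0,1)$, which follows from the maximum principle together with the uniform continuity of $\psi_m$ on compact subdisks. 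Finally we define $\lambda(r):=\max_{1\le m\le N}\lambda_m(r)<1$. It is continuous as a finite maximum of continuous functions, and this is the only point where the finiteness of the generating family is used.

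Next comes the iteration along a word $a=i_1\dots i_n$. For $|z|\le r$, write $z_0=z$ and $z_l=\phi_{i_l}(z_{l-1})$. Since $|z_{l-1}|\le r$ and $\lambda$ is nondecreasing, we get
\[
|z_l|=|z_{l-1}|\,|\psi_{i_l}(z_{l-1})|\le \lambda(|z_{l-1}|)\,|z_{l-1}|\le\lambda(r)|z_{l-1}|.
\]
We also have $|z_l|\le r$ at every step, so the argument stays inside the disk where the constant applies. Induction on $l$ then gives $|\tau(a)z|\le\lambda(r)^{|a|}|z|\le \lambda(r)^{|a|}r$ on $|z|=r$, which is the claim.

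I do not expect a real obstacle. The one step needing care is the strict inequality $\lambda_m(r)<1$, because it is the only place where the hypothesis $|\phi_m'(0)|<1$ enters; this is handled by the maximum principle argument above. The monotonicity of $\lambda$ is what allows one constant $\lambda(r)$ to control every step of the composition, and that relies on the orbit never leaving $\{|z|\le r\}$, which the Schwarz lemma guarantees.
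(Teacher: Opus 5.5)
Your proof is correct and is essentially the paper's argument: your $\lambda_m(r)=\max_{|z|\le r}|\phi_m(z)/z|$ coincides with the paper's $M_m(r)/r$, and the paper's second application of the Schwarz lemma to $\phi_m(rz)/M_m(r)$ amounts to the same maximum principle for $\phi_m(z)/z$ that you use directly, followed by the same iteration along the word. You also justify two things the paper leaves implicit: that $\lambda(r)$ is continuous in $r$, and that the orbit stays in $\{|z|\le r\}$ at every step.
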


\begin{proof}
    For each $1\le m\le N$, let $M_m(r)$ be the maximum of $|\phi_m(z)|$ for $|z|\le r$. Since $|\phi_m'(0)|<1$, by the Schwarz lemma, 
    \[
    \lambda_m(r):=\frac{M_m(r)}{r}<1. 
    \]
    Then we apply the Schwarz lemma again to the function 
    \[
    \psi_m(z)=\frac{\phi_m(rz)}{M_m(r)}, \quad \ |z|\le 1,
    \]
    and we get 
\[
    |\psi_m(z)|\le |z| \quad \text{for} \quad |z|\le 1.
    \]
It follows that 
\[
|\phi_m(z)|\le \frac{M_m(r)}{r}|z|=\lambda_m(r)|z| \quad \text{for} \quad |z|\le r.
\]
Now let \[\lambda=\lambda(r):=\max_{1\le m\le N}\lambda_m(r)<1,\]
so for each $1\le m\le N$
\[
|\phi_m(z)|\le \lambda|z|\quad \text{for} \quad |z|\le r.
\]
Hence, for
$a=i_1i_2\dots i_n$ with $|a|=n$ and $|z|\le r$,
\begin{align*}
|\tau(a)z|&=\bigl|\phi_{i_n}\circ\dots\circ\phi_{i_1}(z)\bigr|
\\&\le\lambda\bigl|\phi_{i_{n-1}}\circ\dots\circ\phi_{i_1}(z)\bigr|\\
&\le\dots\\&\le\lambda^{n}|z| .
\end{align*}
Taking $|z|=r$ gives the claim.
\end{proof}

\begin{corollary}\label{corollary from Schwarz lemma} Let $|\phi_m'(0)|<1$ for all $1\le m\le N$. Then for any $0<r<1$, there exists $\lambda=\lambda(r)<1$ such that for any $a\in\mathbb{F}_N^+$ and $k>0$, $j\in\mathbb{Z}$
    \[
   \left|\frac{1}{2\pi i}\int_{\mathbb{S}^1}(\tau(a)z)^k z^{-j}\frac{dz}{z}\right|\le \lambda^{|a|k}r^{k- j}.
    \]
\end{corollary}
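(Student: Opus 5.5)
The plan is to deform the contour of integration from the unit circle to the circle $|z|=r$, and then bound the integrand there using Lemma~\ref{Schwarz lemma}.

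First, the integrand $(\tau(a)z)^k z^{-j-1}$ is meromorphic in the closed unit disk. For $k>0$ the factor $(\tau(a)z)^k$ is analytic on a neighborhood of the closed disk, since $\tau(a)$ is a finite Blaschke product and its poles lie outside the closed disk. The only possible singularity inside is therefore at $z=0$, coming from $z^{-j-1}$. By Cauchy's theorem, applied to the annulus $r\le|z|\le1$ where the integrand is analytic, we get
\[
\frac{1}{2\pi i}\int_{\mathbb{S}^1}(\tau(a)z)^k z^{-j}\frac{dz}{z}=\frac{1}{2\pi i}\int_{|z|=r}(\tau(a)z)^k z^{-j}\frac{dz}{z}.
\]
This holds for every $j\in\mathbb{Z}$.

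Second, we apply the standard ML-estimate on $|z|=r$. The normalized measure $\frac{1}{2\pi}|dz/z|$ has total mass $1$ on that circle. By Lemma~\ref{Schwarz lemma}, $|\tau(a)z|\le\lambda^{|a|}r$ there, and $|z^{-j}|=r^{-j}$. So the modulus is at most
\[
(\lambda^{|a|}r)^k\, r^{-j}=\lambda^{|a|k}r^{k-j},
\]
which is the claimed bound, with the same $\lambda=\lambda(r)$ as in the lemma.

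There is no real obstacle. The only point needing care is the justification of the contour shift, namely analyticity of $\tau(a)$ on the closed annulus. This follows because $\tau(a)$ is a composition of the $\phi_m$, each analytic in $\mathbb{D}$ and continuous up to the boundary. For the equality of the two integrals one can also take the limit from circles of radius $\rho<1$, using continuity of $\tau(a)$ on the closed disk together with uniform convergence. Note also that when $j<k$ the bound is trivially consistent with the stronger vanishing that Cauchy's theorem gives once the integrand becomes analytic at $0$ (here $\tau(a)(0)=0$), so no case split is needed.
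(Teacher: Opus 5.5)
Your proposal is correct and matches the paper's proof: shift the contour from $\mathbb{S}^1$ to $|z|=r$ by Cauchy's theorem, then bound the integrand there using $|\tau(a)z|\le\lambda^{|a|}r$ from Lemma~\ref{Schwarz lemma}. You also justify the contour shift via analyticity of the finite Blaschke product $\tau(a)$ on the closed annulus, a step the paper leaves implicit.
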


\begin{proof}
    We move the contour of integration from the unit circle to $|z|=r$
    \[
   \left|\frac{1}{2\pi i}\int_{\mathbb{S}^1}(\tau(a)z)^k z^{-j}\frac{dz}{z}\right|=\left|\frac{1}{2\pi i}\int_{|z|=r}(\tau(a)z)^k z^{-j}\frac{dz}{z}\right| \le \lambda^{|a|k}r^{k-j}.
    \]
\end{proof}

\begin{proof}[Proof of Part 2 of Theorem~\ref{norm bound}]
 From  Corollary~\ref{bounds using symmetry} and Corollary~\ref{corollary from Schwarz lemma}, it follows that for any $0<\theta<1$ and any $1<r<1$ there exists $\lambda=\lambda(r)<1$ such that
\begin{align*}
\|\mathcal{L}^a\|^2_{\mathcal{H}_{\theta^{-1}}^0\to \mathcal{H}_{\theta^{-1}}^0}&
\le 2\sum_{k>0, j\neq0}\theta^{2 (-|k|+|j|)} \lambda^{2|a|k}r^{2(k-j)} \\&\le  2 \sum_{k>0} \theta^{-2 k} r^{2k}\lambda^{2|a|k}\,\cdot \sum_{j\neq0} \theta^{2|j|} r^{-2j}.
\end{align*}
Given any $0<\theta<1$, we choose the radius $r$. By Lemma~\ref{Schwarz lemma} we have $\theta\lambda(\theta)<\theta$,
and $r\mapsto r\lambda(r)$ is continuous, so we may fix
\begin{equation}\label{eq:choice of r}
r\in(\theta,1)\qquad\text{with}\qquad r\lambda(r)<\theta .
\end{equation}
Since $\lambda^{|a|}\le \lambda$ for any $|a|\ge 1$, we have $r\lambda^{|a|}< \theta$ for any $|a|\ge 1$. Thus, both series converge for any $|a|\ge1$. Applying the geometric series formula twice, we get for $|a|>0$ 
\begin{align*}
\|\mathcal{L}^a\|^2_{\mathcal{H}_{\theta^{-1}}^0\to \mathcal{H}_{\theta^{-1}}^0}&
\le 2 \cdot\frac{\theta^2}{\theta^2-r^2\lambda^{2|a|}}\,\cdot\frac{r^{2}\lambda^{2|a|}}{\theta^{2}}\cdot \left(\frac{\theta^{2}}{r^{2}-\theta^{2}}+\frac{\theta^{2}r^{2}}{1-\theta^{2}r^{2}}\right)\\&\le \frac{2\theta^2r^{2}}{\theta^2-r^2\lambda^{2}}\,\cdot \left(\frac{1}{r^{2}-\theta^{2}}+\frac{r^{2}}{1-\theta^{2}r^{2}}\right)\cdot \lambda^{2|a|}
\end{align*}
Therefore, for every $a\in\mathbb{F}_N^+$,
    \[
   \|\mathcal{L}^a\|_{\mathcal{H}_{\theta^{-1}}^0\to \mathcal{H}_{\theta^{-1}}^0}\le C e^{-\kappa|a|},
    \]
    where
    \begin{equation}\label{constants for exponential mixing}
    C=\sqrt{\frac{2r^{2}\theta^2}{\theta^2-r^2\lambda^{2}}\left(\frac{1}{r^{2}-\theta^{2}}+\frac{r^{2}}{1-\theta^{2}r^{2}}\right)}>0,
    \qquad \kappa=-\log{\lambda}>0,
    \end{equation}
    with $r$ as in \eqref{eq:choice of r} and $\lambda=\lambda(r)$ as in Lemma~\ref{Schwarz lemma}.
\end{proof}

\section{Sharpness of the Exponent
\texorpdfstring{$\gamma=\log p$}{gamma = log p}}\label{sharp-and-rigid}

Following our model example in Proposition~\ref{multiplication-by-2-is-DEM}, we prove the following statement.
\begin{theorem}[Sharpness of the exponent in double exponential rate]\label{thm:sharpness}
Let $\phi\in\mathcal{B}^0$ have local degree $p\ge2$ at the origin. There exist
$f,g\in\mathcal{H}_\theta$ and constants $C'>0$, $\beta'>0$ such that
\begin{equation}\label{eq:headline}
\mathcal{C}_{f,g}(n)\ \ge\ C'e^{-\beta' e^{\gamma n}}\|f\|_\theta\|g\|_\theta,
\qquad \gamma=\log p,
\end{equation}
for infinitely many $n$. 
\end{theorem}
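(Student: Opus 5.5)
Following the proof of sharpness in Proposition~\ref{multiplication-by-2-is-DEM}, I would take $f(z)=z$, so that $\mathcal{C}_{f,g}(n)=\bigl|\langle \phi^n, g\rangle_{L_2}\bigr|$ up to the mean term. Since $\phi(0)=0$, both $\int f\,d\mu$ and $\int \phi^n\,d\mu$ vanish, so the mean term drops. Write $\phi(z)=c z^p+O(z^{p+1})$ with $c=\phi^{(p)}(0)/p!\neq0$, and $|c|\le 1$ by the Parseval identity used in the proof of Part~1 of Theorem~\ref{classification}. By induction, $\phi^n(z)=c_n z^{p^n}+O(z^{p^n+1})$, where
\[
c_n=c^{1+p+\dots+p^{n-1}}=c^{(p^n-1)/(p-1)} .
\]
Thus the lowest Fourier coefficient of $\phi^n$ sits exactly at $p^n$, consistent with Lemma~\ref{vanishing coefficients}, and it has modulus $|c|^{(p^n-1)/(p-1)}$. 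This is itself double exponential with exponent $\log p$, because $\log|c|^{-1}/(p-1)$ is a finite constant.

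The key choice is $g$. If I take $g(z)=\sum_{k\ge1} k^{-1}\theta^{k} z^{k}$ as in the motivating example, then
\[
\langle\phi^n,g\rangle=\sum_{m\ge p^n}(\phi^n)_m\,\theta^m/m ,
\]
and higher coefficients of $\phi^n$ might cancel the leading term. To avoid this, I would use a $g$ whose coefficients are supported on a very sparse set $\{p^{n}: n\in S\}$, for instance $g(z)=\sum_{n\in S} \theta^{p^n} z^{p^n}/p^{n}$, together with a tail estimate. The contribution of the Fourier modes $m>p^n$ is bounded by $\sum_{n'\in S,\,n'>n}|(\phi^n)_{p^{n'}}|\,\theta^{p^{n'}}$. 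Each coefficient of $\phi^n$ is at most $1$ in modulus, since $|\phi^n|=1$ on the circle, so this sum is at most about $\theta^{p^{n'}}$ for the next element $n'$ of $S$. This is much smaller than the main term $|c|^{(p^n-1)/(p-1)}\theta^{p^n}/p^n$ provided $\theta^{p^{n'}-p^n}\ll |c|^{p^n/(p-1)}p^{-n}$. That holds once $n'-n$ exceeds a fixed constant depending on $|c|,\theta,p$. So I take $S=\{Mk: k\ge1\}$ with $M$ large; this is exactly why the statement says ``for infinitely many $n$''.

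With these choices, for $n\in S$ I get
\[
\mathcal{C}_{f,g}(n)\ge \tfrac12 |c|^{(p^n-1)/(p-1)}\theta^{p^n}p^{-n}.
\]
For some $\beta'>-\log\theta+\log|c|^{-1}/(p-1)$, the right-hand side is at least $e^{-\beta' e^{n\log p}}$, after absorbing the factor $p^{-n}$ into the double exponential. Since $\|f\|_\theta=\theta^{-1}$ and $\|g\|_\theta^2=\sum_{n\in S}p^{-2n}<\infty$, this yields \eqref{eq:headline} with a suitable $C'$.

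The main obstacle is the tail/cancellation estimate in the second paragraph. I expect the crude bound $|(\phi^n)_m|\le1$ to suffice, given the super-exponential gap $\theta^{p^{n'}-p^n}$ with $n'\ge n+M$. If it does not, the fallback is to pass to radius $r<1$ using Cauchy estimates and the fact that $|\phi^n|\le1$ in the disk.
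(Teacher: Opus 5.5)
Your proposal is correct and is essentially the paper's proof. It takes $f(z)=z$, a lacunary $g$ supported on the frequencies $p^{Mk}$ with $M$ large enough that $\theta^{p^M}<\theta|c|^{1/(p-1)}$ (the same condition your requirement on $n'-n$ reduces to), uses the leading coefficient $c^{(p^n-1)/(p-1)}$ of $\phi^n$ as the main term for $n\in M\mathbb{N}$, and shows the tail is double-exponentially smaller. The only cosmetic differences are your weights $p^{-n}$ in place of the paper's $1/l$, and your tail bound via $|(\phi^n)_m|\le 1$ and the triangle inequality instead of Cauchy--Schwarz with $\|\phi^n\|_{L_2}=1$.
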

\begin{proof} If $\phi$ has the local degree $p$ at $z=0$, then $\phi^{(p)}(0) \neq 0$ and \begin{align*}
  \phi(z)&=\sum_{k=p}^\infty \frac{\phi^{(k)}(0)}{k!}z^k\\&=c_pz^p+O(z^{p+1}),
  \end{align*}
  where \[c_p:=\frac{\phi^{(p)}(0)}{p!}.\]
Its $n$-fold composition with itself is then \begin{align}\label{fourier-coef-of-iteration}
\phi^{n}(z)&=c_p^{1+p+p^2+\dots+p^{n-1}} z^{p^n}+O(z^{p^n+1})\nonumber\\&=c_p^{\frac{p^n-1}{p-1}} z^{p^n}+O(z^{p^n+1}).
\end{align}
Let $\theta_*$ be defined as 
\begin{equation}\label{eq:theta_*}
\theta_*:=\theta\,|c_p|^{\frac{1}{p-1}}.
\end{equation}
Because $|\phi|=1$ on $\mathbb S^1$, Parseval gives
\[
\sum_{k\ge p}|c_k|^2=1.
\]
Hence $0<|c_p|\le1$, and therefore
\[
0<\theta_*:=\theta|c_p|^{1/(p-1)}\le\theta<1.
\]
Choose $M\in\mathbb{N}$ so that
\begin{equation}\label{eq:choice of M}
\theta^{\,p^{M}}<\theta_* ,
\end{equation}
which is possible since $\theta<1$ and $\theta_*>0$, and set
\[
f(z)=z,
\qquad
g(z)=\sum_{l\ge1}\frac{1}{l}\,\theta^{\,p^{lM}}z^{\,p^{lM}}.
\]
Then $f,g\in\mathcal{H}_\theta$ with \[\|f\|_\theta=\theta^{-1} \quad \text{and}\quad
\|g\|_\theta=\sqrt{\sum_{l\ge1} \frac{1}{l^2}}=\pi/\sqrt6.\]

The series defining $g$ is lacunary: it is supported on the sparse set of
frequencies $\{p^{lM}\}_{l\ge1}$. For the Bernoulli map $\phi(z)=z^{2}$ from Proposition~\ref{multiplication-by-2-is-DEM}, precomposition sends a character to a
character, so in~\eqref{one character survives} only one mode survives in the correlation function. In the nonlinear case $f\circ\phi^{\,n}$ spreads over infinitely
many modes, so we choose $g$ lacunary to make the tail negligible compared to
the leading term. 

Now fix $n\in M\mathbb{N}$ and let $i:=n/M$. Since $\int f\,d\mu=0$,
\[
\mathcal{C}_{f,g}(n)=\Bigl|\sum_{l\ge1}\phi^{\,n}_{p^{lM}} \,\overline{g_{p^{lM}}}\Bigr| .
\]
We treat the three ranges of $l$ separately.

If $l<i$ then $p^{lM}<p^{n}$, and $\phi^{\,n}(z)=O(z^{p^n})$, so $\phi^{\,n}_{p^{lM}}=0$.

If $l=i$ the term equals
\[
c_p^{\frac{p^n-1}{p-1}}\frac{1}{i}\theta^{\,p^{n}}
=c_p^{\frac{p^n-1}{p-1}}\,\theta^{\,p^{n}}\frac{M}{n}.
\]

If $l>i$, then by the Cauchy--Schwarz inequality and $\|\phi^{\,n}\|_{L_2}=1$,
\[
\Bigl|\sum_{l>i}\phi^{\,n}_{p^{lM}}\,\overline{g_{p^{lM}}}\Bigr|
\le\sqrt{\sum_{l>i}\bigl|\phi^{\,n}_{p^{lM}}\bigr|^2}
\sqrt{\sum_{l>i}\frac{\theta^{2p^{lM}}}{l^2}}
\le \sqrt{\frac{M}{n}}\theta^{\,p^{\,n+M}} .
\]
Therefore, using \eqref{eq:theta_*}, we get
\begin{equation}\label{eq:main minus tail}
\mathcal{C}_{f,g}(n)\ \ge\ \frac{M}{n}\,|c_p|^{-\frac{1}{p-1}}\,\theta_*^{\,p^n}\ -\ \sqrt{\frac{M}{n}}\bigl(\theta^{\,p^{M}}\bigr)^{p^n}.
\end{equation}

Both terms in \eqref{eq:main minus tail} are of the form (base)$^{p^n}$, up to a power
factor in $n$. By the choice of $M$
\eqref{eq:choice of M} we have $\theta^{\,p^{M}}<\theta_*$, hence
\[
\frac{\bigl(\theta^{\,p^{M}}\bigr)^{p^n}}{\theta_*^{\,p^n}}
=\Bigl(\frac{\theta^{\,p^{M}}}{\theta_*}\Bigr)^{p^n}\longrightarrow0
\]
double exponentially, which dominates the factor $\sqrt{n/M}$. Consequently, the tail in
\eqref{eq:main minus tail} is at most half the main term for all large $n$, and
\[
\mathcal{C}_{f,g}(n)\ \ge\ \frac{M}{2n}\,|c_p|^{-\frac{1}{p-1}}\,\theta_*^{\,p^n} 
\]
for all large $n\in M\mathbb{N}$.
Finally, the factor $M/n$ is absorbed into any $\beta'>\beta_*=\log\frac1{\theta_*}$,
since $\frac{M}{n}\theta_*^{\,p^n}\ge e^{-\beta'p^n}$ for all large $n$. Recalling
$p^n=e^{\gamma n}$ completes the proof.
\end{proof}

\begin{corollary}[Rigidity in fixed degree]\label{cor:rigidity fixed degree}
Assume that $\phi\in\mathcal{B}^0$ is a degree $q\ge2$ map and that it is mixing
at a double exponential rate with $\gamma=\log q$. Then $\phi(z)=e^{i\psi}z^{q}$
for some $\psi\in[0,2\pi)$.
\end{corollary}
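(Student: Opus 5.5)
The plan is to combine the sharpness result with the elementary fact that the local degree at a zero cannot exceed the global degree. Suppose $\phi\in\mathcal{B}^0$ has degree $q\ge2$ and is mixing at a double exponential rate with exponent $\gamma=\log q$.

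First, by part 3 of Theorem~\ref{classification}, $\phi'(0)=0$. So $\phi$ has some local degree $p\ge2$ at the origin. Write
\[
\phi(z)=e^{i\psi}\prod_{i=1}^q\frac{z-\lambda_i}{1-\overline{\lambda_i}z}.
\]
The local degree $p$ equals the number of indices $i$ with $\lambda_i=0$, so $p\le q$. Moreover, $p=q$ holds exactly when all $\lambda_i=0$, that is, when $\phi(z)=e^{i\psi}z^q$.

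It remains to show that $p<q$ is incompatible with the exponent $\log q$. Suppose $p<q$. Theorem~\ref{thm:sharpness} gives $f,g\in\mathcal{H}_\theta$ and constants $C',\beta'>0$ such that
\[
\mathcal{C}_{f,g}(n)\ge C'e^{-\beta'p^n}\|f\|_\theta\|g\|_\theta
\]
for infinitely many $n$. The assumed double exponential mixing with $\gamma=\log q$ gives
\[
\mathcal{C}_{f,g}(n)\le Ce^{-\beta q^n}\|f\|_\theta\|g\|_\theta
\]
for all $n$. Note that $f=z$ is nonzero, and $g$ is a nonzero lacunary series, so $\|f\|_\theta\|g\|_\theta>0$. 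Combining the two bounds yields
\[
C'e^{-\beta'p^n}\le Ce^{-\beta q^n}
\]
along a sequence $n\to\infty$. Rearranging,
\[
\beta q^n-\beta'p^n\le\log(C/C').
\]
Since $q>p$, the left side behaves like $q^n\bigl(\beta-\beta'(p/q)^n\bigr)\to\infty$, which is a contradiction. Hence $p=q$, and $\phi(z)=e^{i\psi}z^q$.

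There is one point to check. Definition~\ref{definition of double exponential mixing} asks for the bound for some $\theta$, while Theorem~\ref{thm:sharpness} is stated on a given $\mathcal{H}_\theta$. By Lemma~\ref{lm:theta independence}, double exponential mixing with exponent $\gamma$ holds for every $\theta\in(0,1)$ with the same $\gamma$. So we may run the sharpness construction in the same space where the upper bound holds.

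The main obstacle is conceptual rather than computational: making sure that \emph{attaining} $\gamma=\log q$ really contradicts the lower bound. This works only because the lower bound in Theorem~\ref{thm:sharpness} decays with base $p$ in the inner exponent. The constants $\beta,\beta'$ may differ, but they cannot compensate for the gap between $p^n$ and $q^n$.
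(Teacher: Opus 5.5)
Your proposal is correct and follows the paper's own argument: the local degree satisfies $p\le q$; the sharpness theorem rules out any exponent larger than $\log p$, so attaining $\log q$ forces $p=q$; and then all $q$ zeros sit at the origin. You also make explicit two points the paper leaves implicit, namely the comparison of $p^n$ against $q^n$ and the use of $\theta$-independence to place the upper and lower bounds in the same space.
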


\begin{proof}
Let $p$ be the local degree of $\phi$ at the origin. We have $p\le q$. By
\eqref{constants for double exponential mixing} the exponent $\log p$ is
attained, and by Theorem~\ref{thm:sharpness} no larger exponent is.
Since the hypothesis says that $\log q$ is attained, $q\le p$. Thus, the local degree equals the topological degree, so all $q$ zeros of
$\phi$ lie at the origin and $\phi(z)=e^{i\psi}z^{q}$.
\end{proof}

\section{Failure of \texorpdfstring{$C^1$}{C1}-stability of Mixing at Double Exponential Rate}\label{examples} 
The degree two maps $\phi\in\mathcal{B}^0$ are of the form
\[
\phi(z)=e^{i\psi}z\,\frac{z-\lambda}{1-\bar{\lambda}z},
\qquad |\lambda|<1,\ \psi\in[0,2\pi),
\]
so that $|\phi'(0)|=|\lambda|$. By
Corollary~\ref{intro:classification on S1}, such a map is mixing at a double
exponential rate if and only if $\lambda=0$, that is, if and only if it is the
\textit{affine} map $\phi(z)=e^{i\psi}z^{2}$; this is the smallest case of
Corollary~\ref{cor:rigidity fixed degree}. In degree two the double exponential rate is
therefore rigid: a map in $\mathcal{B}^0$ that is close to $z^{2}$ has the same
degree, hence is again of the above form, so the only nearby maps retaining the rate are
the affine maps $e^{i\psi}z^{2}$ themselves.

In degree three the picture is different, and both behaviors occur arbitrarily close to the affine model, as the following example shows.

\begin{prop}\label{1d examples}
For $0< |\lambda|<1$, let \vspace{-3mm}\[
\phi_1(z):=z^2\frac{z-\lambda}{1-\overline{\lambda} z} \quad \text{and} \quad \phi_2(z):=z\left(\frac{z-\lambda}{1-\overline{\lambda} z}\right)^2. 
\]
Then $\phi_1$ is mixing at a double exponential rate, but  $\phi_2$ is not. 
For small $\lambda$, both $\phi_1$ and $\phi_2$ are $C^1$-perturbations of \[\phi_0:z\mapsto z^3, \] that is, $d_{C^1}(\phi_{i}, \phi_0)\le C |\lambda|$ for both $i=1,2$. 
However, neither $\phi_1$ nor $\phi_2$ is $C^1$-conjugate to $\phi_0$. 
\end{prop}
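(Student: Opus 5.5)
The statement has three claims. I would prove them in order: the mixing dichotomy, the $C^1$-closeness, and the failure of $C^1$-conjugacy.

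\emph{Mixing dichotomy.} This comes straight from Corollary~\ref{intro:classification on S1} with $w=0$. Both maps are in $\mathcal{B}^0$, since $\phi_1(0)=\phi_2(0)=0$. For $\phi_1=z^2\cdot\frac{z-\lambda}{1-\bar\lambda z}$ the factor $z^2$ gives $\phi_1'(0)=0$, so $\phi_1$ mixes at a double exponential rate; its local degree at $0$ is exactly $2$ because $\lambda\neq0$. For $\phi_2$ we compute $\phi_2'(0)=\lambda^2$, and $0<|\lambda^2|<1$. So $\phi_2$ mixes exponentially but not double exponentially. Lemma~\ref{lower bounds on exponential mixing} even gives the explicit lower bound $|\lambda|^{2n}$.

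\emph{$C^1$-closeness.} Write $b_\lambda(z)=\frac{z-\lambda}{1-\bar\lambda z}$. On a fixed closed annulus around $\mathbb{S}^1$ where $|\bar\lambda z|\le 1/2$ for small $\lambda$, we have
\[
b_\lambda(z)-z=\frac{\bar\lambda z^2-\lambda}{1-\bar\lambda z}.
\]
Hence $\sup|b_\lambda-z|\le C|\lambda|$, and by the Cauchy estimates the derivative satisfies the same kind of bound. Then
\[
\phi_1-z^3=z^2(b_\lambda-z),\qquad \phi_2-z^3=z(b_\lambda-z)(b_\lambda+z),
\]
and the product rule gives $d_{C^1}(\phi_i,\phi_0)\le C|\lambda|$ on $\mathbb{S}^1$.

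\emph{No $C^1$-conjugacy.} I would use a multiplier invariant at periodic points. If $h$ is a $C^1$-diffeomorphism of the circle with $h\circ\phi_i=\phi_0\circ h$, then corresponding periodic orbits have equal multipliers $|(\phi^n)'|$. For $\phi_0=z^3$ every periodic orbit of period $n$ has multiplier $3^n$, so $\log|\phi_0'|$ is cohomologous to the constant $\log 3$.

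For a Blaschke product $\phi$ in $\mathcal{B}^0$, the angular derivative at $z\in\mathbb{S}^1$ is $|\phi'(z)|=\sum_i P(z,\lambda_i)$, where $P(z,\lambda)=\frac{1-|\lambda|^2}{|z-\lambda|^2}$ is the Poisson kernel. This kernel is non-constant as soon as some zero $\lambda_i\neq0$. Fixed points suffice here:
\begin{itemize}
\item The fixed point $z=1$ of $\phi_0$ has multiplier $3$.
\item For $\phi_1$ (and $\phi_2$ similarly), the multiplier at a fixed point $\zeta\in\mathbb{S}^1$ is $2+P(\zeta,\lambda)$ (respectively $1+2P(\zeta,\lambda)$).
\item This equals $3$ only if $P(\zeta,\lambda)=1$, which means $|\zeta-\lambda|^2=1-|\lambda|^2$.
\end{itemize}

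Since $\phi_i$ has degree $3$, it has $2$ fixed points on the circle. The main obstacle is to show that not all of them satisfy this condition. Most likely $\lambda$-generic choices work, but a cleaner route is an averaging argument. If $h$ is a $C^1$ conjugacy, the $\phi_i$-invariant measure of maximal entropy pulls back to Lebesgue measure for $\phi_0$, and Lebesgue measure $\mu$ is $\phi_i$-invariant (Lemma~\ref{lm:Leb meas preserv}) and absolutely continuous. Equal Lyapunov exponents would then force
\[
\int\log|\phi_i'|\,d\mu=\log 3.
\]
But $\int\log P(z,\lambda)\,d\mu<\log\int P\,d\mu=0$ by strict Jensen. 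Applying Jensen to the whole sum, $\int\log|\phi_i'|\,d\mu<\log\int|\phi_i'|\,d\mu=\log 3$, with strict inequality because $|\phi_i'|$ is non-constant. To close the argument I still need that the measure of maximal entropy of $\phi_i$ is absolutely continuous under the conjugacy, which is automatic when $h$ is $C^1$ with non-vanishing derivative, since $h_*$ then preserves absolute continuity. This gives the contradiction.
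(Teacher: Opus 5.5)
Your proof is correct, and for the non-conjugacy it takes a genuinely different route from the paper. The mixing dichotomy is the paper's argument. Your $C^1$ bound via $\phi_1-z^3=z^2(b_\lambda-z)$ and $\phi_2-z^3=z(b_\lambda-z)(b_\lambda+z)$ is only a cosmetic variant of the paper's expansion in $\lambda$.

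\textbf{The paper's route.} It combines Shub's topological conjugacy with the Shub--Sullivan theorem: an absolutely continuous conjugacy between finite Blaschke products is M\"obius. A M\"obius conjugacy must fix the interior fixed point $0$, so it preserves the local degree there, which is $3,2,1$ for $\phi_0,\phi_1,\phi_2$.

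\textbf{Your route.} You use a Lyapunov-exponent obstruction, and its cleanest form is already in your text. Differentiating $h\circ\phi_i=\phi_0\circ h$ gives $\log|\phi_i'|=\log 3+u-u\circ\phi_i$ with $u=\log|h'|$ continuous. Integrating against the $\phi_i$-invariant measure $\mu$ then gives $\int\log|\phi_i'|\,d\mu=\log 3$. This contradicts strict Jensen, because $|\phi_i'|=\sum_kP(\cdot,\lambda_k)$ has $\mu$-mean $3$ and is non-constant.

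\textbf{The measure-of-maximal-entropy detour.} The direct integration above makes this detour unnecessary. As you wrote it, the detour also tacitly uses uniqueness of the absolutely continuous invariant measure of the expanding map $\phi_i$, in order to identify the pulled-back measure with $\mu$.

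\textbf{What each route buys.} Yours is elementary and self-contained, and it gives more in one direction. The function $\sum_kP(\cdot,\lambda_k)$ is constant only when all power sums $\sum_k\lambda_k^n$, $n\ge1$, vanish, that is, when all $\lambda_k=0$. Hence any map in $\mathcal{B}^0$ of degree $q$ that is $C^1$-conjugate to $z^q$ equals $e^{i\psi}z^q$, a $C^1$ counterpart of Corollary~\ref{cor:rigidity fixed degree}. The paper's route rests on a deep external theorem, but it excludes even absolutely continuous conjugacies. Its invariant, the local degree at the fixed point, also separates $\phi_1$ from $\phi_2$ and is exactly the quantity behind $\gamma=\log p$.
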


These examples show that mixing at a double exponential rate is \textit{not a $C^1$-stable} property, since any $C^1$-neighborhood of $z^3$ contains maps of both types: those that are mixing at a double exponential rate and those that are not. Due to the structural stability (see \cite{Shub87}), there is a $C^1$-neighborhood of $z^3$ in which every Lebesgue measure preserving $C^{1+\alpha}$ map is exponentially mixing again.

Moreover, Proposition~\ref{1d examples} shows that there exist \textit{nonlinear} maps that mix at a double exponential rate and are not $C^1$-conjugate to affine maps. In other words, if the exponent is not prescribed, the rate is not rigid: it determines nothing beyond the local degree at the
fixed point. Rigidity is recovered exactly when the exponent is required to be
maximal for the degree, which is
Corollary~\ref{cor:rigidity fixed degree}.

\begin{proof}
  By Corollary~\ref{intro:classification on S1}, for non-zero $\lambda$, $\phi_1$ is mixing at a double exponential rate but $\phi_2$ is not. Let us note that both are exponentially mixing on H\"older observables since they are uniformly expanding maps.

  By taking $\lambda\to 0$, we can see that both $\phi_1$ and $\phi_2$ are one-parameter deformations of the circle endomorphism \[\phi_0: z\mapsto z^3,\]
  which is mixing at a double exponential rate.

  In fact, for small $|\lambda|$, $\phi_1$ and $\phi_2$ can be viewed as $C^1$-perturbations of $\phi_0$. We will show that there exists $C>0$ such that
\begin{equation}\label{C^1 estimates}
d_{C^1}(\phi_{i},\phi_0) := \sup_{z\in\mathbb{S}^1} |\phi_i(z)-\phi_0(z)| + \sup_{z\in\mathbb{S}^1} |\phi_i'(z) - \phi_0'(z)|\le C|\lambda|
\end{equation}
for any $i=1,2$.
Since
\begin{align*}
\phi_1(z)&=z^2 (z-\lambda)\left(1 + \overline{\lambda} z + O(|\lambda|^2)\right) \\&= z^3 - \lambda z^2 + \overline{\lambda} z^4 + O(|\lambda|^2),
\end{align*}
then
\[
\sup_{z\in\mathbb{S}^1}|\phi_1(z) - \phi_0(z)| =\sup_{z\in\mathbb{S}^1}| - \lambda z^2 + \overline{\lambda} z^4 + O(|\lambda|^2)|\le C_0|\lambda|.
\]
Next, we compute the derivative 
\begin{align*}
\phi_1'(z) &= 2 z \frac{z-\lambda}{1-\overline{\lambda} z} + z^2 \frac{1 - |\lambda|^2}{(1-\overline{\lambda} z)^2}\\&=2 z (z-\lambda)(1 + \overline{\lambda} z + O(|\lambda|^2)) + z^2 (1 - |\lambda|^2)(1 + 2 \overline{\lambda} z + O(|\lambda|^2))\\&=
3z^2-2\lambda z+4\bar{\lambda} z^3+O(|\lambda|^2),
\end{align*}
so
\begin{align*}
\sup_{z\in\mathbb{S}^1} |\phi_1'(z) - \phi_0'(z)| = \sup_{z\in\mathbb{S}^1}|- 2 \lambda z + 4 \overline{\lambda} z^3 + O(|\lambda|^2)|
\le C_1|\lambda|.
\end{align*}
We get similar estimates for $\phi_2$, thus proving \eqref{C^1 estimates}. In fact, they are $C^{k}$-perturbations of $\phi_{0}$ for any $k$,
$C^{\infty}$-perturbations, and $C^{\omega}$-perturbations.

Now we are going to prove that $\phi_1$ and $\phi_2$ are not $C^1$-conjugate to $z^3$. In fact, we are proving more: they are not even absolutely continuously conjugate. Note that $\phi_{1}$ and $\phi_2$ are topologically conjugate to $\phi_0$ (see \cite{Shub69}); that is, there exist homeomorphisms $h_i:\mathbb{S}^1\to\mathbb{S}^1$ such that
  \[
  \phi_{i}\circ h_i=h_i\circ \phi_0
  \]
  for any $i=1,2$.
  From the work of Shub and Sullivan \cite{SS85}, it follows that if two finite Blaschke products are absolutely continuously conjugate, then they are conjugate by M\"obius transformation
  \[h_i(z)=e^{i\psi_i} \frac{z-a_i}{1-\overline{a_i}z}\]
  for some $|a_i|<1$ and $ \psi_i\in[0,2\pi), i=1,2.$ 
  %theta?
  It is a conformal map, and composition with a conformal map preserves local degree. Note that local degrees of $\phi_0,\phi_1,\phi_2$ at $z=0$ are $3$, $2$, and $1$ respectively. Thus, neither $\phi_1$ nor $\phi_2$ is $C^1$-conjugate to $\phi_0$.
\end{proof}
This example can be immediately generalized to the following statement.

\begin{prop}\label{nowhere dense}
Within the space of Lebesgue measure preserving Blaschke products $\mathcal{B}^0$, the ones that are mixing at a double exponential rate form a nowhere dense set in the $C^1$-topology on $\mathbb{S}^1$.
\end{prop}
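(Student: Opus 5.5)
We need to prove two things: the set $\mathcal{D}:=\{\phi\in\mathcal{B}^0:\phi'(0)=0\}$ is nowhere dense in the $C^1$-topology on $\mathbb{S}^1$. By Corollary~\ref{intro:classification on S1} (normalized case $w=0$), this set is exactly the set of maps in $\mathcal{B}^0$ that mix at a double exponential rate. Nowhere density means that the closure of $\mathcal{D}$ has empty interior, so the plan is to show two facts. First, $\mathcal{D}$ is closed in $\mathcal{B}^0$ with respect to $C^1$. Second, every $\phi\in\mathcal{D}$ is a $C^1$-limit of maps in $\mathcal{B}^0\setminus\mathcal{D}$.

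\emph{Closedness.} The map $\phi\mapsto\phi'(0)$ is continuous even in the $C^0$ topology on $\mathbb{S}^1$. Indeed, by the Cauchy formula,
\[
\phi'(0)=\frac{1}{2\pi i}\int_{\mathbb{S}^1}\phi(z)\,z^{-1}\,\frac{dz}{z},
\]
the first Fourier coefficient of the boundary values, so
\[
|\phi'(0)-\tilde\phi'(0)|\le\sup_{\mathbb{S}^1}|\phi-\tilde\phi|\le d_{C^1}(\phi,\tilde\phi).
\]
Hence $\mathcal{D}$ is the zero set of a continuous functional and is closed. It therefore suffices to show that $\mathcal{D}$ has empty interior.

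\emph{Approximation from outside.} Take $\phi\in\mathcal{D}$ of degree $q\ge2$. Write it as
\[
\phi(z)=e^{i\psi}z^{p}\prod_{i=1}^{q-p}\frac{z-\lambda_i}{1-\overline{\lambda_i}z},
\]
where $p\ge2$ is the local degree at $0$ and $\lambda_i\neq0$. I would generalize the construction of $\phi_2$ in Proposition~\ref{1d examples} by moving all but one of the $p$ zeros at the origin to a small $\epsilon\neq0$:
\[
\phi_\epsilon(z):=e^{i\psi}z\left(\frac{z-\epsilon}{1-\bar\epsilon z}\right)^{p-1}\prod_{i=1}^{q-p}\frac{z-\lambda_i}{1-\overline{\lambda_i}z}.
\]
Then $\phi_\epsilon(0)=0$, so $\phi_\epsilon\in\mathcal{B}^0$, and its local degree at $0$ is $1$. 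Hence $\phi_\epsilon'(0)\neq0$, and $\phi_\epsilon\notin\mathcal{D}$. (Moving just one zero would also suffice, but moving $p-1$ zeros gives a uniform construction.)

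\emph{The estimate.} It remains to show $d_{C^1}(\phi_\epsilon,\phi)\le C|\epsilon|\to0$. This is the only computational step, and I expect it to be routine, exactly as in \eqref{C^1 estimates}. On $\mathbb{S}^1$, write $b_\epsilon(z)=\frac{z-\epsilon}{1-\bar\epsilon z}$. For $|\epsilon|\le1/2$ the functions $b_\epsilon$ and $b_\epsilon'$ are analytic on a fixed neighborhood of the closed disk. Also $|b_\epsilon-z|$ and $|b_\epsilon'-1|$ are $O(|\epsilon|)$ uniformly on $\mathbb{S}^1$, and $|b_\epsilon|=1$ there. Then
\[
\phi_\epsilon-\phi=e^{i\psi}z\bigl(b_\epsilon^{\,p-1}-z^{p-1}\bigr)B(z),
\]
where $B$ is the fixed remaining product. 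Telescoping $b_\epsilon^{p-1}-z^{p-1}$ and applying the product rule to the derivative, with all factors bounded by constants depending on $\phi$, gives $d_{C^1}(\phi_\epsilon,\phi)\le C_\phi|\epsilon|$.

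Thus every $C^1$-neighborhood of a point of $\mathcal{D}$ meets the complement. Combined with closedness, $\mathcal{D}$ is nowhere dense. The only subtle point is making sure the topology is the relative $C^1$ topology on $\mathcal{B}^0$ rather than on all circle maps. The perturbation stays inside $\mathcal{B}^0$, so no issue arises.
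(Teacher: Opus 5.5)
Your proposal is correct and follows essentially the paper's proof: $\mathcal{D}$ is closed as the zero set of the continuous functional $\phi\mapsto\phi'(0)=\langle\phi,e_1\rangle_{L_2}$, and it has empty interior because replacing all but one of the factors $z$ by a nontrivial Blaschke factor with a small zero keeps $\phi(0)=0$, makes $\phi'(0)\neq0$, and moves $\phi$ only $O(|\epsilon|)$ in $C^1$ (your telescoping estimate makes explicit what the paper leaves as ``take $\lambda$ small''). One correction to your parenthetical aside: moving only one zero off the origin does \emph{not} suffice when $p\ge3$, because the remaining $p-1\ge2$ zeros at $0$ still force $\phi'(0)=0$; for example, $z^{2}\frac{z-\epsilon}{1-\bar\epsilon z}$ is the map $\phi_1$ of Proposition~\ref{1d examples}, which stays in $\mathcal{D}$.
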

\begin{proof} First, within the space of Lebesgue measure preserving Blaschke products $\mathcal{B}^0$, the ones that are mixing at a double exponential rate  form a closed set, let us denote it by $\mathcal{D}$. It follows from the fact that $\mathcal{D}$ is a kernel of a continuous linear functional, which can be seen from the Cauchy formula
\[
\phi'(0)=\frac{1}{2\pi i} \int_{\mathbb{S}^1} \phi(z)\frac{dz}{z^2}=\int_{\mathbb{S}^1} \phi(z) \, \overline{z}\, d \mu=\langle \phi,e_1\rangle_{L_2},
\]
where $e_1(z)=z$. Next, we show that for any $\phi\in\mathcal{D}$, any of its $C^1$-neighborhoods will contain a map from $\mathcal{B}^0\setminus\mathcal{D}$. 
    Any map $\phi\in\mathcal{D}$ mixing at a double exponential rate  has the form \[
    \phi(z)=e^{i\psi}z\prod_{i=1}^q\frac{z-\lambda_i}{1-\overline{\lambda_i}z},\]
    where $\psi\in[0,2\pi),\lambda_1=0, |\lambda_i|<1, i=1,\dots,q,\, q\in\mathbb{Z}_{\ge1}$. Now, deform each factor of $z$ into a non-trivial Blaschke factor $\frac{z-\lambda}{1-\overline{\lambda} z}$, except for one. Thus, the new map $\phi_\lambda(0)=0$ still preserves the Lebesgue measure. On the other hand, $\phi_\lambda'(0)\neq 0$, so it is not mixing at a double exponential rate anymore. By taking $\lambda$ small enough, we make sure that $\phi_\lambda$ is in the $C^1$-neighborhood of $\phi$. 
\end{proof}

 In~\cite{BN19}, Bandtlow and Naud use Blaschke products to prove a stronger result: there exists a dense set of analytic expanding maps of the circle with a nontrivial spectrum, which implies that these maps are not mixing at a double exponential rate.

%% file: quasi-nilpotent.tex
\section{Spectrum, Quasi-Nilpotency, and Resonances}\label{spec-qnilp-res}

\begin{theorem}\label{thm:spectrum}
Let $\phi\in\mathcal{B}^0$ and $\lambda:=\phi'(0)$, and assume that $|\lambda|<1$. Then for every
$0<\theta<1$, the spectrum of $\mathcal{L}$ on $\mathcal{H}_{\theta^{-1}}$ is
\[
\sigma(\mathcal{L};\mathcal{H}_{\theta^{-1}})
=\{0,1\}\cup\bigl\{\lambda^{m},\ \overline{\lambda}^{\,m}\ :\ m\ge1\bigr\},
\]
with $1$ a simple eigenvalue corresponding to a constant eigenfunction, and on
the invariant subspace
$\mathcal{H}^{0}_{\theta^{-1}}=\mathcal{H}_{\theta^{-1}}\ominus\mathbb{C}$
\[
\sigma(\mathcal{L};\mathcal{H}^{0}_{\theta^{-1}})
=\{0\}\cup\bigl\{\lambda^{m},\ \overline{\lambda}^{\,m}\ :\ m\ge1\bigr\}.
\]
If $\lambda\neq0$, the eigenvalue $\lambda^{m}$ has eigenfunction $\chi(z)^{\,m}$,
where $\chi$ is the K\"onigs function of $\phi$, and
$\overline{\lambda}^{\,m}$ has eigenfunction $\overline{\chi(z)}^{\,m}$. These
eigenfunctions belong to $\mathcal{H}_{\theta^{-1}}$ for \emph{every}
$0<\theta<1$, while none of them belongs to $L_2(\mathbb{S}^1,\mu)$.
\end{theorem}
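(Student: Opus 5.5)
The plan is to copy the structure of the spectral argument for the doubling map in Section~\ref{motivating example}. Split
\[
\mathcal{H}_{\theta^{-1}}=\mathcal{H}^+_{\theta^{-1}}\oplus\mathbb{C}\oplus\mathcal{H}^-_{\theta^{-1}},
\]
where $\mathcal{H}^{\pm}_{\theta^{-1}}$ is spanned by $e_k$ with $\pm k>0$, and check that each summand is $\mathcal{L}$-invariant.

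\textbf{Invariance.} For $k>0$, $\phi^k$ is analytic in $\mathbb{D}$ with a zero of order at least $k$ at $0$. Hence $\mathcal{L}z^k=\phi^k=\lambda^k z^k+\sum_{j>k}c_{jk}z^j$. For $k<0$, the identity $\phi^{-k}=\overline{\phi^{|k|}}$ on $\mathbb{S}^1$ shows that $\mathcal{L}$ commutes with complex conjugation $Jf=\overline{f}$, and $J$ swaps $\mathcal{H}^{+}_{\theta^{-1}}$ and $\mathcal{H}^{-}_{\theta^{-1}}$. Therefore $\sigma(\mathcal{L};\mathcal{H}^-_{\theta^{-1}})=\overline{\sigma(\mathcal{L};\mathcal{H}^+_{\theta^{-1}})}$. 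The constants carry the eigenvalue $1$, which is simple because $1\ne\lambda^m,\overline{\lambda}^m$ when $|\lambda|<1$. So everything reduces to showing
\[
\sigma(\mathcal{L};\mathcal{H}^+_{\theta^{-1}})=\{0\}\cup\{\lambda^m: m\ge1\}.
\]

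\textbf{Compactness.} The Hilbert--Schmidt bound of Corollary~\ref{bounds using symmetry}, combined with Corollary~\ref{corollary from Schwarz lemma} for $|a|=1$ and $r$ chosen as in \eqref{eq:choice of r}, gives a finite Hilbert--Schmidt norm. This is exactly the computation in the proof of Part 2 of Theorem~\ref{norm bound}. Hence $\mathcal{L}$ is compact on $\mathcal{H}^0_{\theta^{-1}}$. Since the space is infinite dimensional, $0$ lies in the spectrum, and every nonzero spectral value is an eigenvalue.

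\textbf{Inclusion $\subseteq$.} Suppose $\mathcal{L}v=\mu v$ with $\mu\neq0$ and $0\ne v\in\mathcal{H}^+_{\theta^{-1}}$. Let $k\ge1$ be the smallest index with $v_k\neq0$; it exists because indices are bounded below. The lower-triangular form above gives $(\mathcal{L}v)_k=\lambda^k v_k$, so $\mu=\lambda^k$. One point needs care: the coefficient $(\mathcal{L}v)_k$ must be computed from the series $\sum_{l\ge k} v_l\phi^l$. Continuity of $\mathcal{L}$ and of coordinate functionals on $\mathcal{H}_{\theta^{-1}}$ justifies this.

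\textbf{Inclusion $\supseteq$ and eigenfunctions.} If $\lambda=0$, the set $\{\lambda^m\}$ is just $\{0\}$ and nothing more is needed. If $\lambda\ne0$, use Koenigs linearization: there is a $\chi$, analytic in $\mathbb{D}$, with $\chi(0)=0$, $\chi'(0)=1$ and $\chi\circ\phi=\lambda\chi$. Then $\chi^m\circ\phi=\lambda^m\chi^m$, and applying $J$ gives the eigenfunctions $\overline{\chi}^{\,m}$ for $\overline{\lambda}^m$.

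To see $\chi^m\in\mathcal{H}_{\theta^{-1}}$ for every $\theta$: analyticity in $\mathbb{D}$ gives Cauchy estimates $|(\chi^m)_k|\le C_r r^{-k}$ for every $r<1$. Taking $r\in(\theta,1)$ makes $\sum_k|(\chi^m)_k|^2\theta^{2k}$ converge. The identity $\mathcal{L}\chi^m=\lambda^m\chi^m$ holds in $\mathcal{H}_{\theta^{-1}}$ because it holds for the Taylor series on circles $|z|=\rho<1$, and the coefficients match.

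To see $\chi^m\notin L_2$: $\mathcal{L}$ is an isometry of $L_2(\mathbb{S}^1,\mu)$ since $\phi$ preserves $\mu$. If $\chi^m$ had $L_2$ boundary values, then $\|\chi^m\|_{L_2}=\|\mathcal{L}\chi^m\|_{L_2}=|\lambda|^m\|\chi^m\|_{L_2}$, which forces $\chi^m=0$, a contradiction. Here one identifies the $\mathcal{H}^2$ function with its coefficient sequence, which is consistent with how $\mathcal{L}$ acts on coefficients.

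\textbf{Main obstacle.} I expect the hardest step to be making the triangular-matrix argument rigorous for an infinite matrix. The plan is to rely on compactness, so that the nonzero spectrum consists of eigenvalues, together with the minimal-index argument above, rather than on any general theorem about triangular operators. Justifying that $\mathcal{L}$ acts coefficientwise on genuine hyperfunctions, via continuity and the density of polynomials, is the technical point to check.
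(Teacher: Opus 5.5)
Your proposal is correct and follows the paper's proof essentially step by step. Both use the same invariant splitting $\mathcal{H}^+_{\theta^{-1}}\oplus\mathbb{C}\oplus\mathcal{H}^-_{\theta^{-1}}$, compactness from the Hilbert--Schmidt bound behind Theorem~\ref{norm bound}, triangularity with diagonal entries $\lambda^k$, K\"onigs eigenfunctions controlled by Cauchy estimates, and the $L_2$-isometry argument excluding $\chi^m$ from $L_2$. The one difference is that the paper invokes the general fact that a compact triangular operator's spectrum is its diagonal plus $0$, whereas you prove the two inclusions directly: a minimal-index eigenvector for $\subseteq$, K\"onigs eigenfunctions for $\supseteq$, and the conjugation $J$ to pass to $\mathcal{H}^-_{\theta^{-1}}$. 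This makes the argument self-contained and justifies the coefficientwise action on hyperfunctions that the paper leaves implicit.
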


The description of the spectrum is due to Bandtlow, Just, and Slipantschuk \cite{BJS17}. 

\begin{proof}
First, there is an $\mathcal{L}$-invariant decomposition
\[
\mathcal{H}_{\theta^{-1}}=\mathcal{H}^{+}_{\theta^{-1}}\oplus\mathbb{C}\oplus\mathcal{H}^{-}_{\theta^{-1}},
\]
where $\mathcal{H}^{\pm}_{\theta^{-1}}$ is the subspace generated by $e_k$ with
$k>0$ ($k<0$ respectively): indeed $\mathcal{L}e_0=1$, while
$\mathcal{L}e_k=(\phi(z))^{k}$ is analytic and vanishes at the origin for $k>0$, and
$\mathcal{L}e_{-k}=\overline{\phi(z)^{k}}$ for $k>0$ because $|\phi|=1$ on
$\mathbb{S}^1$. Hence
\[
\sigma(\mathcal{L};\mathcal{H}_{\theta^{-1}})=\sigma(\mathcal{L};\mathcal{H}^{+}_{\theta^{-1}})
\cup\sigma(\mathcal{L};\mathbb{C})\cup\sigma(\mathcal{L};\mathcal{H}^{-}_{\theta^{-1}}),
\]
and $\sigma(\mathcal{L};\mathbb{C})=\{1\}$, with eigenfunction $e_0=1$.

The matrix of $\mathcal{L}$ restricted to $\mathcal{H}^{+}_{\theta^{-1}}$
is upper triangular, with diagonal entries
\[
\mathcal{L}_{kk}=\phi'(0)^{k}=\lambda^{k},\qquad k\ge1 .
\]
In the proof of Theorem~\ref{norm bound}, we in fact bound the Hilbert--Schmidt
norm of $\mathcal{L}$, so $\mathcal{L}$ is compact, and so is its restriction to
a closed invariant subspace. If a compact operator has upper triangular form,
its spectrum is the set of diagonal entries together with $0$. Therefore
\[
\sigma(\mathcal{L};\mathcal{H}^{+}_{\theta^{-1}})=\{0\}\cup\{\lambda^{m}:m\ge1\},
\qquad
\sigma(\mathcal{L};\mathcal{H}^{-}_{\theta^{-1}})=\{0\}\cup\{\overline{\lambda}^{\,m}:m\ge1\}.
\]

Let now $\lambda\neq0$. By K\"onigs theorem \cite[Ch.~6]{Sh93} the limit
\[\chi=\lim_{n\to\infty}\phi^{\,n}/\lambda^{n}\] exists locally uniformly on
$\mathbb{D}$ and satisfies \[\chi\circ\phi=\lambda\chi.\]%$\chi(0)=0$,$\chi'(0)=1$.
Consequently
$\mathcal{L}(\chi(z)^{m})=\chi(\phi(z))^{\,m}=\lambda^{m}\chi(z)^{\,m}$,
so $\chi(z)^{\,m}$ is an eigenfunction for $\lambda^{m}$, and
$\overline{\chi(z) }^{\,m}$ for $\overline{\lambda}^{\,m}$.

Let $0<\theta<1$. Since $\chi$ is analytic on $\mathbb{D}$, for
$\theta<r<1$ Cauchy Theorem gives the estimate for its Fourier coefficients 
\[|\chi_k|\le M(r)r^{-k}\] with
$M(r)=\sup_{|z|=r}|\chi(z)|$. So,
\[
\|\chi\|^{2}_{\theta^{-1}}=\sum_{k\ge1}|\chi_k|^{2}\theta^{2k}
\le M(r)^{2}\sum_{k\ge1}\left(\theta/r\right)^{2k}<\infty ,
\]
and the same holds for $\chi(z)^{\,m}$ and $\overline{\chi(z) }^{\,m}$. 

Since $\phi$ preserves $\mu$, $\mathcal{L}$ is an isometry of $L_2(\mathbb{S}^1,\mu)$. If $\chi$ were in $L_2$, then $\chi\circ\phi=\lambda\chi$
would give $\|\chi\|_{L_2}=|\lambda|\,\|\chi\|_{L_2}$, and $|\lambda|<1$ would
force $\chi=0$. The same holds for $\chi(z)^{\,m}$ and $\overline{\chi(z) }^{\,m}$.
\end{proof}

The last statement is another reason for working with hyperfunctions. The
eigenvalues $\lambda^{m}$ are carried by eigenfunctions lying in every space
$\mathcal{H}_{\theta^{-1}}$ and not in $L_2$. 

Operators with zero spectral radius are called \textit{quasi-nilpotent}. By
Theorem~\ref{thm:spectrum} this happens exactly when $\phi'(0)=0$, and then
$\sigma(\mathcal{L};\mathcal{H}^{0}_{\theta^{-1}})=\{0\}$. These
operators are \textit{non-self-adjoint}, so a trivial spectrum does not make them
trivial. Quasi-nilpotency implies \textit{super
exponential mixing} (see Corollary~\ref{super}). In general spectral theory, quasi-nilpotency does not imply mixing at a double exponential rate:  the classical Volterra operator $V$ on $L_2[0,1]$ is quasi-nilpotent, with
$\langle V^{n}1,1\rangle=\frac{1}{(n+1)!}$, a decay of order $e^{-n\log n}$ that
is faster than any exponential but not double exponential.

Next, we discuss the case of several generators.

\begin{theorem}[Joint spectral radius]\label{prop:jsr}
Let $\phi_1,\dots,\phi_N\in\mathcal{B}^0$ and 
$\lambda_j:=\phi_j'(0)$, and assume that $|\lambda_j|<1$ for all $1\le j\le N$.
Then the
joint spectral radius of $\{\mathcal{L}_1,\dots,\mathcal{L}_N\}$,
\[
\rho:=\lim_{n\to\infty}\ \max_{|a|=n}\ \|\mathcal{L}^{a}\|^{1/n}_{\mathcal{H}^{0}_{\theta^{-1}}\to\mathcal{H}^{0}_{\theta^{-1}}},
\]
equals
\[
\rho=\max_{1\le j\le N}|\lambda_j| .
\]
If moreover $\lambda_j=0$ for all $j$, so that $\rho=0$, then every
non-commutative polynomial in $\mathcal{L}_1,\dots,\mathcal{L}_N$ with zero
constant term is quasi-nilpotent; that is, $\mathcal{L}_1,\dots,\mathcal{L}_N$
generate a \textit{quasi-nilpotent non-unital algebra}.
\end{theorem}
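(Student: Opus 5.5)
The plan is to prove the two inequalities $\rho\le\max_j|\lambda_j|$ and $\rho\ge\max_j|\lambda_j|$ separately, and then deduce the algebra statement from the double exponential norm bound.

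\emph{Lower bound.} Fix $j$ with $|\lambda_j|=\max_m|\lambda_m|$ and take the words $a=j^n$. If $\lambda_j=0$ there is nothing to prove, so assume $\lambda_j\ne0$. Then $\mathcal{L}^{j^n}=\mathcal{L}_j^n$. By Theorem~\ref{thm:spectrum}, $\lambda_j$ lies in $\sigma(\mathcal{L}_j;\mathcal{H}^0_{\theta^{-1}})$, with eigenfunction the K\"onigs function $\chi$. Hence
\[
\|\mathcal{L}_j^n\|\ge\|\mathcal{L}_j^n\chi\|/\|\chi\|=|\lambda_j|^n,
\]
and therefore $\max_{|a|=n}\|\mathcal{L}^a\|^{1/n}\ge|\lambda_j|$. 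Alternatively, one can avoid the eigenfunction: the diagonal entry $\langle\mathcal{L}^{j^n}e_1,e_1\rangle=(\phi_j^n)'(0)=\lambda_j^n$ (as computed in Lemma~\ref{lower bounds on exponential mixing}) bounds the norm from below.

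\emph{Upper bound.} This is the main step, and the point is that Lemma~\ref{Schwarz lemma} gives a constant $\lambda(r)$ depending on $r$, not the sharp $\max|\lambda_j|$. I would refine it as follows. Fix $\epsilon>0$ and set $\Lambda=\max_j|\lambda_j|$. Since $\phi_m(z)=\lambda_m z+O(z^2)$, there is a small $\delta>0$ with $|\phi_m(z)|\le(\Lambda+\epsilon)|z|$ for all $m$ and all $|z|\le\delta$. Next fix $r\in(\theta,1)$. By Lemma~\ref{Schwarz lemma}, every word of length $n_0$ maps $\{|z|\le r\}$ into $\{|z|\le\lambda(r)^{n_0}r\}\subset\{|z|\le\delta\}$ once $n_0$ is large. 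From then on each letter contracts by the factor $\Lambda+\epsilon$, because the disk of radius $\delta$ is invariant under all $\phi_m$. So for $|a|=n\ge n_0$ and $|z|=r$,
\[
|\tau(a)z|\le\delta(\Lambda+\epsilon)^{n-n_0}=:\eta_n.
\]
I would then rerun the proof of Part~2 of Theorem~\ref{norm bound}, moving the contour to $|z|=r$, which gives coefficients bounded by $\eta_n^k r^{-j}$. The Hilbert--Schmidt sum becomes
\[
2\sum_{k>0}\theta^{-2k}\eta_n^{2k}\sum_{j\ne0}\theta^{2|j|}r^{-2j}.
\]
The $j$-series converges because $\theta<r$ and $\theta r<1$. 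The $k$-series is $O(\eta_n^2)$ once $\eta_n<\theta$. Hence $\|\mathcal{L}^a\|\le C_\epsilon(\Lambda+\epsilon)^{n}$ uniformly in $|a|=n$, so $\rho\le\Lambda+\epsilon$. Letting $\epsilon\to0$ gives $\rho\le\Lambda$. The limit defining $\rho$ exists by submultiplicativity (Fekete's lemma), so it suffices to bound the $\limsup$ and the $\liminf$.

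\emph{Algebra.} Now assume all $\lambda_j=0$, and let $P$ be a non-commutative polynomial with zero constant term. Write $P=\sum_{1\le|a|\le d}c_a\mathcal{L}^a$ on $\mathcal{H}^0_{\theta^{-1}}$, which is invariant under each $\mathcal{L}_j$. Expanding $P^n$ gives at most $K^n$ words, where $K$ is the number of monomials. Each word has length at least $n$ and coefficient of modulus at most $(\max|c_a|)^n$. By Part~1 of Theorem~\ref{norm bound}, each word has norm at most $Ce^{-\beta e^{\gamma n}}$. Therefore
\[
\|P^n\|\le C\,(K\max|c_a|)^n e^{-\beta e^{\gamma n}},
\]
and so $\|P^n\|^{1/n}\to0$, which is quasi-nilpotency. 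The only care needed is that the bound of Theorem~\ref{norm bound} is uniform over all words of a given length, which the statement provides, and that it is monotone in the length, which holds since $e^{-\beta e^{\gamma m}}$ decreases in $m$.
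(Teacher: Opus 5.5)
Your proof is correct, but for the identity $\rho=\max_j|\lambda_j|$ it takes a genuinely different route from the paper.

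\textbf{The paper's route.} The paper argues spectrally. Each $\tau(a)$ is again in $\mathcal{B}^0$ with multiplier $\prod_i\phi'_{a_i}(0)$, so Theorem~\ref{thm:spectrum} gives $r(\mathcal{L}^a)=|\tau(a)'(0)|$, and hence $\max_{|a|=n}r(\mathcal{L}^a)=(\max_j|\lambda_j|)^n$. Compactness of the $\mathcal{L}_j$ then lets the paper invoke the Berger--Wang formula, which identifies the joint spectral radius with $\limsup_n\max_{|a|=n}r(\mathcal{L}^a)^{1/n}$.

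\textbf{Your route.} You never pass through spectral radii; you bound the norms directly from both sides. From below you use the single matrix entry $\langle\mathcal{L}^{j^n}e_1,e_1\rangle=(\phi_j^n)'(0)=\lambda_j^n$, or alternatively the K\"onigs eigenfunction. From above you use a two-stage Schwarz-lemma contraction. First, the uniform $\lambda(r)$ of Lemma~\ref{Schwarz lemma} pushes $|z|\le r$ into a small disk $|z|\le\delta$. Then each further letter contracts by at most $\Lambda+\epsilon$, where $\Lambda=\max_j|\lambda_j|$. You feed this into the Hilbert--Schmidt estimate of Corollary~\ref{bounds using symmetry} after moving the contour to $|z|=r$.

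\textbf{Comparison.} The paper's argument is shorter. However, it depends on the full spectral description of every word operator and on a nontrivial theorem about families of compact operators. Yours is self-contained and also justifies that the limit defining $\rho$ exists (Fekete's lemma), which the paper takes for granted. It is also quantitative: for every $n$ it gives $\Lambda^n\le\max_{|a|=n}\|\mathcal{L}^a\|\le C_\epsilon(\Lambda+\epsilon)^n$. This sharpens the rate in Part~2 of Theorem~\ref{norm bound} to any $\kappa<-\log\Lambda$. When $\Lambda>0$, that matches the lower bound of Lemma~\ref{lower bounds on exponential mixing} applied to a generator with $|\lambda_j|=\Lambda$.

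\textbf{Minor point.} The $k$-series is uniformly $O(\eta_n^2)$ only once $\eta_n$ is bounded away from $\theta$, say $\eta_n\le\theta/2$. This is harmless, since $\eta_n\to0$.

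Your treatment of the non-unital quasi-nilpotent algebra is the same as the paper's.
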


\begin{proof}
For any word $a$, $\tau(a)$ is again a finite Blaschke product fixing the origin,
with \[\tau(a)'(0)=\prod_i\phi_{a_i}'(0)\] by the chain rule, so
Theorem~\ref{thm:spectrum} applied to $\tau(a)$ gives
\[r(\mathcal{L}^{a})=\bigl|\prod_i\phi_{a_i}'(0)\bigr|.\] Hence
\[
\max_{|a|=n}\ r(\mathcal{L}^{a})=\Bigl(\max_{1\le j\le N}|\lambda_j|\Bigr)^{n}
\qquad\text{for every } n\ge1 .
\]
Since the $\mathcal{L}_j$ are compact (the proof of Theorem~\ref{norm bound}
bounds their Hilbert--Schmidt norms), the Berger--Wang formula \cite{ST00}
applies and gives
\[
\rho=\limsup_{n\to\infty}\ \max_{|a|=n}\ r(\mathcal{L}^{a})^{1/n}
=\max_{1\le j\le N}|\lambda_j| .
\]

Suppose now $\lambda_j=0$ for all $j$. Then $\rho=0$, which also follows directly
from Theorem~\ref{norm bound}, whose bound is uniform over the words of a given
length, so
\[\max_{|a|=n}\|\mathcal{L}^{a}\|^{1/n}_{\mathcal{H}^{0}_{\theta^{-1}}\to\mathcal{H}^{0}_{\theta^{-1}}}\le C^{1/n}e^{-\beta e^{\gamma n}/n}\to0.\]
 If $P$ is a non-commutative polynomial in $\mathcal{L}_1,\dots,\mathcal{L}_N$
with zero constant term \[P=\sum_{i=1}^{m}c_i\mathcal{L}^{a_i} ,\qquad |a_i|\ge1,\] 
then $P^{n}$ is a sum of words of length at least $n$, so
\[
\|P^{n}\|_{\mathcal{H}^{0}_{\theta^{-1}}\to\mathcal{H}^{0}_{\theta^{-1}}}\le\left(\sum_{i=1}^{m}|c_i|\right)^{n}\sup_{|a|\ge n}\|\mathcal{L}^{a}\|_{\mathcal{H}^{0}_{\theta^{-1}}\to\mathcal{H}^{0}_{\theta^{-1}}}
\]
and \[r(P)=\lim_{n\to\infty}\|P^{n}\|^{1/n}_{\mathcal{H}^{0}_{\theta^{-1}}\to\mathcal{H}^{0}_{\theta^{-1}}}\le \sum_{i=1}^{m}|c_i|\lim_{n\to\infty} C^{1/n}\sup_{|a|\ge n} e^{-\frac{\beta}{n} e^{\gamma |a|}} =0.\]
\end{proof}

Note that, a priori, a composition of non-commuting quasi-nilpotent operators
does not have to be quasi-nilpotent: the matrices
$A=\begin{psmallmatrix}0&1\\0&0\end{psmallmatrix}$ and
$B=\begin{psmallmatrix}0&0\\1&0\end{psmallmatrix}$ are nilpotent, while already
the word $AB=\begin{psmallmatrix}1&0\\0&0\end{psmallmatrix}$ is a nonzero
idempotent. The last statement of Theorem~\ref{prop:jsr} is therefore
stronger than quasi-nilpotency of the individual generators, and does not follow
from it. In this way the actions considered here provide a naturally occurring
example of a quasi-nilpotent non-unital algebra of operators that need not
commute. 

It is natural to ask what happens when the generators commute. Then $\tau$
factors through $\mathbb{Z}_+^{N}$, and, apart from one degenerate case, the
action is forced to be affine.

\begin{prop}[Commuting generators]\label{prop:commuting}
Let $\phi_1,\dots,\phi_N\in\mathcal{B}^0$, $N\ge2$, be of degree at least two, and suppose that they commute pairwise. Then either every two generators have a common
iterate, $\phi_i^{\,p}=\phi_j^{\,q}$ for some $p,q\in\mathbb{N}$, or every generator
is affine,
\[
\phi_j(z)=e^{i\psi_j}z^{q_j},\qquad \psi_j\in[0,2\pi),\quad q_j\ge2 .
\]
\end{prop}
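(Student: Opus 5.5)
Since all the $\phi_j$ are polynomial-like rational maps of degree at least two that commute pairwise, I would invoke the classical Julia--Fatou--Ritt classification of commuting rational maps rather than argue from scratch. Each finite Blaschke product $\phi_j$ extends to a rational map of $\widehat{\mathbb{C}}$ of degree $q_j\ge2$, commuting with the others. By Ritt's theorem (in the form of Eremenko), if two rational maps of degree at least two commute, then either they have a common iterate, $\phi_i^{\,p}=\phi_j^{\,q}$, or they are both \emph{exceptional}: conjugate to power maps, Chebyshev polynomials, or Latt\`es maps. The plan is to rule out the Chebyshev and Latt\`es alternatives for Blaschke products in $\mathcal{B}^0$, and then to pin down the conjugacy in the power-map case.

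First, Julia sets. A Blaschke product in $\mathcal{B}^0$ of degree at least two has an attracting fixed point at $0$ by the Schwarz lemma, and by the reflection symmetry $\phi(1/\bar z)=1/\overline{\phi(z)}$ also at $\infty$. As it is uniformly expanding on $\mathbb{S}^1$ \cite{Mar83}, its Julia set is exactly $\mathbb{S}^1$. Latt\`es maps have Julia set equal to the whole sphere, so they are excluded. Chebyshev maps have Julia set a segment, and for a map conjugate to $\pm T_d$ by a M\"obius $h$ the Julia set is $h([-2,2])$, a circular arc, not a closed circle; this excludes them too. Hence, whenever two generators share no common iterate, both are conjugate to power maps $z\mapsto z^{d}$ by some M\"obius $h$.

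Second, I would identify the conjugacy. The power map $z^d$ has exactly two superattracting fixed points, $0$ and $\infty$, which are its only exceptional points (totally invariant, with full local degree). So $h$ sends $\{0,\infty\}$ to the exceptional set of $\phi_j$. Now $\phi_j^{-1}(0)$ is the zero set of $\phi_j$, which must equal $\{0\}$. Therefore $\phi_j$ has all its zeros at $0$, so $\phi_j(z)=e^{i\psi_j}z^{q_j}$, which is the affine model. It is cleaner to argue with the exceptional points: $0$ is a fixed point of $\phi_j$ inside $\mathbb{D}$, and the exceptional set $h(\{0,\infty\})$ consists of the two totally invariant points. These must be $0$ and $\infty$, because the Fatou set has exactly two components, $\mathbb{D}$ and its reflection, each containing one attracting fixed point.

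Finally, I would assemble the dichotomy as stated. If some pair $\phi_i,\phi_j$ has no common iterate, then both are affine. For any third generator $\phi_k$, either it is affine or it shares an iterate with $\phi_i$. In the latter case, $\phi_k^{\,r}=\phi_i^{\,s}=e^{i\psi}z^{m}$ is affine, so $\phi_k$ has totally invariant set $\{0,\infty\}$, since preimages of $0$ under $\phi_k$ lie among preimages under $\phi_k^{\,r}$. Thus $\phi_k$ is affine too. Otherwise every pair has a common iterate. The main obstacle is the citation step: stating Ritt's theorem precisely enough that ``both exceptional'' comes out for the non-common-iterate case, and carefully excluding the Chebyshev and Latt\`es alternatives via the Julia set $\mathbb{S}^1$.
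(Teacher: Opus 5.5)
Your argument is correct, but it takes a different route from the paper. The paper cites Chalendar--Mortini \cite{CM01}, a result specific to commuting Blaschke products with a common fixed point in $\mathbb{D}$. That result already gives the dichotomy ``common iterate, or both M\"obius-conjugate to monomials.'' The paper then only notes that, because $\phi_i(0)=0$, the conjugation is a rotation, and assembles the pairwise statements exactly as you do.

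You instead start from the general Ritt--Eremenko classification of commuting rational maps and do the Blaschke-specific reduction yourself. The Julia set $J(\phi_j)=\mathbb{S}^1$ rules out Latt\`es maps (Julia set the sphere) and Chebyshev maps (Julia set an arc). The two Fatou components $\mathbb{D}$ and $\widehat{\mathbb{C}}\setminus\overline{\mathbb{D}}$ then force the exceptional set $h(\{0,\infty\})$ to equal $\{0,\infty\}$, so all zeros of $\phi_j$ lie at the origin.

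The paper's route is shorter but relies on a more specialized reference. Yours rests on the classical theorem and makes visible why only monomials survive. It also justifies two points the paper passes over quickly. First, the conjugating M\"obius map must preserve $\{0,\infty\}$; it is not a priori a disk automorphism, so ``it fixes $0$, hence is a rotation'' needs an argument like yours. Second, $\phi_k^{\,r}$ affine forces $\phi_k$ affine, which your preimage-of-$0$ argument settles.

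Two small patches are needed. In Ritt's list the power case is $z\mapsto z^{\pm d}$, not only $z^{d}$. The case $z^{-d}$ is excluded because any conjugate of it interchanges the two complementary components of its Julia circle, whereas $\phi_j(\mathbb{D})\subseteq\mathbb{D}$. Also state explicitly that the Schwarz lemma makes $0$ the only fixed point (indeed the only periodic point) of $\phi_j$ in $\mathbb{D}$, and by reflection $\infty$ the only one outside $\overline{\mathbb{D}}$. Your phrase ``each containing one attracting fixed point'' depends on exactly this.
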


\begin{proof}
Fix $i\neq j$. Since $\phi_i$ and $\phi_j$ commute and $\phi_i(0)=0$ for all $1\le i\le N$, Chalendar and Mortini
\cite{CM01} prove that either
$\phi_i^{\,p}=\phi_j^{\,q}$ for some $p,q\in\mathbb{N}$ or $\phi_i$ and $\phi_j$ become affine after a M\"obius conjugation. Since $\phi_i(0)=0$, that conjugation is a rotation, and $\phi_i$ is itself
affine, $\phi_i(z)=e^{i\psi_i}z^{q_i}$. Thus each pair either has a common iterate or consists of two affine
maps.

Suppose now that some generator is affine, say $\phi_1$, and let $j\neq1$. If the
pair $(1,j)$ has a common iterate, $\phi_1^{\,p}=\phi_j^{\,q}$, then $\phi_j^{\,q}$
is affine and so is $\phi_j$.
In either case $\phi_j$ is affine. So one affine generator forces all of them, and
if none is affine, every pair has a common iterate.
\end{proof}

This is the one-dimensional form of a general
phenomenon. In \cite{SY17} Spatzier and Yang prove that any genuinely higher rank action of
an abelian semigroup by expanding maps of a compact manifold is $C^{\infty}$-conjugate to an affine action on an infra-nilmanifold.  

In the affine case, we have $\phi_j'(0)=0$ for every $1\le j\le N$, so by Theorem~\ref{prop:jsr} the
joint spectral radius is zero. In the remaining case, when every two generators
share an iterate, differentiating $\phi_i^{\,p}=\phi_j^{\,q}$ at the origin gives
$\lambda_i^{\,p}=\lambda_j^{\,q}$ for the multipliers $\lambda_j=\phi_j'(0)$. So
either they all vanish, and the joint spectral radius is zero again, or none of them
does, and the joint spectrum of the commuting tuple on $\mathcal{H}^{0}_{\theta^{-1}}$
is
\[
\bigl\{(\lambda_1^{m},\dots,\lambda_N^{m})\bigr\}_{m\ge1}\cup
\bigl\{(\overline{\lambda_1}^{\,m},\dots,\overline{\lambda_N}^{\,m})\bigr\}_{m\ge1}
\cup\{0\}\ \subseteq\ \mathbb{C}^{N},
\]
a discrete subset of the polydisk accumulating only at the origin.

%% file: harmonic_fixed.tex
\section{Blaschke Products with a Fixed Point in the Disk}\label{Other invariant measures}

Throughout, $\phi_1,\dots,\phi_N$ are finite Blaschke products with a common
fixed point $w\in\mathbb{D}$, and $\tau$ is the free semigroup action they generate. Put
\[
h(z):=\frac{z+w}{1+\overline{w}z},\qquad
\widetilde{\phi}_j:=h^{-1}\circ\phi_j\circ h,\qquad
\widetilde{\tau}(a):=h^{-1}\circ\tau(a)\circ h ,
\]
so that every $\widetilde\phi_j$ lies in $\mathcal{B}^0$ and
$\widetilde\tau(a)=\widetilde\phi_{a_{|a|}}\circ\dots\circ\widetilde\phi_{a_1}$:
one M\"obius map normalizes the whole action at once. Let $\nu_w$ be the harmonic
measure with pole at $w$; by Corollary~\ref{cor:harmonic measure preserving} it
is invariant under each $\phi_j$, hence under every $\tau(a)$, and $\nu_w=h_*\mu$.

\begin{proof}[Proof of Theorem~\ref{classification}, general case]
Write $\mathcal{C}^{\nu_w}_{f,g}$ for the correlation function computed with
respect to $\nu_w$. Since $\nu_w=h_*\mu$ and
$\tau(a)\circ h=h\circ\widetilde\tau(a)$,
\begin{equation}\label{eq:correlation transfer}
\mathcal{C}^{\nu_w}_{f,g}(a)=\mathcal{C}_{f\circ h,\;g\circ h}(a),
\end{equation}
the right-hand side being computed with respect to $\mu$ for the action
$\widetilde\tau$. Moreover a conformal conjugation preserves multipliers and
local degrees, so $\widetilde\phi_j'(0)=\phi_j'(w)$, and the local degree $p_j$
of $\phi_j$ at $w$ equals that of $\widetilde\phi_j$ at the origin; we put
$p:=\min_j p_j$.

If $|\phi_j'(w)|=1$ for some $j$, then $\phi_j$ is an elliptic automorphism and
$\widetilde\phi_j$ is a rotation, so $\widetilde\tau$, and hence $\tau$, is not
mixing; mixing is a measure-theoretic property, and $\nu_w=h_*\mu$. If
$|\phi_j'(w)|<1$ for all $j$, then $h$ is an analytic diffeomorphism of $\mathbb{S}^1$,
so $f\mapsto f\circ h$ preserves $C^{\alpha}$ with equivalent norms, and
exponential mixing of $\widetilde\tau$ on $C^{\alpha}$ gives that of $\tau$.

There remains the case $\phi_j'(w)=0$ for all $j$, that is $p\ge2$. The space $\mathcal{H}_\theta$
is not preserved by precomposition with $h$, since a function of
$\mathcal{H}_\theta$ is analytic on the annulus $\{\theta<|z|<1/\theta\}$,
whereas $f\circ h$ is analytic only on the preimage of that annulus under $h$,
containing a smaller annulus. We must therefore pass to a larger parameter $\theta'$.
Lemma~\ref{lm:conjugation on analytic spaces} below provides one, for every
$\theta$: it gives $\theta'$ and a constant $K=K(\theta,\theta',w)$, independent
of $f$, such that
\[
f\in\mathcal{H}_\theta\ \Longrightarrow\ f\circ h\in\mathcal{H}_{\theta'}
\quad\text{and}\quad \|f\circ h\|_{\theta'}\le K\|f\|_\theta.
\]
 In particular $f\circ h$ and $g\circ h$
belong to $\mathcal{H}_{\theta'}$, so the normalized case, proved in
Sections~\ref{proof} and \ref{one-more-proof}, applies to $\widetilde\tau$ at the
parameter $\theta'$ and gives, together with
\eqref{constants for double exponential mixing},
\begin{equation}\label{eq:harmonic bound}
\mathcal{C}^{\nu_w}_{f,g}(a)=\mathcal{C}_{f\circ h,\,g\circ h}(a)
\ \le\ C_{\theta'}\,\theta'^{\,p^{|a|}}\|f\circ h\|_{\theta'}\|g\circ h\|_{\theta'}
\ \le\ C_{\theta'}K^{2}\,\theta'^{\,p^{|a|}}\,\|f\|_\theta\|g\|_\theta
\end{equation}
for all $a\in\mathbb{F}_N^+$, that is, $\tau$ mixes at a double exponential rate
with respect to $\nu_w$, with $\gamma=\log p$ and $\beta=\log\frac1{\theta'}$.

Finally, mixing at a double exponential rate forces
$\phi_j'(w)=0$ for all $j$: this is
Lemma~\ref{lower bounds on exponential mixing} transferred to the non-normalized
case by the choice $f=g=h^{-1}$, for which $(f\circ h)(z)=(g\circ h)(z)=z$, so that
\eqref{eq:correlation transfer} turns the lower bound for $\widetilde\tau$ into
the same lower bound for $\tau$. These observables lie in
$\mathcal{H}_{\theta''}$ for every $|w|<\theta''<1$, and by
Lemma~\ref{lm:theta independence} one such parameter suffices.
\end{proof}

It remains to prove the norm comparison used above.

\begin{lemma}\label{lm:conjugation on analytic spaces}
Let $0<\theta<1$, $w\in\mathbb{D}$, and let $\theta'$ satisfy
\begin{equation}\label{eq:conjugation threshold}
\frac{\theta+|w|}{1+\theta|w|}\ <\ \theta'\ <\ 1 .
\end{equation}
Then there is $K=K(\theta,\theta',w)$ such that for any $f\in\mathcal{H}_\theta$
\begin{equation}\label{eq:norm comparison}
\|f\circ h\|_{\theta'}\ \le\ K\,\|f\|_{\theta} .
\end{equation}
\end{lemma}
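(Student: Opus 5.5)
The plan is to pass through sup-norm bounds on annuli. A function in $\mathcal{H}_\theta$ is holomorphic on $\{\theta<|z|<1/\theta\}$ with quantitative control there. Precomposition with $h$ moves this annulus to a slightly thinner one around $\mathbb{S}^1$. Cauchy estimates on that thinner annulus then return us to a weighted $\ell_2$ norm with the worse parameter $\theta'$.

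\emph{Step 1 (choice of an intermediate radius).} The map $\rho\mapsto \frac{\rho+|w|}{1+\rho|w|}$ is continuous and increasing on $[0,1)$. By \eqref{eq:conjugation threshold} we may therefore fix $\rho\in(\theta,1)$ with
\[
\rho':=\frac{\rho+|w|}{1+\rho|w|}<\theta' .
\]
Let $A_s:=\{s\le |z|\le 1/s\}$.

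\emph{Step 2 (sup bound for $f$ on $A_\rho$).} For $z\in A_\rho$ we have $|z|^{k}\le\rho^{-|k|}$ for every $k\in\mathbb{Z}$. Cauchy--Schwarz then gives
\[
|f(z)|\le\sum_k|f_k|\theta^{-|k|}\,\theta^{|k|}\rho^{-|k|}\le \|f\|_\theta\Bigl(\sum_k(\theta/\rho)^{2|k|}\Bigr)^{1/2}=:K_1\|f\|_\theta .
\]
The same estimate shows that the Laurent series converges absolutely and uniformly on $A_\rho$, so $f$ is holomorphic on a neighborhood of $A_\rho$.

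\emph{Step 3 (geometry of $h$).} We check that $h(A_{\rho'})\subset A_\rho$. This is the step I expect to need the most care, because of the part of $A_{\rho'}$ outside the disk and the pole of $h$.
\begin{itemize}
\item The pole of $h$ is at $-1/\overline{w}$, of modulus $1/|w|>1/\rho'$, so $h$ is holomorphic on a neighborhood of $A_{\rho'}$.
\item Inside the disk, suppose $|h(z)|<\rho$. Writing $z=h^{-1}(u)=\frac{u-w}{1-\overline{w}u}$ with $|u|<\rho$, the standard bound for disk automorphisms gives $|z|\le\frac{|u|+|w|}{1+|u||w|}<\rho'$. Hence $\rho'\le|z|\le1$ forces $\rho\le|h(z)|\le1$.
\item Outside the disk, use the reflection identity $h(1/\overline{z})=1/\overline{h(z)}$, valid since $h$ preserves $\mathbb{S}^1$. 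It shows that $1\le|z|\le1/\rho'$ forces $1\le|h(z)|\le1/\rho$.
\end{itemize}
Consequently $f\circ h$ is holomorphic near $A_{\rho'}$, and by Step 2
\[
\sup_{A_{\rho'}}|f\circ h|\le K_1\|f\|_\theta .
\]

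\emph{Step 4 (back to Fourier coefficients).} Integrate over the circles $|z|=\rho'$ and $|z|=1/\rho'$ via the Cauchy formula for Laurent coefficients. This gives
\[
|(f\circ h)_k|\le K_1\|f\|_\theta\,\rho'^{|k|}\qquad (k\in\mathbb{Z}).
\]
Therefore
\[
\|f\circ h\|_{\theta'}^2\le K_1^2\|f\|_\theta^2\sum_{k\in\mathbb{Z}}(\rho'/\theta')^{2|k|} .
\]
The series is finite because $\rho'<\theta'$ by Step 1. This yields \eqref{eq:norm comparison} with
\[
K=K_1\Bigl(\sum_k(\rho'/\theta')^{2|k|}\Bigr)^{1/2},
\]
which depends only on $\theta,\theta',w$ through the choice of $\rho$.
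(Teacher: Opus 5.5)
Your proof is correct, but it takes a genuinely different route from the paper's.

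\textbf{The paper's route.} The paper never passes through sup norms. It expands $f\circ h=\sum_k f_k\,h^k$ and computes the exact geometry $\max_{|z|=1/\theta'}|h|=\sigma$ and $\min_{|z|=\theta'}|h|=1/\sigma$, with $\sigma=\frac{1-|w|\theta'}{\theta'-|w|}$. For $k\ge0$ it uses that $h^k$ is holomorphic on $|z|\le 1/\theta'$, and for $k<0$ on $|z|\ge\theta'$. Parseval on a single circle then gives $\|h^k\|_{\theta'}\le\sigma^{|k|}$ directly. The triangle inequality and Cauchy--Schwarz finish the proof with $K=\bigl(\frac{1+(\theta\sigma)^2}{1-(\theta\sigma)^2}\bigr)^{1/2}$, and the series converges exactly because $\theta\sigma<1$, which is equivalent to the threshold condition.

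\textbf{Your route.} You treat $f$ as a holomorphic function on an annulus and prove the inclusion $h(A_{\rho'})\subset A_\rho$. This is the same geometric fact as the paper's, obtained instead from the growth bound for $h^{-1}$ together with the reflection identity. You then return to Fourier coefficients by Cauchy estimates.

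\textbf{What each buys.} The paper gets an explicit constant with no auxiliary radius. Parseval turns $\|h^k\|_{\theta'}$ into an integral with no summation loss. Your two conversions, sup from coefficients and coefficients from sup, each cost a geometric series, which forces you to back off from $\theta$ to $\rho$ and from $\theta'$ to $\rho'$. Your argument is softer and more general. It uses nothing about $h$ beyond holomorphy near $A_{\rho'}$ and the annulus inclusion, so it applies verbatim to precomposition by any analytic circle map that sends a thin annulus into a thicker one. It also avoids the step the paper leaves implicit, namely that $\sum_k f_k h^k$ converges to $f\circ h$ in $\mathcal{H}_{\theta'}$.
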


\begin{proof}
Put
\begin{equation}\label{eq:sigma}
\sigma:=\frac{1-|w|\theta'}{\theta'-|w|} .
\end{equation}
The interval in \eqref{eq:conjugation threshold} is nonempty for every
$\theta<1$ and every $w\in\mathbb{D}$, since $\theta+|w|<1+\theta|w|$ is equivalent to
$\theta(1-|w|)<1-|w|$. Note that \eqref{eq:conjugation threshold} implies $\theta'>|w|$, so $\sigma>0$ and $\theta\sigma<1$.

First, let us show that
\[
\min_{|z|=\theta'}|h(z)|=\frac{1}{\sigma} \quad\text{and}\quad \max_{|z|=1/\theta'}|h|=\sigma.
\]
After a
rotation we may assume
$w=|w|\ge0$, so that $h$ preserves the real axis. A M\"obius map takes circles to
circles, so $h$ takes $\{|z|=r\}$ to a circle symmetric about the real axis,
whose centre therefore lies on that axis; the extrema of $|h|$ on $\{|z|=r\}$ are
thus attained at $z=\pm r$. In particular, taking $r=\theta'>w$,
\[
\min_{|z|=\theta'}|h(z)|=|h(-\theta')|=\frac{\theta'-w}{1-w\theta'}=\frac{1}{\sigma},
\]
and the reflection identity $h(1/\overline{z})=1/\overline{h(z)}$, valid for
automorphisms of $\mathbb{D}$, gives $\max_{|z|=1/\theta'}|h|=\sigma$.

Next, we estimate norms of powers of $h$. Let $k\ge0$. The only pole of $h$ is at
$-1/\overline{w}$ and $1/\theta'<1/|w|$, so $(h(z))^{k}$ is analytic on
$|z|\le1/\theta'$; hence $j$-th Fourier coefficient of $(h(z))^{k}$ is zero for $j<0$, and Parseval on the
circle $|z|=1/\theta'$ gives
\[
\|(h(z))^{k}\|_{\theta'}^{2}=\sum_{j\ge0}\bigl|(h(z))^{k}_{j}\bigr|^{2}\Bigl(\frac{1}{\theta'}\Bigr)^{2j}
=\frac{1}{2\pi}\int_{0}^{2\pi}\bigl|h(\theta'^{-1}e^{i\vartheta})\bigr|^{2k}d\vartheta
\le\sigma^{2k} .
\]
For $k<0$ the function $(h(z))^{k}=(1/h(z))^{|k|}$ is analytic on $|z|\ge\theta'$,
including at infinity, since the only zero of $h$ is at $-w$; the same argument
on the circle $|z|=\theta'$ gives $\|(h(z))^{k}\|_{\theta'}\le\sigma^{|k|}$. Thus for all $k\in\mathbb{Z}$
\begin{equation}\label{eq:powers of h}
\|(h(z))^{k}\|_{\theta'}\le\sigma^{|k|}.
\end{equation}

Finally, we estimate the norms. Writing $f(z)=\sum_{k\in\mathbb{Z}}f_kz^{k}$, so that \[(f\circ h)(z)=\sum_{k\in\mathbb{Z}}f_k(h(z))^{k},\]
the triangle inequality, \eqref{eq:powers of h} and the Cauchy--Schwarz
inequality give
\[
\|f\circ h\|_{\theta'}\le\sum_{k\in\mathbb{Z}}|f_k|\,\|(h(z))^{k}\|_{\theta'}
\le\sum_{k\in\mathbb{Z}}\bigl(|f_k|\theta^{-|k|}\bigr)(\theta\sigma)^{|k|}
\le\sqrt{\sum_{k\in\mathbb{Z}}(\theta\sigma)^{2|k|}}\cdot\|f\|_\theta ,
\]
the last series converging precisely because $\theta\sigma<1$. %The same bound shows that $\sum_kf_kh^{k}$ converges absolutely in $\mathcal{H}_{\theta'}$, which justifies the first inequality.
Thus we get \eqref{eq:norm comparison} with 
\[
K=\sqrt{\sum_{k\in\mathbb{Z}}(\theta\sigma)^{2|k|}}
=\sqrt{\frac{1+(\theta\sigma)^{2}}{1-(\theta\sigma)^{2}}}.
\]
\end{proof}

The same conjugation transfers the remaining results of the normalized case.
Applying it to a generator of minimal local degree,
Theorem~\ref{thm:sharpness} shows that the exponent $\gamma=\log p$ in
\eqref{eq:harmonic bound} is optimal; and the rigidity statement of Corollary~\ref{cor:rigidity fixed degree} states that a single generator of degree $q$
whose exponent attains $\log q$ satisfies
$\phi=h\circ\bigl(e^{i\psi}z^{q}\bigr)\circ h^{-1}$, that is, it is M\"obius
conjugate to an affine model. The results of Section~\ref{spec-qnilp-res} transfer unchanged, since conjugation
by $h$ preserves the multiplier: the spectrum of $\mathcal{L}$ is
$\{0,1\}\cup\{\phi'(w)^{m},\overline{\phi'(w)}^{\,m}:m\ge1\}$, with
eigenfunctions the powers of the K\"onigs function of $\phi$ at $w$.

%% file: Invariance_of_Lebesgue.tex
\section{Invariance of the Harmonic Measure}\label{app:Lebesgue measure preserving}

\begin{lemma}\label{lm:Leb meas preserv}
    Let $\phi\in\mathcal{B}$ be a finite Blaschke product. Then $\phi$ preserves the Lebesgue measure $\mu$ on $\mathbb{S}^1$ if and only if
    \[
    \phi(0)=0.
    \]
\end{lemma}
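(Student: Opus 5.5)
The plan is to test invariance of $\mu$ against characters $z^k$, $k\in\mathbb{Z}$. Since trigonometric polynomials are dense in $C(\mathbb{S}^1)$, the measure $\mu$ is $\phi$-invariant if and only if
\[
\int_{\mathbb{S}^1} \phi(z)^k\, d\mu(z)=\int_{\mathbb{S}^1} z^k\, d\mu(z)=\delta_{k,0}\qquad\text{for all } k\in\mathbb{Z}.
\]
Because $|\phi|=1$ on $\mathbb{S}^1$, we have $\phi^{-k}=\overline{\phi^{k}}$ there. So the conditions for negative $k$ are the complex conjugates of those for positive $k$, and it suffices to treat $k\ge1$.

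For $k\ge1$, the function $\phi^k$ is analytic in $\mathbb{D}$ and continuous on $\overline{\mathbb{D}}$; in fact it is analytic on a neighbourhood of $\overline{\mathbb{D}}$, since the poles $1/\overline{\lambda_i}$ lie outside. By the mean value property (or the Cauchy formula with $d\mu=\frac{1}{2\pi i}\frac{dz}{z}$),
\[
\int_{\mathbb{S}^1}\phi^k\,d\mu=\phi(0)^k .
\]
Invariance is therefore equivalent to $\phi(0)^k=0$ for all $k\ge1$, which in turn is equivalent to $\phi(0)=0$.

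This argument gives both directions at once. If $\phi(0)=0$, all moments match and $\phi_*\mu=\mu$. Conversely, if $\phi_*\mu=\mu$, then taking $k=1$ gives $\phi(0)=\int z\,d\mu=0$.

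The one step needing care is the reduction from invariance to matching Fourier moments. I would justify it by noting that $\int f\circ\phi\,d\mu=\int f\,d\mu$ for all continuous $f$ characterizes $\phi_*\mu=\mu$, and that by uniform density of trigonometric polynomials (Stone--Weierstrass or Fejér) this reduces to $f=z^k$. Everything else is a direct application of the mean value property, so I do not expect a genuine obstacle.
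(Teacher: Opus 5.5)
Your proposal is correct and follows essentially the same route as the paper: reduce invariance to the Fourier moments $\int(\phi(z))^k\,d\mu=\delta_k$, evaluate the $k\ge 0$ moments as $(\phi(0))^k$ by the Cauchy (mean value) formula, and handle $k<0$ by complex conjugation using $|\phi|=1$ on $\mathbb{S}^1$. The only cosmetic difference is the justification of the reduction. You use uniform density of trigonometric polynomials in $C(\mathbb{S}^1)$, while the paper cites the fact that a finite Borel measure is determined by its Fourier coefficients; these amount to the same thing.
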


\begin{proof}
A finite Borel measure on $\mathbb{S}^1$ is determined by its Fourier coefficients (see \cite{Kat04}). Applying this to the measures $\phi_*\mu$ and $\mu$, and using \[\int_{\mathbb{S}^1}z^k\,d\mu(z)=\delta_k,\] we see that the measure-preserving property can be checked on the characters $z^k$. The map $\phi$ preserves $\mu$ if and only if
\begin{equation}\label{eq:Fourier criterion}
\int_{\mathbb{S}^1} (\phi (z))^k\, d\mu(z) = \delta_k:=
\begin{cases}
1, & k=0,\\
0, & k\neq 0,
\end{cases}
\qquad \text{for every }\quad k\in \mathbb{Z}.
\end{equation}

It remains to compute the left-hand side. For integer $k\ge 0$, we apply the Cauchy Theorem to $(\phi(z))^k$, a function that is analytic in the unit disk, and get
\begin{align*}
\int_{\mathbb{S}^1}(\phi(z))^k\,d\mu(z)&=\frac{1}{2\pi i}\int_{\mathbb{S}^1}(\phi(z))^k\,\frac{dz}{z}\\&=\left(\phi(0)\right)^k.
\end{align*}
For integer $k<0$, we use $|\phi(z)|=1$ for $|z|=1$ to get
\begin{align*}
\int_{\mathbb{S}^1}(\phi(z))^k\,d\mu(z)&=\int_{\mathbb{S}^1}(\phi(z))^{-|k|}\,d\mu(z)
\\&=\int_{\mathbb{S}^1}(\overline{\phi(z)})^{|k|}\,d\mu(z)\\&=\overline{\int_{\mathbb{S}^1}(\phi(z))^{|k|}\,d\mu(z)}
\\&=\left(\overline{\phi(0)}\right)^{|k|},
\end{align*}
where in the last line we used that $\mu$ is a positive real measure.

From here we see that \eqref{eq:Fourier criterion} holds if and only if $\left(\phi(0)\right)^k=0$ for every $k\ge1$, that is, if and only if $\phi(0)=0$. Thus, $\phi(0)=0$ is equivalent to $\phi$-invariance of $\mu$.
\end{proof}

The general case follows by a conformal change of variable.

\begin{corollary}\label{cor:harmonic measure preserving}
Let $w\in\mathbb{D}$ and let
\[
d\nu_w(z):=\frac{1-|w|^{2}}{|z-w|^{2}}\,d\mu(z)
\]
be the harmonic measure on $\mathbb{S}^1$ with pole at $w$. Let $\phi\in\mathcal{B}$
be a finite Blaschke product. Then $\phi$ preserves $\nu_w$ if and only if
\[
\phi(w)=w .
\]
\end{corollary}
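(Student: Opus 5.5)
The plan is to reduce the corollary to Lemma~\ref{lm:Leb meas preserv} by the same Möbius change of variable used in Section~\ref{Other invariant measures}. Put $h(z)=\frac{z+w}{1+\overline{w}z}$, an automorphism of $\mathbb{D}$ with $h(0)=w$ that maps $\mathbb{S}^1$ onto itself. The first step is to show $\nu_w=h_*\mu$. Substituting $z=h(\zeta)$ with $|\zeta|=1$, the density of $h_*\mu$ with respect to $\mu$ is $|(h^{-1})'(z)|$ on the circle. For $h^{-1}(z)=\frac{z-w}{1-\overline{w}z}$ one computes $(h^{-1})'(z)=\frac{1-|w|^2}{(1-\overline{w}z)^2}$, and $|1-\overline{w}z|=|z-w|$ when $|z|=1$. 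So the density is exactly $\frac{1-|w|^2}{|z-w|^2}$. The only care needed is that the Jacobian of a map of $\mathbb{S}^1$ in the angular coordinate is $|h'|$, since $d\mu=\frac{d\vartheta}{2\pi}$ and $|dz|=d\vartheta$.

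The second step is to conjugate: set $\widetilde\phi:=h^{-1}\circ\phi\circ h$. This is again a finite Blaschke product, being a composition of finite Blaschke products that is continuous on the closed disk and maps the circle to itself. Since $h_*\mu=\nu_w$, $\phi$ preserves $\nu_w$ if and only if $\widetilde\phi$ preserves $\mu$. Indeed,
\[
\widetilde\phi_*\mu=h^{-1}_*\phi_*h_*\mu=h^{-1}_*\phi_*\nu_w,
\]
and $h^{-1}_*$ is a bijection on measures sending $\nu_w$ to $\mu$.

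By Lemma~\ref{lm:Leb meas preserv}, $\widetilde\phi$ preserves $\mu$ if and only if $\widetilde\phi(0)=0$. This holds if and only if $\phi(h(0))=h(0)$, that is, $\phi(w)=w$, which completes the argument.

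The main obstacle is the first step, computing $h_*\mu=\nu_w$ correctly, including the direction of the pushforward. As a cross-check I would verify it on characters via the Poisson kernel: $\int z^k\,d\nu_w=w^k$ for $k\ge0$, which is the harmonic extension evaluated at $w$. Also $\int (h(\zeta))^k\,d\mu(\zeta)=h(0)^k=w^k$ by the mean value property. The case $k<0$ follows by conjugation, since both measures are real.
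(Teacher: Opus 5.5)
Your proposal is correct and follows essentially the same route as the paper: conjugate by $h(z)=\frac{z+w}{1+\overline{w}z}$, verify $h_*\mu=\nu_w$ by a change of variables, and then apply Lemma~\ref{lm:Leb meas preserv} to $\widetilde\phi=h^{-1}\circ\phi\circ h$, using that pushforward by $h$ is injective on measures. The only cosmetic difference is in the change of variables: you use the real Jacobian $|(h^{-1})'|$ together with $|1-\overline{w}z|=|z-w|$ on $\mathbb{S}^1$, whereas the paper works with the complex form $\frac{1}{2\pi i}\frac{dz}{z}$ and the identity $(z-w)(1-\overline{w}z)=z|z-w|^2$.
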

\begin{proof}
Let
\[
h(z):=\frac{z+w}{1+\overline{w}z},
\]
an automorphism of the unit disk with $h(0)=w$. 

Since $d\mu(\zeta)=\frac{1}{2\pi i}\frac{d\zeta}{\zeta}$, substituting
$\zeta=h^{-1}(z)$ and using
\[
h^{-1}(z)=\frac{z-w}{1-\overline{w}z},
\qquad
\left(h^{-1}\right)'(z)=\frac{1-|w|^{2}}{\left(1-\overline{w}z\right)^{2}},
\]
we get
\[
d(h_*\mu)(z)=d\mu\bigl(h^{-1}(z)\bigr)
=\frac{1}{2\pi i}\,\frac{\left(h^{-1}\right)'(z)}{h^{-1}(z)}\,dz
=\frac{1}{2\pi i}\,\frac{(1-|w|^{2})\,dz}{(z-w)(1-\overline{w}z)} .
\]
For $|z|=1$ we have 
\[(z-w)(1-\overline{w}z)=z\,|z-w|^{2},\] and therefore
\[
d(h_*\mu)(z)=\frac{1-|w|^{2}}{|z-w|^{2}}\cdot\frac{1}{2\pi i}\frac{dz}{z}=d\nu_w(z).
\]

Let $\widetilde\phi:=h^{-1}\circ\phi\circ h$, again a finite Blaschke
product, so
\[
\phi_*\nu_w=(\phi\circ h)_*\mu=(h\circ\widetilde\phi)_*\mu=h_*\bigl(\widetilde\phi_*\mu\bigr),
\]
and $h_*$ is injective, so $\phi_*\nu_w=\nu_w=h_*\mu$ if and only if
$\widetilde\phi_*\mu=\mu$. By Lemma~\ref{lm:Leb meas preserv} this
holds if and only if $\widetilde\phi(0)=0$, that is, $\phi(w)=w$.
\end{proof}

\begin{comment}
\begin{rem}\label{rem:inner functions}
Note that in the proof of Lemma~\ref{cor:volume preserving Blaschke} we only used that $T$ is analytic in the unit disk and $|T|=1$ on the unit circle, that is, that $T$ is an \textit{inner function}. For $k\ge0$ one replaces the Cauchy Theorem by the corresponding $H^1$ statement: the Fourier coefficients of the boundary function of $T^k\in H^\infty$ coincide with its Taylor coefficients, and the $0$-th one is $\left(T(0)\right)^k$. Therefore, an inner function preserves $\mu$ if and only if it fixes the origin.
\end{rem}
\end{comment}

%% file: some_theta_any_theta.tex
\section{Mixing on Analytic Functions}\label{some theta any theta}

Let $\tau$ be the action of the free semigroup $\mathbb{F}_N^+$ on $\mathbb{S}^1$
generated by $\phi_1,\dots,\phi_N\in\mathcal{B}$ with a common fixed point
$w\in\mathbb{D}$, let $\nu_w$ be the harmonic measure of
\eqref{eq:harmonic measure}, and let the correlation function
$\mathcal{C}_{f,g}$ be computed with respect to $\nu_w$. 

\begin{lemma}\label{lm:theta independence}
Suppose that for \textit{some}
$0<\theta_0<1$ there exist $C,\beta,\gamma>0$ such that for all $f,g\in\mathcal{H}_{\theta_0}$ and $a\in\mathbb{F}_N^+$
\[
\mathcal{C}_{f,g}(a)\le C e^{-\beta e^{\gamma|a|}}\|f\|_{\theta_0}\|g\|_{\theta_0}.
\]
Then for \textit{every} $0<\theta<1$ there exist $C_\theta,\beta_\theta>0$, with
the same $\gamma$, such that for all $f,g\in\mathcal{H}_{\theta}$ and $a\in\mathbb{F}_N^+$ 
\[
\mathcal{C}_{f,g}(a)\le C_\theta e^{-\beta_\theta e^{\gamma|a|}}\|f\|_{\theta}\|g\|_{\theta}.
\]
\end{lemma}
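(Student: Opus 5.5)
The plan is to treat the two directions $\theta\le\theta_0$ and $\theta>\theta_0$ separately. The first is a trivial inclusion. The second uses a frequency cutoff whose level grows double exponentially in $|a|$.

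\emph{Case $\theta\le\theta_0$.} Since $\theta^{-|k|}\ge\theta_0^{-|k|}$ for every $k$, we have $\mathcal{H}_\theta\subset\mathcal{H}_{\theta_0}$ with $\|f\|_{\theta_0}\le\|f\|_\theta$. The assumed bound then applies directly with $C_\theta=C$ and $\beta_\theta=\beta$.

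\emph{Case $\theta>\theta_0$.} This is the real content, since $\mathcal{H}_\theta$ is now the larger space. Write
\[
B(f,g):=\int_{\mathbb{S}^1}(f\circ\tau(a))\,\overline{g}\,d\nu_w-\int_{\mathbb{S}^1} f\,d\nu_w\int_{\mathbb{S}^1}\overline{g}\,d\nu_w ,
\]
so that $\mathcal{C}_{f,g}(a)=|B(f,g)|$ and $B$ is sesquilinear. Two crude facts will be used.

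1. Because $\nu_w$ is $\tau(a)$-invariant and its density is bounded above and below, $|B(f,g)|\le c_w\|f\|_{L_2(\mu)}\|g\|_{L_2(\mu)}$ by Cauchy--Schwarz, with $c_w$ independent of $a$.
2. For a cutoff $K\ge0$, write $f=f^{\le K}+f^{>K}$ according to whether $|k|\le K$ or $|k|>K$. Then
\[
\|f^{>K}\|_{L_2}\le\theta^{K}\|f\|_\theta,\qquad \|f^{\le K}\|_{\theta_0}\le(\theta/\theta_0)^{K}\|f\|_\theta .
\]

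Splitting both $f$ and $g$ in this way, $B(f,g)$ breaks into four terms. The low--low term is controlled by the hypothesis at $\theta_0$:
\[
|B(f^{\le K},g^{\le K})|\le C(\theta/\theta_0)^{2K}e^{-\beta e^{\gamma|a|}}\|f\|_\theta\|g\|_\theta .
\]
The three terms containing a high part are controlled by fact 1 together with $\|\cdot\|_{L_2}\le\|\cdot\|_\theta$, giving at most $3c_w\theta^{K}\|f\|_\theta\|g\|_\theta$.

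Now choose $K=\lfloor\varepsilon e^{\gamma|a|}\rfloor$ with $\varepsilon>0$ small enough that $\beta':=\beta-2\varepsilon\log(\theta/\theta_0)>0$. The low--low term is then at most $Ce^{-\beta' e^{\gamma|a|}}$. The remaining terms are at most $3c_w\theta^{-1}e^{-\varepsilon\log(1/\theta)\,e^{\gamma|a|}}$. Taking $\beta_\theta=\min(\beta',\varepsilon\log(1/\theta))$ and $C_\theta=C+3c_w\theta^{-1}$ gives the claim with the same $\gamma$.

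The only delicate point is balancing the cutoff. The loss $(\theta/\theta_0)^{2K}$ in the low part must not destroy the gain $e^{-\beta e^{\gamma|a|}}$, while the high part must still decay double exponentially. Taking $K$ proportional to $e^{\gamma|a|}$ with a small constant resolves both, and this is exactly why $\gamma$ is preserved while $\beta$ may shrink.
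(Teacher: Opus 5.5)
Your argument is correct and is essentially the paper's proof. You truncate at a frequency $K$ proportional to $e^{\gamma|a|}$, control the high-frequency pieces by $\tau(a)$-invariance in $L_2$ (gaining $\theta^{K}$), and control the low--low piece by the hypothesis at $\theta_0$ at the cost $(\theta/\theta_0)^{2K}$, choosing the proportionality constant small so that $\gamma$ survives and only $\beta$ shrinks. The one difference, a slight improvement, is that you split the full sesquilinear form $B$ (mean term included) rather than first reducing to zero-average $f$; this handles a general pole $w$ in one stroke and removes the need for the paper's separate correction for truncations whose $\nu_w$-average no longer vanishes.
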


\begin{proof}

We first give the argument in the normalized case $w=0$.

Let $f,g\in\mathcal{H}_\theta$ and assume without loss of generality that $f$ has
an average of zero.

Suppose first that $\theta\le\theta_0$. Then $\theta_0^{-2|k|}\le\theta^{-2|k|}$
for every $k$, so $\mathcal{H}_\theta\subseteq\mathcal{H}_{\theta_0}$ and
$\|f\|_{\theta_0}\le\|f\|_\theta$. The claim follows with
$C_\theta=C$ and $\beta_\theta=\beta$.

Suppose now that $\theta>\theta_0$. For $M\in\mathbb{N}$ denote by
$f_M:=\sum_{|k|\le M}f_kz^k$ the truncation of the Fourier series of $f$. %notethat $f_M$ again has an average of zero. 
We have
\begin{equation}\label{eq:truncation cost}
\|f_M\|_{\theta_0}^2=\sum_{|k|\le M}|f_k|^2\theta^{-2|k|}\Bigl(\tfrac{\theta}{\theta_0}\Bigr)^{2|k|}
\le \Bigl(\tfrac{\theta}{\theta_0}\Bigr)^{2M}\|f\|_\theta^2 ,
\end{equation}
since $\theta/\theta_0>1$, and
\begin{equation}\label{eq:truncation error}
\|f-f_M\|_{L_2}^2=\sum_{|k|>M}|f_k|^2\theta^{-2|k|}\theta^{2|k|}\le \theta^{2M}\|f\|_\theta^2 .
\end{equation}
Note also that $\|f\|_{L_2}\le\|f\|_\theta$, because $\theta^{-2|k|}\ge1$.

We split
\begin{equation}\label{eq:split theta}
\langle\mathcal{L}^a f,g\rangle_{L_2}=\langle\mathcal{L}^a f,g-g_M\rangle_{L_2}+\langle\mathcal{L}^a(f-f_M),g_M\rangle_{L_2}+\langle\mathcal{L}^a f_M,g_M\rangle_{L_2}.
\end{equation}
Since $\mathcal{L}^a$ is an isometry of $L_2(\mathbb{S}^1,\mu)$, the first two
terms are estimated by \eqref{eq:truncation error},
\[
|\langle\mathcal{L}^a f,g-g_M\rangle_{L_2}|\le\|f\|_{L_2}\|g-g_M\|_{L_2}\le\theta^{M}\|f\|_\theta\|g\|_\theta ,
\]
\[
|\langle\mathcal{L}^a(f-f_M),g_M\rangle_{L_2}|\le\|f-f_M\|_{L_2}\|g\|_{L_2}\le\theta^{M}\|f\|_\theta\|g\|_\theta .
\]
For the third term we use the hypothesis at $\theta_0$, which applies because
$f_M,g_M$ are trigonometric polynomials and $f_M$ has average zero, together
with \eqref{eq:truncation cost},
\[
|\langle\mathcal{L}^a f_M,g_M\rangle_{L_2}|\le Ce^{-\beta e^{\gamma|a|}}\|f_M\|_{\theta_0}\|g_M\|_{\theta_0}
\le Ce^{-\beta e^{\gamma|a|}}\Bigl(\tfrac{\theta}{\theta_0}\Bigr)^{2M}\|f\|_\theta\|g\|_\theta .
\]
Combining these with \eqref{eq:split theta}, we get for every $M$
\begin{equation}\label{eq:two terms theta}
\mathcal{C}_{f,g}(a)\le\Bigl(2\theta^{M}+Ce^{-\beta e^{\gamma|a|}}\bigl(\tfrac{\theta}{\theta_0}\bigr)^{2M}\Bigr)\|f\|_\theta\|g\|_\theta .
\end{equation}
We balance the two terms by choosing
\[
M(|a|):=\left\lceil K e^{\gamma|a|}\right\rceil,\qquad K=\frac{\beta}{4\log(\theta/\theta_0)}>0,
\]
so that $Ke^{\gamma|a|}\le M(|a|)\le Ke^{\gamma|a|}+1$. Then
\[
\Bigl(\tfrac{\theta}{\theta_0}\Bigr)^{2M(|a|)}\le\Bigl(\tfrac{\theta}{\theta_0}\Bigr)^{2}e^{2K\log(\theta/\theta_0)e^{\gamma|a|}}
=\Bigl(\tfrac{\theta}{\theta_0}\Bigr)^{2}e^{\frac{\beta}{2}e^{\gamma|a|}},
\]
so the second term in \eqref{eq:two terms theta} is at most
$C\bigl(\theta/\theta_0\bigr)^{2}e^{-\frac{\beta}{2}e^{\gamma|a|}}$, while the
first one is at most
\[
2\,\theta^{Ke^{\gamma|a|}}=2\,e^{-K\log\frac1\theta\,e^{\gamma|a|}} .
\]
Both decay double exponentially in $|a|$ with the same exponent $\gamma$, so
\[
\mathcal{C}_{f,g}(a)\le C_\theta e^{-\beta_\theta e^{\gamma|a|}}\|f\|_\theta\|g\|_\theta ,
\]
where
\[
C_\theta=2+C\Bigl(\tfrac{\theta}{\theta_0}\Bigr)^{2},
\qquad
\beta_\theta=\min\Bigl\{\tfrac{\beta}{2},\ \tfrac{\beta\log(1/\theta)}{4\log(\theta/\theta_0)}\Bigr\}>0 . 
\]

Now we discuss the changes needed for the general case $w\in\mathbb{D}$. The argument above used only that $\mathcal{L}^a$ is an isometry of the $L_2$ space of the
invariant measure, and that $\|\cdot\|_{L_2}\le\|\cdot\|_\theta$; the latter now
holds up to the factor
\[
D:=\sup_{\mathbb{S}^1}\left(\frac{d\nu_w}{d\mu}\right)^{1/2}
=\left(\frac{1+|w|}{1-|w|}\right)^{1/2} .
\] 
Two changes are needed, and both
affect only $C_\theta$: the $L_2(\nu_w)$ and $L_2(\mu)$ norms differ by the factor
$D$, and a truncation $f_M$ of a function with vanishing
$\nu_w$-average need no longer have vanishing $\nu_w$-average. The last term of \eqref{eq:split theta} is estimated by
\[\mathcal{C}_{f_M,g_M}(a)+\left|\int f_M\,d\nu_w\right|\left|\int g_M\,d\nu_w\right|,\]
Since $\int f\,d\nu_w=0$ by assumption, and since $\nu_w$ is a probability measure,
\[
\left|\int f_M\,d\nu_w\right|=\left|\int(f_M-f)\,d\nu_w\right|
\le\|f_M-f\|_{L_2(\nu_w)}\le D\|f_M-f\|_{L_2(\mu)}\le D\,\theta^{M}\|f\|_\theta
\]
by \eqref{eq:truncation error}, while the second factor is bounded
\[\left|\int g_M\,d\nu_w\right|\le\|g_M\|_{L_2(\nu_w)}\le D\|g\|_\theta.\] Thus
\[
\left|\int f_M\,d\nu_w\right|\left|\int g_M\,d\nu_w\right|\le D^{2}\theta^{M}\|f\|_\theta\|g\|_\theta ,
\]
which is of the same order as the first two terms of \eqref{eq:split theta}.
\end{proof}

The exponent $\gamma$ is the same for all $\theta$, while $\beta$ and $C$ are
not. Thus $\gamma$ is an invariant of the action alone, in accordance with
$\gamma=\log p$ of \eqref{constants for double exponential mixing}, where $p$ is
the minimal local degree of the generators at the origin.

%% file: Mixing_Hierarchy.tex
\section{Mixing Hierarchy}\label{hierarchy}

Let $\tau$ be the action of the free semigroup $\mathbb{F}_N^+$ on $\mathbb{S}^1$ generated by $\phi_1,\dots,\phi_N\in\mathcal{B}^0$, as in Section~\ref{ch1:results}. Since each $\phi_j$ preserves the Lebesgue measure $\mu$, every operator $\mathcal{L}^a$, $a\in\mathbb{F}_N^+$, is an isometry of $L_2(\mathbb{S}^1,\mu)$.

\begin{lemma}[Mixing Hierarchy]\label{Lm: mixing hierarchy}
If $\tau$ is mixing at a double exponential rate on $\mathcal{H}_\theta$ for some $0<\theta<1$, then $\tau$ is exponentially mixing on $C^{\alpha}$ for every $0<\alpha\le1$.
\end{lemma}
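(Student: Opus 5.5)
The plan is to approximate Hölder observables by trigonometric polynomials. The polynomials lie in $\mathcal{H}_\theta$, so the double exponential bound applies to them. The approximation error is controlled in $L_2$, using that every $\mathcal{L}^a$ is an isometry of $L_2(\mathbb{S}^1,\mu)$. This is the same balancing argument as in the proof of Lemma~\ref{lm:theta independence}, except that the truncation error now decays only polynomially in the truncation level $M$ rather than like $\theta^M$.

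\textbf{Step 1: approximation.} Fix $f,g\in C^\alpha$ and let $M\in\mathbb{N}$. I would take $f_M,g_M$ to be Jackson-type approximants: de la Vallée Poussin or Jackson kernel means of degree at most $M$, rather than Fejér means, to avoid a $\log M$ loss at $\alpha=1$. Jackson's theorem gives
\[
\|f-f_M\|_{L_2}\le\|f-f_M\|_{\infty}\le A\,M^{-\alpha}\|f\|_{C^\alpha}, \qquad \|f_M\|_{\infty}\le A\|f\|_{\infty},
\]
and the same bounds hold for $g$. Since $f_M$ has frequencies $|k|\le M$,
\[
\|f_M\|_\theta\le\theta^{-M}\|f_M\|_{L_2}\le A\,\theta^{-M}\|f\|_{C^\alpha}.
\]
These convolution operators preserve the Fourier coefficient $f_0$, so $\int f_M\,d\mu=\int f\,d\mu$. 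Subtracting means, we may therefore assume $\int f\,d\mu=0=\int f_M\,d\mu$, so that $\mathcal{C}_{f,g}(a)=|\langle\mathcal{L}^af,g\rangle_{L_2}|$.

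\textbf{Step 2: splitting.} As in \eqref{eq:split theta}, write
\[
\langle\mathcal{L}^af,g\rangle=\langle\mathcal{L}^af,g-g_M\rangle+\langle\mathcal{L}^a(f-f_M),g_M\rangle+\langle\mathcal{L}^af_M,g_M\rangle.
\]
By the $L_2$ isometry, the first two terms are bounded by $2A^2M^{-\alpha}\|f\|_{C^\alpha}\|g\|_{C^\alpha}$. For the third term I would use the hypothesis, valid by Lemma~\ref{lm:theta independence} for the given $\theta$, to get the bound
\[
Ce^{-\beta e^{\gamma|a|}}\theta^{-2M}A^2\|f\|_{C^\alpha}\|g\|_{C^\alpha}.
\]

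\textbf{Step 3: choice of $M$.} Take
\[
M=\Bigl\lceil \tfrac{\beta}{4\log(1/\theta)}\,e^{\gamma|a|}\Bigr\rceil .
\]
Then $\theta^{-2M}\le\theta^{-2}e^{\frac{\beta}{2}e^{\gamma|a|}}$, so the third term is at most $C\theta^{-2}A^2e^{-\frac{\beta}{2}e^{\gamma|a|}}\|f\|_{C^\alpha}\|g\|_{C^\alpha}$. The first two terms are at most $2A^2\bigl(\tfrac{4\log(1/\theta)}{\beta}\bigr)^{\alpha}e^{-\alpha\gamma|a|}\|f\|_{C^\alpha}\|g\|_{C^\alpha}$. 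The double exponential term is dominated by a constant times $e^{-\alpha\gamma|a|}$, which gives exponential mixing on $C^\alpha$ with $\kappa=\alpha\gamma$.

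The main obstacle is Step 1, namely obtaining a sup-norm approximation rate $M^{-\alpha}$ by degree-$M$ polynomials whose $\mathcal{H}_\theta$-norm is only $\theta^{-M}$ times the sup norm. Jackson's theorem together with the trivial bound on $\|\cdot\|_\theta$ for polynomials of degree $M$ handles this. The rest is bookkeeping identical to Appendix~\ref{some theta any theta}.
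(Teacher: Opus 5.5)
Your proposal is correct and follows essentially the same argument as the paper: approximate by trigonometric polynomials, split the correlation into three terms controlled by the $L_2$ isometry and the double exponential hypothesis, and balance with $M=\lceil \frac{\beta}{4\log(1/\theta)}e^{\gamma|a|}\rceil$ to obtain the rate $\alpha\gamma$. The only difference is that the paper uses Fej\'er means for $0<\alpha<1$ and deduces the case $\alpha=1$ from the inclusion $C^1\subset C^\alpha$, which gives every rate below $\gamma$; your Jackson-type kernels handle $\alpha=1$ directly and give the rate $\gamma$ itself.
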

Note that the converse does not hold, as shown by examples in Section~\ref{examples}.
In the proof, we use an approximation argument similar to Lemma~2.3 in \cite{GS14}. 

\begin{proof} Assume first that $0<\alpha<1$. Let $f,g\in C^\alpha$, and assume without loss of generality that $f$ has an average of zero.
Denote by $\sigma_M f$ the $M$-th Fej\'er mean of the Fourier series of $f$. There is $C'=C'(\alpha)>0$ such that for any $f\in C^\alpha$ and $f_M=\sigma_M f$,
\[
\| f_M - f \|_{\infty} \;\le\; C'\, M^{-\alpha}\|f\|_{C^\alpha}
\]
(see \cite[Ch.~I, \S3, Exercise~2]{Kat04}). Note that $\sigma_M$ does not change the zeroth Fourier coefficient, so $f_M$ also has average zero.
We have
\begin{equation}\label{all together}
\langle\mathcal{L}^a f,g\rangle_{L_2}=\langle\mathcal{L}^a f,g-g_M\rangle_{L_2}+\langle\mathcal{L}^a(f-f_M),g_M\rangle_{L_2}+\langle\mathcal{L}^a f_M,g_M\rangle_{L_2}.
\end{equation}
Now, being finite linear combinations of basis vectors, $f_M$ and $g_M$ belong to any $\mathcal{H}_\theta$,
so there are $C>0$, $\beta>0$, and $\gamma>0$ such that
\begin{align}\label{first one}
|\langle\mathcal{L}^a f_M,g_M\rangle_{L_2}|&\le Ce^{-\beta e^{\gamma |a|}}\|f_M\|_\theta\|g_M\|_\theta
\nonumber \\ &\le Ce^{-\beta e^{\gamma
|a|}}\theta^{-2M}\|f_M\|_{L_2}\|g_M\|_{L_2}
\nonumber\\
&\le Ce^{-\beta e^{\gamma |a|}}\theta^{-2M}\|f\|_{L_2}\|g\|_{L_2}
\nonumber\\
&\le Ce^{-\beta e^{\gamma |a|}}\theta^{-2M}\|f\|_{C^\alpha}\|g\|_{C^\alpha}.
\end{align}
Next, using that $\mathcal{L}^a$ is an $L_2$-isometry,
\begin{equation}\label{second one}
|\langle\mathcal{L}^a f,g-g_M\rangle_{L_2}|\le \|g-g_M\|_{\infty}\|\mathcal{L}^af\|_{L_2}\le C' M^{-\alpha}\|f\|_{C^\alpha}\|g\|_{C^\alpha},
\end{equation}
and similarly,
\begin{equation}\label{third one}
|\langle\mathcal{L}^a(f-f_M),g_M\rangle_{L_2}|\le C'  M^{-\alpha}\|f\|_{C^\alpha}\|g\|_{C^\alpha}.
\end{equation}
Combining the estimates \eqref{all together}--\eqref{third one}, we get
\begin{equation}\label{final}
    |\langle\mathcal{L}^a f, g\rangle_{L_2}| \le \left(2C'M^{-\alpha}+Ce^{-\beta e^{\gamma |a|}}\theta^{-2M}\right)\|f\|_{C^\alpha}\|g\|_{C^\alpha}.
\end{equation}
There are two competing terms that we need to balance by choosing $M$ appropriately as a function of $|a|$
\[
M(|a|):=\left\lceil K e^{\gamma |a|} \right\rceil,\quad  K=\frac{\beta}{4\log (1/\theta)}>0,
\]
so that
\begin{equation}\label{M bounds}
K e^{\gamma |a|}\;\le\; M(|a|)\;\le\; K e^{\gamma |a|}+1 .
\end{equation}
Using the upper bound in \eqref{M bounds} and $2K\log(1/\theta)=\beta/2$, we get
\begin{equation}\label{choice of M}
e^{-\beta e^{\gamma |a|}}\theta^{-2M(|a|)}\le e^{-\beta e^{\gamma |a|}}e^{\frac{\beta}{2} e^{\gamma |a|}}\theta^{-2} = \theta^{-2} e^{-\frac{\beta}{2} e^{\gamma |a|}}.
\end{equation}
With this choice of $M$, the second term in \eqref{final} decays double exponentially in $|a|$, while the first term decays only exponentially. Hence, we can find $C''>0$ such that, using the lower bound in \eqref{M bounds},
\begin{equation}\label{choice of C''}
 C\theta^{-2}e^{-\frac{\beta}{2} e^{\gamma |a|}}\le  C'' M(|a|)^{-\alpha} \le C'' \left( K e^{\gamma |a|}\right)^{-\alpha}
\end{equation}
for all $|a|\ge 0.$

Combining the estimates \eqref{final}--\eqref{choice of C''}, we obtain
\[
|\langle\mathcal{L}^a f, g\rangle_{L_2}| \le \hat{C}e^{-b |a|}\|f\|_{C^\alpha}\|g\|_{C^\alpha},
\]
where $\hat{C}=(C''+2C')K^{-\alpha}$ and $b=\alpha\gamma>0$.

It remains to treat $\alpha=1$. On $\mathbb{S}^1$ we have $C^{1}\subset C^{\alpha}$ for every $0<\alpha<1$, with $\|f\|_{C^{\alpha}}\le\|f\|_{C^{1}}$, so the case already proved applies to any $f,g\in C^1$ and gives
\[
|\langle\mathcal{L}^a f, g\rangle_{L_2}| \le \hat{C}e^{-\alpha\gamma |a|}\|f\|_{C^1}\|g\|_{C^1}
\]
for every $0<\alpha<1$.
\end{proof}

%% file: no_uniform_mixing_L_2.tex
\section{Mixing on \texorpdfstring{$L_2$}{L2}}\label{no-uniform-mixing}

\begin{lemma}[Isometries do not mix uniformly]\label{lm:isometry no decay}
Let $H\neq\{0\}$ be a Hilbert space and let $\mathcal{L}:H\to H$ be an isometry.
Then no sequence $\epsilon_n\to0$ can satisfy
\[
|\langle \mathcal{L}^{n}f,g\rangle|\le\epsilon_n\|f\|\,\|g\|
\qquad\text{for all } f,g\in H \text{ and } n\ge0 .
\]
\end{lemma}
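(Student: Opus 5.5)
The plan is to argue by contradiction and produce, for each $n$, a single unit vector $f$ whose correlation with its own image under $\mathcal{L}^n$ is large. Suppose such a sequence $\epsilon_n\to0$ exists. Since $\mathcal{L}$ is an isometry, so is every power $\mathcal{L}^n$. Hence $\|\mathcal{L}^n f\|=\|f\|$ for every $f\in H$.

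Fix any $f\neq0$ in $H$ (possible since $H\neq\{0\}$) and take $g:=\mathcal{L}^n f$. Then
\[
|\langle\mathcal{L}^n f,g\rangle|=\|\mathcal{L}^n f\|^2=\|f\|^2=\|f\|\,\|\mathcal{L}^n f\|=\|f\|\,\|g\|.
\]
The assumed inequality would therefore force $\|f\|\,\|g\|\le\epsilon_n\|f\|\,\|g\|$. Since $\|f\|\,\|g\|=\|f\|^2>0$, this gives $\epsilon_n\ge1$ for every $n\ge0$. This contradicts $\epsilon_n\to0$.

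A stronger form is also worth noting. The same computation shows that $\sup_{\|f\|=\|g\|=1}|\langle\mathcal{L}^n f,g\rangle|=1$ for all $n$. In other words, the operator norm of $\mathcal{L}^n$ equals $1$, so the best uniform bound is $\epsilon_n\equiv1$.

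There is no real obstacle here. The only point to watch is that $g$ is allowed to depend on $n$; the statement quantifies over all $f,g$ for each fixed $n$, so this is legitimate. For the application in Corollary~\ref{no-uniform-mix}, I would then combine the lemma with the fact that each $\mathcal{L}^a$ is an $L_2(\mathbb{S}^1,\nu_w)$-isometry, and also an isometry on the invariant subspace orthogonal to constants, where the correlation function coincides with $\langle\mathcal{L}^a f,g\rangle$. Taking $f$ of zero mean then gives $g=\mathcal{L}^a f$ of zero mean as well, so the subtraction of means does not interfere.
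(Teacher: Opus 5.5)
Your proof is correct and is essentially the paper's argument. The paper bounds $\|\mathcal{L}^n\|$ by $\epsilon_n$ through the supremum formula for the operator norm and contradicts $\|\mathcal{L}^n\|=1$; you exhibit the extremal pair $g=\mathcal{L}^n f$ that realizes that supremum, which is the same step made explicit.
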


\begin{proof}
We prove it by contradiction. Assume that there is a sequence $\epsilon_n\to0$
such that
\[
|\langle \mathcal{L}^{n}f,g\rangle|\le\epsilon_n\|f\|\,\|g\|
\]
for any $f,g\in H$ and $n\ge0$. Then for any $n\ge0$
\begin{equation}\label{L2 decay}
\|\mathcal{L}^{n}\|_{H\to H}
=\sup\bigl\{|\langle \mathcal{L}^{n}f,g\rangle|\ :\ f,g\in H,\ \|f\|=\|g\|=1\bigr\}
\le\epsilon_n\to0 .
\end{equation}
On the other hand, let us notice that if $\mathcal{L}$ is an isometry, then
$\mathcal{L}^{n}$ is an isometry for any $n\ge0$,
\[
\|\mathcal{L}^{n}f\|=\|\mathcal{L}(\mathcal{L}^{n-1}f)\|=\|\mathcal{L}^{n-1}f\|=\dots=\|f\| .
\]
So $\|\mathcal{L}^{n}\|_{H\to H}=1$ for any $n\ge0$, contradicting
\eqref{L2 decay}.
\end{proof}

\begin{corollary}[No uniform quantitative mixing on \texorpdfstring{$L_2$}{L2}]\label{no-uniform-mix}
Let $T$ be a transformation of a probability space $(X,\mu)$ preserving $\mu$,
and for $f,g\in L_2(X,\mu)$ put
\[
\mathcal{C}_{f,g}(n):=\left|\int_X\left(f\circ T^{n}\right)\overline{g}\,d\mu
-\int_X f\,d\mu\int_X\overline{g}\,d\mu\right| .
\]
Then no sequence $\epsilon_n\to0$ can satisfy
\[
\mathcal{C}_{f,g}(n)\le\epsilon_n\|f\|_{L_2}\|g\|_{L_2}
\qquad\text{for all } f,g\in L_2(X,\mu)\text{ and } n\ge0 .
\]
\end{corollary}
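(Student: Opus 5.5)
The plan is to deduce the corollary from Lemma~\ref{lm:isometry no decay}, applied to the Koopman operator $\mathcal{L}f=f\circ T$ on the orthogonal complement of the constants, $H:=L_2^0(X,\mu)=L_2(X,\mu)\ominus\mathbb{C}$. Two preliminary facts are needed. First, since $T$ preserves $\mu$, we have $\|f\circ T\|_{L_2}=\|f\|_{L_2}$, so $\mathcal{L}$ is an isometry of $L_2(X,\mu)$. Second, $\int (f\circ T)\,d\mu=\int f\,d\mu$, so $\mathcal{L}$ maps $H$ into itself, and its restriction to $H$ is still an isometry.

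Next, I would argue by contradiction. Suppose $\epsilon_n\to0$ satisfies $\mathcal{C}_{f,g}(n)\le\epsilon_n\|f\|_{L_2}\|g\|_{L_2}$ for all $f,g$. Restrict to $f,g\in H$. Then the subtracted product of means vanishes, and $\mathcal{C}_{f,g}(n)=|\langle\mathcal{L}^nf,g\rangle_{L_2}|$. The hypothesis of Lemma~\ref{lm:isometry no decay} therefore holds on $H$ with the same sequence $\epsilon_n$. That lemma gives the desired contradiction, provided $H\neq\{0\}$.

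The only real obstacle is this nondegeneracy condition. If $L_2(X,\mu)$ consists only of constants (for instance, when $X$ is a point or the measure is trivial), then $H=\{0\}$. In that case every correlation is $0$, and the statement would fail. I would handle this by noting that the paper's setting ($X=\mathbb{S}^1$ with $\mu$ or $\nu_w$) has $L_2$ infinite-dimensional, and by making the implicit hypothesis $\dim L_2(X,\mu)\ge2$ explicit. With that hypothesis $H\neq\{0\}$, and the proof is complete.
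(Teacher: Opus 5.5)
Your proposal is correct and follows the paper's proof: restrict the Koopman operator to $L_2^0(X,\mu)$, where it is an isometry and $\mathcal{C}_{f,g}(n)=|\langle\mathcal{L}^n f,g\rangle_{L_2}|$, then invoke Lemma~\ref{lm:isometry no decay}. Your caveat is also right, and the paper's proof passes over it: if $L_2(X,\mu)$ contains only constants, then $L_2^0(X,\mu)=\{0\}$, every correlation vanishes, and the statement is false, so the hypothesis $L_2^0(X,\mu)\neq\{0\}$ is genuinely needed (it holds automatically for $\mathbb{S}^1$ with $\mu$ or $\nu_w$).
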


\begin{proof}
Let $\mathcal{L}f=f\circ T$ be the induced Koopman precomposition operator. Since
$T$ preserves $\mu$, we have $\|\mathcal{L}f\|_{L_2}=\|f\|_{L_2}$ and
$\int_X\mathcal{L}f\,d\mu=\int_Xf\,d\mu$, so $\mathcal{L}$ maps
$L^{0}_2(X,\mu)$ into itself and is an isometry of that space. For
$f,g\in L^{0}_2(X,\mu)$ both averages vanish and
$\mathcal{C}_{f,g}(n)=|\langle\mathcal{L}^{n}f,g\rangle_{L_2}|$. Now apply
Lemma~\ref{lm:isometry no decay} with $H=L^{0}_2(X,\mu)$.
\end{proof}

Note that this does not exclude mixing: no assumption on $T$ beyond
$\mu$-invariance is made, and for a mixing $T$ the correlations
$\mathcal{C}_{f,g}(n)$ of each fixed pair $f,g$ do tend to zero. What the
corollary excludes is a rate valid for all pairs at once. 
%, and suppose that\[L_2^0(X,\mu):=\left\{f\in L_2(X,\mu):\int_X f\,d\mu=0\right\}\neq\{0\}. it's not one point